\def\ps@pprintTitle{%
  \let\@oddhead\@empty
  \let\@evenhead\@empty
  \let\@oddfoot\@empty
  \let\@evenfoot\@oddfoot
}
\crefname{thm}{Theorem}{Theorems}
\crefname{app}{Appendix}{Appendices}
\crefname{prop}{Proposition}{Propositions}
\crefname{lem}{Lemma}{Lemmas}
\renewcommand{\a}{\boldsymbol \alpha }
\newcommand{\N}{\mathbb{N}}
\newcommand{\p}{\partial}
\newcommand{\R}{\mathbb{R}}
\newcommand{\x}{\mathbf{x}}
\newcommand{\y}{\mathbf y}
\newcommand{\norm}[1]{\left\lVert#1\right\rVert}
\newcommand{\tq}{\tilde q}
\newcommand{\ds}{\displaystyle}
\newcommand{\mb}{\overline{ m}}
\newcommand{\xb}{\overline{ \x}}
\newcommand{\xxb}{\overline{x}_1}
\newcommand{\Oc}{\mathcal  O}
\def\Ui{U}
\def\mutx{\mathcal{M}^*}
\def\mutxa{\mathcal{M}}
\newcommand{\nub}{\vec{\nu}}
\definecolor{aquamarine}{rgb}{0.13, 0.68, 0.8} 
\def\lp {\left( }
\def\rp {\right) }
\def\Rd{\color{black}}
\def\Bk{\color{black}}
\newcommand{\baco}{\left\{ \begin{array}}
	\newcommand{\eaco}{\end{array} \right.}
\newtheorem{theorem}{Theorem}[section]
\newtheorem{proposition}[theorem]{Proposition}
\newtheorem{remark}[theorem]{Remark}
\theoremstyle{definition}
\newtheorem{lem}[theorem]{Lemma}
\begin{document}

\begin{frontmatter}

\title{The emergence of a birth-dependent mutation rate in asexuals: causes and consequences}


\author{F. Patout$^{\hbox{a}}$,
 R. Forien$^{\hbox{\small{ a}}}$, M. Alfaro$^{\hbox{\small{ a},\small{ b}}}$, J. Papa\"ix$^{\hbox{\small{ a}}}$ and L. Roques$^{\hbox{\small{ a,*}}}$}
 
\address{$^{\hbox{a }}$ INRAE, BioSP, 84914, Avignon, France}

\address{$^{\hbox{b }}$ Universit\'e de Rouen Normandie, CNRS, Laboratoire de Math\'ematiques Rapha\"el Salem, Saint-Etienne-du-Rouvray, France}

\address{$*$ \textit{corresponding author: }lionel.roques@inrae.fr}


\begin{abstract}
	 In unicellular organisms such as bacteria and in most viruses, mutations mainly occur during reproduction. Thus, genotypes with a high birth rate should have a higher mutation rate. However, standard models of asexual adaptation such as the `replicator-mutator equation' often neglect this  generation-time effect.  In this study, we investigate the emergence of a positive dependence between the birth rate and the mutation rate in models of asexual adaptation and the consequences of this dependence.
We show that it emerges naturally at the population scale, based on a large population limit of a stochastic time-continuous individual-based model 
with elementary assumptions. We derive a reaction-diffusion framework that describes the evolutionary trajectories and steady states in the presence of this dependence. 
When this model is coupled with a phenotype to fitness landscape with two optima, one for birth, the other one for survival, a new trade-off arises in the population. Compared to the standard approach with a constant mutation rate, the symmetry between birth and survival is broken. Our analytical results and numerical simulations show that the trajectories of mean phenotype, mean fitness and the stationary phenotype distribution are in sharp contrast with those displayed for the standard model.  The reason for this is that the usual weak selection limit does not hold in a complex landscape with several optima associated with different values of the birth rate. 
Here, we obtain trajectories of adaptation where the mean phenotype of the population is initially attracted by the birth optimum, but eventually converges to the survival optimum, following a hook-shaped curve which illustrates the antagonistic effects of mutation on adaptation. 
	\end{abstract}

\begin{keyword}
Generation-time effect; PDE models; Stochastic models; Evolutionary trade-off;  Fertility; Survival 
\end{keyword}

\end{frontmatter}

	\section{Introduction}
The effect of the mutation rate on the dynamics of adaptation is well-documented, both experimentally \citep[e.g.,][]{GirMat01,AndDai04} and theoretically. Regarding  theoretical work, since the first studies  on the accumulation of mutation load  \citep{Hal37,KimMar66}, several modelling approaches have investigated the effect of the mutation rate on various aspects of the adaptation of asexuals. This includes lethal mutagenesis theory \citep{BulSan07,BulWil08}, where too high mutation rates may lead to extinction, evolutionary rescue \citep{AncLam19} or the invasion of a sink \citep{LavMar20}. The evolution of the mutation rate per se is also the subject of several models \citep{andre2006evolution,Lyn10}.

The fact that mutation rates per unit time should be higher in species with a shorter generation, given a fixed mutation rate per generation, is called the \textit{generation-time effect}, and has been discussed by \cite{Gil91}. The within-species consequences of the generation-time effect have attracted less attention.  For unicellular organisms such as bacteria, mutations occur during reproduction by means of binary fission \citep{Van98,TruTre09}, meaning that individuals with a high birth rate should have a higher mutation rate (they produce more mutant offspring per unit of time).  This is also true for viruses, as mutations mostly arise during replication \citep{SanDom16}. The probability of mutation during the replication is even greater in RNA viruses as their polymerase lacks the  proofreading activity found in the polymerase of DNA viruses \citep{LauFry13}. As some cancer studies emphasise, with the observation of dose-dependent mutation rates \citep{liu2015dose}, the mutation rate of cancer cells at the population scale can  also be correlated with the reproductive success, through the individual birth rate. On the other hand,  most models that describe the dynamics of adaptation of asexual phenotypically structured populations assume a constant mutation rate across phenotypes \citep[e.g.,][]{GerCol07,SniGer10,DesFis11,AlfCar14,GanMir17,GilHamMarRoq19}. Variations in the individual mutation rate per generation can be caused by genotypic variability \citep{ShaAgr12}, environmental factors  \citep{hoffmann2000environmental} or more generally `G x E' interactions. 
The above-mentioned modelling approaches ignore 
these processes but do take into account a certain variability in the reproductive success.  The main goals of the current study is to determine in which context the  generation-time effect should be taken into account in these models  and to understand the consequences of such birth rate - mutation rate dependence on the evolutionary trajectory of the population.

These consequences are not easy to anticipate as the birth rate is also involved in trade-offs with other life-history traits. Such trade-offs play a crucial role in shaping evolution \citep{stearns89}. They create evolutionary compromises, for instance  between dispersal and reproduction \citep{nathan2001challenges,smith2014programmed,helms2015reproduction,xiao2015seed} or between the traits related to survival and those related to birth \citep{taylor1991optimal}. 
In this last case, we expect that the consequences of the trade-off on the dynamics of adaptation strongly depend on the existence of a positive correlation between the birth rate and the mutation rate.  High mutation rates tend to promote adaptation when the population is far from equilibrium \citep{sniegowski2000evolution} 
but eventually have a detrimental effect due to a higher mutation load   when it approaches a mutation-selection equilibrium \citep{AncLam19}. This ambivalent effect of mutation may therefore lead to complex trajectories of adaptation when the birth and mutation rates are correlated.

In the classical models describing the dynamics of adaptation of a phenotypically structured population, the breeding values at a set of $n$ traits are described by a vector $\x \in \Omega \subset \R^n$. 
The breeding value for a phenotypic trait is usually defined as the total additive effect of its genes on that trait, see \citep{FalMac96,Kru04} and is independent of the environmental conditions, given the genotype. For simplicity and consistency with other modelling studies, we will call $\x$ the `phenotype' in the following, although it still represents breeding values.  
The effect of mutations on the phenotype distribution is described through a linear operator $\mutxa$ which does not depend on the parent phenotype $\x$. The operator $\mutxa$  can be described with a convolution product involving a mutation kernel \citep{ChaFer06,GilHamMarRoq17} or with a Laplace operator \citep{Kim64,Lan75,AlfCar14,HamLav20}, corresponding to a diffusion approximation of the mutation effects. Under the diffusion approximation,  $\mutxa(\cdot)=D\, \Delta (\cdot)$ with $D>0$ a constant coefficient which is proportional to the mutation rate and to the mutational variance at each trait. 
The Malthusian fitness $m(\x)$, i.e., the Malthusian growth rate of individuals with phenotype $\x$, is defined as the difference between the birth rate $b(\x)$ and death rate $d(\x)$ of this class of individuals:
\begin{equation}\label{bhetero}
	m(\x)= b(\x)-d(\x).
\end{equation}
The following generic equation then describes the combined effects of mutation and selection on the dynamics of the phenotype density $f(t,\x)$ under a diffusive approximation of the mutation effects:
		\begin{equation}\label{eq:modUcst}
	\p_t f(t,\x) = D\, \Delta f(t,\x) + f(t,\x) m(\x),
	\end{equation}
	in the absence of density-dependent competition, or 
		\begin{equation}\label{eq:f_dens_dep}
	\p_t f(t,\x) = D\, \Delta f(t,\x) + f(t,\x) \lp m(\x)-\int_\Omega f(t,\y) \, d\y\rp,
	\end{equation}
	if density-dependent competition is taken into account. In both cases, the equation satisfied by the frequency $q(t,\x)=f(t,\x)/N(t)$ (with $N(t) = \int_\Omega f(t, \x) d\x$ the total population size) is
	\begin{equation} \label{eq:q} \tag{$\mathcal{Q}_{stand}$}
	    \partial_t q(t,\x) = D\, \Delta q(t,\x) + q(t,\x) (m(\x) - \mb(t)), \ t> 0, \ \x\in \Omega\subset \R^n,
	\end{equation}
with $\mb(t)$ the mean fitness in the population:
\begin{equation} \label{def:mb}
   \mb(t) = \int_\Omega m(\x)  \,q(t,\x) \, d\x.
\end{equation}
These models allowed a broad range of results in various biological contexts: concentration around specific traits \citep{DieJab05,LorMir11,MarRoq16}; explicit solutions \citep{AlfCar14,Bik14,AlfCar17}; moving and/or fluctuating optimum \citep{FigMir19,RoqPat20}; anisotropic mutation effects \citep{HamLav20}. Then can aslo be extended in order to take migration events into account \citep{DebRon13,LavMar20}.

 With these models, the dynamics of adaptation and the equilibria only depend on the birth and death rates through their difference $m(\x)= b(\x)-d(\x).$ Thus, these models do  not discriminate between phenotypes for which both birth and death rates are high compared to those for which they are both low, given that the difference is constant. However, as explained above, the mutation rate may be positively correlated with the birth rate  which could generate an imbalance in favour of one of the two strategies: having a high birth rate vs. having a high survival rate.  To acknowledge the role of phenotype-dependent birth rate and the resulting asymmetric effects of fertility and survival in a deterministic setting, a new paradigm is necessary.

In this work, we consider the case of mutations that occur during the reproduction of asexual organisms. We assume that the probability of mutation per   birth event $\Ui$ \textit{does not} depend on the phenotype of the parent. On the other hand, following classical adaptive landscape approaches \citep{Ten14}, the birth and death rates do depend on the phenotype. Using these basic assumptions, we consider in Section~\ref{sec:causes} standard  stochastic individual-based models of adaptation with mutation and selection.
We present how the standard model \eqref{eq:q} appears naturally as a large population limit of both a discrete-time model and a continuous-time model when the variance of mutation effects is small and when selection is weak, i.e., when the variations of the birth and death rates across the phenotype space are very small.
In this work, however, we are interested in a particular setting where this assumption is not satisfied.
In this case, using results from \cite{fournier_microscopic_2004} for the continuous-time model, we argue that, when the mutation variance is small, a more accurate approximation of the mutation operator is given by $\mutxa (q) = D \Delta (b(\cdot)\, q)$, leading to a new equation of the form
\begin{equation}\label{eq:main_model} \tag{$\mathcal{Q}_b$}
	    \p_t q(t,\x) = D \Delta( b\, q)(t,\x) + q(t,\x) (m(\x) - \mb(t)), \  t> 0, \ \x\in \Omega\subset \R^n.
\end{equation}
Here, the mutation operator $D \Delta( b(\x)\, q)(t,\x)$ depends on the phenotype $\x$ through the birth rate $b(\x)$, translating the fact that new mutants appear at a higher rate when the birth rate increases.
 Models comparable to \eqref{eq:main_model} (but with a discrete phenotype space) appear in the literature, and lead in some cases to quite similar results as the standard model \eqref{eq:q}  \citep{Hof85,BaaGab00}. However, \Rd this is not always the case, \Bk as shown in this contribution.

In Section~\ref{sec:pde}, we use \eqref{eq:main_model} to study the evolution of the phenotype distribution when the population is subjected to a trade-off between a birth optimum and a survival optimum, and we highlight the main differences with the standard approach \eqref{eq:q}.
More specifically, we  study the evolution of the phenotype distribution in the presence of a fitness optimum where $b$ and $d$ are both large (the birth or reproduction optimum), and a survival optimum, where $b$ and $d$ are both small, but such that the difference $b-d$ is symmetrical.

Based on analytical results and numerical simulations, we compare the trajectories of adaptation and the equilibrium phenotype distributions between these two approaches and we check their consistency with the underlying individual-based models. We discuss these results in \Cref{sec conc}.

\bigskip

\section{Emergence of a birth-dependent mutation rate in an individual-based setting \label{sec:causes}}

In this section, we present how the standard equation \eqref{eq:q} and the new model \eqref{eq:main_model} with birth-dependent mutation rate are obtained from large population limits of stochastic individual-based models.
We first state a convergence result due to \cite{fournier_microscopic_2004} which provides the convergence of the phenotype distribution of the population to the solution of an integro-differential equation, when the size of the population tends to infinity.
We then show that, when the variance of the mutation effects is small, this equation yields the new model \eqref{eq:main_model}.
This shows how a dependence between the birth rate and the rate at which new mutant appear in the population arises, even though the probability of mutation per birth event $\Ui$ does not depend on the phenotype of the parent.
We then treat the case of weak selection, and show how the model \eqref{eq:q} is obtained as a large population limit of the phenotype distribution with a specific time scaling, using results in \cite{ChaFer08}.
We also state an analogous result for the discrete-time model, where in the same regime of weak selection and small mutation effects, we show the convergence to the solution of \eqref{eq:q} as the population size tends to infinity, on the same timescale as the other model.

%
%

In this individual-based setting, we consider a finite population of size $N_t$ where each individual carries a phenotype in a bounded open set $\Omega \subset \R^n$. If the individuals at time $t$ have phenotypes $\lbrace \x_1, \ldots, \x_{N_t}\rbrace$, we record the state of the population through the empirical measure
\begin{equation*}
    \nu^K_t = \frac{1}{K} \sum_{i=1}^{N_t} \delta_{\x_i}, \qquad t \geq 0,
\end{equation*}
where $\delta_{\x}$ is the Dirac measure at the point $\x \in \Omega$.  Note that the number of individuals $N_t$ in this stochastic individual-based setting does not correspond to the quantity $N(t)$ defined in the introduction. In fact, these two quantities will be related via the scaling parameter $K > 0$: $N(t) = \lim_{K \to \infty} N_t/K$ (or $N(t) = \lim_{K \to \infty} N_{t/\varepsilon_K}/K$ if time is rescaled, see the definition of $\varepsilon_K$ below).

Let $M_F(\Omega)$ denote the space of finite measures on $\Omega$, endowed with the topology of weak convergence.
For any $\nu \in M_F(\Omega)$ and any measurable
and bounded function $ \phi : \Omega \to \R$, we shall write
\begin{equation*}
    \langle \nu, \phi \rangle = \int_\Omega \phi(\x) \nu(d\x).
\end{equation*}

\subsection{Derivation of the model \eqref{eq:main_model} with birth-dependent mutation rate}

We first consider a continuous-time stochastic individual-based model where individuals die and reproduce at random times depending on their phenotype and the current population size.
We let $b:\Omega \to \R_+$ and $d:\Omega \to \R_+$ be two bounded and measurable functions, and we assume that an individual with phenotype $\x\in\Omega$ reproduces at rate $b(\x)$ and dies at rate $d(\x) + c_K N_t$ for some $c_K>0$. 
This parameter $c_K$ measures the intensity of competition between the individuals in the population, and prevents the population size from growing indefinitely.
Each newborn individual either carries the phenotype of its parent, with probability $1-U$, or, with probability $U$, carries a phenotype $\y$ chosen at random from some distribution $\rho(\x,\y)d\y$, where $\x$ is the phenotype of its parent.

We can now describe the limiting behaviour of this model when the parameter $K$ tends to infinity.
The following convergence result can be found for example in \citet[Theorem~5.3]{fournier_microscopic_2004} and \citet[Theorem~4.2]{ChaFer08}.
Let $D([0,T],M_F(\Omega))$ denote the Skorokhod space of c\`adl\`ag functions taking values in $M_F(\Omega)$.

\begin{proposition}  \label{prop:cvg_overlapping}
    Assume that $\nu^K_0$ converges weakly to a deterministic $f_0 \in M_F(\Omega)$ as $K \to +\infty$ and that $ c_K = c/K$     for some $c>0$.
    Also assume that $\sup_K \mathbb{E}[\langle \nu^K_0, 1 \rangle^3] < +\infty$.
    Then, for any fixed $T > 0$, as $K \to +\infty$,
    \begin{equation*}
        \left( \nu^K_t, t \in [0,T] \right) \longrightarrow \left( f_t, t \in [0,T] \right),
    \end{equation*}
    in distribution in $D([0,T],M_F(\Omega))$, where $(f_t, t \in [0,T])$ \Rd is the unique deterministic function taking values in $M_F(\Omega)$ such that, \Bk for any bounded and measurable $\phi : \Omega \to \R$,
    \begin{equation}\label{eq:q_overlap}
        \langle f_t, \phi \rangle = \langle f_0, \phi \rangle + \int_0^t \langle f_s, b\, \mutx\phi + (b-d-c \langle f_s, 1 \rangle) \phi\rangle ds,
    \end{equation}
    where
    \begin{equation*}
        \mutx\phi(\x) = U \int_\Omega (\phi(\y)-\phi(\x)) \rho(\x,\y) d\y.
    \end{equation*}
\end{proposition}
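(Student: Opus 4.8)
This is a classical law-of-large-numbers (large-population) limit for a birth--death--mutation measure-valued jump process, and I would prove it along the martingale-problem lines of \citet{fournier_microscopic_2004} and \citet{ChaFer08}: write a semimartingale decomposition, derive uniform moment bounds, prove tightness, identify the limit, and prove uniqueness. \textbf{Semimartingale decomposition.} First I would compute the infinitesimal generator of the $M_F(\Omega)$-valued Markov process $(\nu^K_t)$ acting on functionals $\nu\mapsto\langle\nu,\phi\rangle$: an individual at $\x$ gives birth at rate $b(\x)$ (the newborn being a clone, i.e.\ adding $\frac1K\delta_\x$ with probability $1-U$, or a mutant, adding $\frac1K\delta_\y$ with $\y\sim\rho(\x,\cdot)$ with probability $U$) and dies at rate $d(\x)+c_KN_t$ (removing $\frac1K\delta_\x$). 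With $c_K=c/K$, so that $c_KN_t=c\langle\nu^K_t,1\rangle$, Dynkin's formula gives
\[
\langle\nu^K_t,\phi\rangle = \langle\nu^K_0,\phi\rangle + \int_0^t \langle\nu^K_s,\; b\,\mutx\phi + (b-d-c\langle\nu^K_s,1\rangle)\phi\rangle\,ds + M^K_t(\phi),
\]
where $M^K(\phi)$ is a c\`adl\`ag square-integrable martingale whose predictable quadratic variation equals $\frac1K\int_0^t\langle\nu^K_s, b[(1-U)\phi^2 + U\int_\Omega\phi(\y)^2\rho(\cdot,\y)d\y] + (d+c\langle\nu^K_s,1\rangle)\phi^2\rangle\,ds$, hence is bounded by $\frac{C}{K}\norm{\phi}_\infty^2\int_0^t(\langle\nu^K_s,1\rangle+\langle\nu^K_s,1\rangle^2)\,ds$.

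\textbf{Uniform moment bounds.} The crucial ingredient is $\sup_K\mathbb{E}[\sup_{t\le T}\langle\nu^K_t,1\rangle^p]<\infty$ for $p=1,2,3$. Taking $\phi\equiv1$ kills the mutation term ($\mutx 1=0$) and the competition term contributes $-c\langle\nu^K_s,1\rangle^2$, which absorbs the quadratic growth produced by births; the hypothesis $\sup_K\mathbb{E}[\langle\nu^K_0,1\rangle^3]<\infty$ then propagates through a Gronwall argument (the order-three bound is what allows one to close the order-two estimate, because of the $O(1/K)$ jump corrections, and boundedness of $b$ and $d$ controls the remaining terms). In particular $\mathbb{E}[\langle M^K(\phi)\rangle_T]\le C\norm{\phi}_\infty^2/K\to0$.

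\textbf{Tightness and identification of the limit.} I would then establish tightness of $(\nu^K_\cdot)$ in $D([0,T],M_F(\Omega))$ by the Aldous--Rebolledo criterion: the jumps have size $\le1/K$, and the moduli of continuity of the finite-variation part and of the bracket are controlled uniformly in $K$ by the moment bounds and the boundedness of $b$, $d$ and $\rho$; tightness of the total masses $\langle\nu^K_\cdot,1\rangle$ together with compactness of $\Omega$ then yields tightness of the measure-valued processes. Along any subsequential limit $(f_t)$, the second step forces $M^K_t(\phi)\to0$ in $L^2$, and the uniform integrability of $\langle\nu^K_s,1\rangle$ (again from the moment bounds) allows one to pass to the limit in the nonlinear term $c\langle\nu^K_s,1\rangle\phi$, so every limit point satisfies \eqref{eq:q_overlap} for all bounded measurable $\phi$.

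\textbf{Uniqueness and conclusion.} Finally I would check that \eqref{eq:q_overlap} admits at most one solution among measure paths with $\sup_{t\le T}\langle f_t,1\rangle<\infty$: given two solutions $f,\tilde f$, testing against all $\phi$ with $\norm{\phi}_\infty\le1$ and using the boundedness of $b$, $d$, $U$, $\rho$ and of the total masses yields $\norm{f_t-\tilde f_t}_{\mathrm{TV}}\le C\int_0^t\norm{f_s-\tilde f_s}_{\mathrm{TV}}\,ds$, so $f\equiv\tilde f$ by Gronwall. Uniqueness upgrades the subsequential convergence to full convergence, and since the limit is deterministic, convergence in distribution in $D([0,T],M_F(\Omega))$ is the same as convergence in probability. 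I expect the main obstacle to be the moment estimates of the second step --- closing the chain of Gronwall inequalities up to order three \emph{uniformly in $K$} while carefully tracking the discrete $O(1/K)$ correction terms --- together with the subsequent use of these bounds to handle the nonlinearity $c\langle\nu^K_s,1\rangle$ when passing to the limit.
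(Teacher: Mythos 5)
Your proof is correct and takes essentially the same route as the paper: the paper does not reprove this proposition but defers to Theorem~5.3 of Fournier and M\'el\'eard (2004) and Theorem~4.2 of Champagnat et al.\ (2008), whose argument is precisely your scheme (generator/Dynkin decomposition, propagation of moment bounds from the third-moment hypothesis, Aldous--Rebolledo tightness, identification of limit points, and a total-variation Gronwall uniqueness step). The same strategy is also what the authors carry out explicitly in Appendix~A.1 for the discrete-time analogue (Proposition~2.3), so nothing in your outline departs from the paper's approach.
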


We note that, if $f_0$ {\Rd is absolutely continuous} with respect to the Lebesgue measure, $f_t$ admits a density (denoted by $f(t,\cdot)$) for all $t \geq 0$.
In this case, setting $m(\x) = b(\x) - d(\x)$ and
\begin{linenomath}
\begin{align*}
    q(t,\x) = \frac{f(t,\x)}{\langle f_t, 1 \rangle}, && \mb(t) = \int_\Omega q(t,\x) m(\x) d\x,
\end{align*}
\end{linenomath}
we see that the phenotype distribution $q$ solves the following
\begin{equation} \label{eq:q_non_local}
    \partial_t q(t,\x) = \mutxa(b\,q)(t,\x) + \left(m(\x) - \mb(t)\right) q(t,\x),
\end{equation}
where
\begin{equation*}
    \mutxa(f)(\x) = U \left(\int_\Omega f(\y) \rho(\y,\x) d\y - f(\x)\right).
\end{equation*}
\Rd Uniqueness of a solution to \eqref{eq:q_overlap} is proved in \cite{fournier_microscopic_2004} (Theorem~5.3). \Bk
In the model~\eqref{eq:q_non_local},  due to the coefficient $b$ in $\mutxa(b\,q)$, \Rd mutation occurs at a higher rate \Bk in regions where $b$ is higher. As we shall \Rd explain below, this has far reaching \Bk consequences on the qualitative behaviour of the phenotype distribution, which the standard model \eqref{eq:q} does not capture.
 However, the analysis of the integro-differential equation \eqref{eq:q_non_local} is very intricate.
If the variance of mutation effects is sufficiently small, we can instead study a diffusive approximation of equation \eqref{eq:q_non_local}.
Assume that the effects of mutation on phenotype can be described by a mutation kernel $J$, such that $\rho(\y,\x) = J(\x-\y)$. Namely,
\begin{equation*}
    \mutxa(b\,f)(\x) = U \,\lp \int_{\Omega} (b\, f)(\x-\y) \, J(\y) \, d\y - (b\, f)(\x)\rp.
\end{equation*}
Formally, we write a Taylor expansion of $ (b \, q)(t,\x-\y)$ at $\x\in \Omega$:
\[(b \, q)(t,\x-\y)=\sum_{k_1,\ldots,k_n=0}^\infty(-1)^{k_1+\cdots+k_n}\,\frac{y_1^{k_1}\cdots y_n^{k_n}}{k_1 ! \, \cdots \,  k_n !}\,\frac{\partial^{k_1+\cdots+k_n}(b\,q)}{\partial x_1^{k_1}\cdots \partial x_n^{k_n}}(t,\x).\]
We define the central moments of the distribution:
\[\omega_{k_1,\ldots, k_n}=\int_{\R^n} y_1^{k_1}\cdots y_n^{k_n} \, J(y_1,\dots, y_n)\, dy_1 \dots dy_n.\]We make a symmetry assumption on the kernel $J$ which implies that $\omega_{k_1,\ldots, k_n}=0$ if at least one of the $k_i$'s is odd. Moreover, we assume the same variance $\lambda$ at each trait: $\omega_{0,\ldots, 0,k_i=2,0, \ldots, 0 }=\lambda$, and that the moments of order $k_1+\cdots+k_n\ge 4$ are of order $O(\lambda^2)$. These assumptions are satisfied with the classic isotropic Gaussian distribution of mutation effects on phenotype. For $\lambda \ll 1$, we obtain:
\[U \,\lp \int_{\Omega} (b\, q)(t,\x-\y) \, J(\y) \, d\y - (b\, q)(t,\x)\rp\approx \frac{\lambda \, U}{2} \Delta (b \, q) (t,\x)+O(\lambda^2).\]Thus, when the variance $\lambda$ of the (symmetric) mutation kernel $J$ is small, we expect that the solution to \eqref{eq:q_non_local} behaves as the solution to \eqref{eq:main_model}:
\begin{equation*}
    \partial_t q(t,\x) = D \Delta (b\, q) (t,\x) + (m(\x)-\mb(t))\, q(t,\x),
\end{equation*}
where $D = \lambda \, U/2$. 
\Rd Note that the proof of uniqueness for \eqref{eq:q_overlap} in \cite{fournier_microscopic_2004} can be adapted to prove that this equation admits a unique solution  \citep[see also Theorem~4.3 in][]{ChaFer08}. \Bk

\begin{remark}
We recall that the assumption here is that mutations occur during reproduction (e.g. in unicellular organisms or viruses). If we had assumed that mutations take place at a constant rate during each individual's lifetime, instead of linking them to reproduction events, we would have obtained a different equation in \eqref{eq:q_overlap} leading to the standard model \eqref{eq:q} instead of \eqref{eq:main_model}.
\end{remark}

\subsection{Derivations of the standard model \eqref{eq:q}}

The standard model \eqref{eq:q} is classically derived by letting the variance of the mutation kernel tend to zero and by rescaling time to compensate for the fact that mutations have very small effects.
In order to obtain the convergence of the process $(\nu^K_t, t \geq 0)$ in this regime, one also has to assume that the intensity of selection (measured by $b-d$) is of the same order of magnitude as the variance of the mutation kernel.
This corresponds to a weak selection regime, where $b$ and $d$ are almost constant on $\Omega$.

\paragraph*{Large population limit of the continuous-time model in rescaled timescale}

We consider the same stochastic individual-based model as above, but we allow $b$, $d$ and $\rho$ to depend on $K$.
We thus let $b_K(\x)$ denote the birth rate of individuals with phenotype $\x$, $d_K$ their death rate, and $\rho_K$ will be the mutation kernel.
We then make the following assumption.

\subparagraph{Assumption (SE) (frequent mutations with small effects).}
Let $\varepsilon_K = K^{-\eta}$ for some $0 < \eta < 1$ and assume that $\rho_K$ is a symmetric kernel such that, for all $1 \leq i \leq n$,
\begin{linenomath}
\begin{align*}
     \int_\Omega (y_i-x_i)^2 \rho_K(\x,\y) d\y = \varepsilon_K\, \lambda + o(\varepsilon_K), && \int_\Omega (y_i-x_i)^{2+\delta} \Rd{} \rho_K(\x,\y) \Bk{} d\y = o(\varepsilon_K),
\end{align*}
\end{linenomath}
for all in $\x \in \Omega$, some $\lambda > 0$ and $\delta \in ]0,2]$.

~~

This assumption is what justifies the so-called diffusive approximation, where the effect of mutations on the phenotype density is modelled by a Laplacian in continuous-time.

\subparagraph*{Assumption (WS) (weak selection).} Assume that
\begin{linenomath}
\begin{align*}
    b_K(\x) = 1 + \varepsilon_K\, b(\x), && d_K(\x) = 1 + \varepsilon_K \, d(\x), && c_K = \frac{\varepsilon_K}{K} c, 
\end{align*}
\end{linenomath}
for some bounded functions $b:\Omega \to \R$, $d:\Omega \to \R$ and some positive $c$.

~~

The following result then corresponds to Theorem~4.3 in \cite{ChaFer08}.
Recall that $\Omega$ is assumed to be a bounded open set, and further assume that it has a smooth boundary $\partial \Omega$.
Let $C^2_0(\Omega)$ be the set of twice continuously differentiable functions $\phi : \bar{\Omega} \to \R$ such that
\begin{equation*}
    \nabla \phi(\x) \cdot \nub(\x) = 0, \quad \forall \x \in \partial \Omega,
\end{equation*}
where $\nub(\x)$ is the outward unit normal to $\partial\Omega$.

\begin{proposition} \label{prop:cvg_overlapping_rescaled}
    Let Assumptions (SE) and (WS) be satisfied.
    Also assume that $\nu_0^K$ converges weakly to a deterministic $f_0 \in M_F(\Omega)$ as $K \to \infty$ and that \[\sup_K \mathbb{E}[\langle \nu_0^K, 1\rangle^3] < + \infty.\]
    Then, for any fixed $T > 0$, as $K \to + \infty$,
    \begin{equation*}
        \left( \nu^K_{t/\varepsilon_K}, t \in [0,T] \right) \longrightarrow \left( f_t, t \in [0,T] \right),
    \end{equation*}
    in distribution in $D([0,T],M_F(\Omega))$, where $(f_t,t\in[0,T])$ is \Rd the unique deterministic function taking values in $M_F(\Omega)$ such that\Bk, for any $\phi \in C^2_0(\Omega)$,
    \begin{equation*}
        \langle f_t, \phi\rangle = \langle f_0, \phi \rangle + \int_0^t \langle f_s, D \Delta \phi + (b-d-c \langle f_s, 1 \rangle) \phi \rangle ds,
    \end{equation*}
    with $D = \lambda\, U/2$.
\end{proposition}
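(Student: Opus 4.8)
The plan is to carry out the classical martingale-problem and tightness programme for measure-valued birth--death processes, as in \citet{fournier_microscopic_2004} and \citet{ChaFer08}, but on the accelerated timescale $t\mapsto t/\varepsilon_K$. Writing the infinitesimal generator of the individual-based model and applying Dynkin's formula, one obtains, for every $\phi\in C^2_0(\Omega)$, a semimartingale decomposition
\begin{equation*}
  \langle \nu^K_{t/\varepsilon_K},\phi\rangle = \langle \nu^K_0,\phi\rangle + \int_0^t \frac{1}{\varepsilon_K}\Big\langle \nu^K_{s/\varepsilon_K},\; b_K\,\mutx_K\phi + \big(b_K - d_K - c_K K\langle\nu^K_{s/\varepsilon_K},1\rangle\big)\phi\Big\rangle\,ds + M^{K,\phi}_t,
\end{equation*}
where $\mutx_K\phi(\x) = U\int_\Omega(\phi(\y)-\phi(\x))\rho_K(\x,\y)\,d\y$ and $M^{K,\phi}$ is a c\`adl\`ag $L^2$ martingale. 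First I would handle the drift. For the mutation term, a second-order Taylor expansion of $\phi$ at $\x$, the symmetry of $\rho_K$ (which kills the first-order term) and Assumption~(SE) give $\mutx_K\phi(\x) = \tfrac{\varepsilon_K \lambda U}{2}\,\Delta\phi(\x) + o(\varepsilon_K)$, the $o(\varepsilon_K)$ remainder being controlled by the $(2+\delta)$-moment bound in~(SE); multiplying by $b_K = 1+\varepsilon_K b\to 1$ and dividing by $\varepsilon_K$ produces $D\,\Delta\phi(\x)$ with $D=\lambda U/2$. For the remaining terms, Assumption~(WS) gives $b_K-d_K = \varepsilon_K(b-d)$ and $c_K K\langle\nu^K,1\rangle = \varepsilon_K\,c\,\langle\nu^K,1\rangle$, so after division by $\varepsilon_K$ the drift converges, formally, to $\langle f_s,\, D\Delta\phi + (b-d-c\langle f_s,1\rangle)\phi\rangle$.

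Next I would show that the martingale part is asymptotically negligible. Its predictable quadratic variation takes the form
\begin{equation*}
  \langle M^{K,\phi}\rangle_t = \frac{1}{\varepsilon_K K}\int_0^t \big\langle \nu^K_{s/\varepsilon_K},\, g^{K}_s\big\rangle\,ds,
\end{equation*}
where, using~(WS) and the boundedness of $\phi$, one has $|g^K_s| \le C\,(1+\langle\nu^K_{s/\varepsilon_K},1\rangle)$ for a constant $C$ independent of $K$ (the contribution of the density-dependent death rate is in fact even smaller). Combined with the moment bounds below, this yields $\mathbb{E}[\langle M^{K,\phi}\rangle_T]\to 0$ provided $\varepsilon_K K = K^{1-\eta}\to\infty$, which is precisely why $0<\eta<1$ is imposed; Doob's inequality then gives $\sup_{t\le T}|M^{K,\phi}_t|\to 0$ in probability.

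The two ingredients still needed are a uniform moment bound and tightness. Using the competition term to dominate the quadratic growth of the population, a Gronwall argument applied to $t\mapsto \mathbb{E}[\sup_{s\le t}\langle\nu^K_{s/\varepsilon_K},1\rangle^3]$ shows it stays bounded uniformly in $K$, starting from $\sup_K\mathbb{E}[\langle\nu^K_0,1\rangle^3]<\infty$. These bounds, together with the drift and quadratic-variation estimates above, give tightness of $(\nu^K_{\cdot/\varepsilon_K})_K$ in $D([0,T],M_F(\Omega))$ via the Aldous--Rebolledo criterion applied to the real-valued processes $\langle\nu^K_{\cdot/\varepsilon_K},\phi\rangle$ for $\phi$ ranging over a countable convergence-determining subset of $C^2_0(\Omega)$. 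Finally, along any convergent subsequence one passes to the limit in the semimartingale decomposition: the martingale vanishes, the linear terms pass by weak convergence (using continuity of $\phi$ and $\Delta\phi$ on $\bar\Omega$), and the nonlinear term $c\langle\nu^K,1\rangle^2$ passes thanks to the uniform third-moment bound, which makes $\langle\nu^K_{t/\varepsilon_K},1\rangle$ uniformly integrable. Every limit point thus solves the stated weak equation, and uniqueness of its solution --- cited from \citet[Theorem~4.3]{ChaFer08}, and provable by a Gronwall estimate in total variation exactly as for~\eqref{eq:q_overlap} --- forces the whole sequence to converge.

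I expect the main obstacle to be the diffusive approximation near $\partial\Omega$: the Taylor expansion that produces $\Delta\phi$ must be justified uniformly up to the boundary, where the mutation kernel $\rho_K$ may attempt to place mass outside $\Omega$. Reconciling this with convergence to a Neumann problem requires the condition $\nabla\phi\cdot\nub = 0$ on $\partial\Omega$ built into $C^2_0(\Omega)$, so that the would-be boundary flux cancels to the relevant order, together with the assumed smoothness of $\partial\Omega$; this is also what restricts the class of admissible test functions in the limit equation. A secondary technical difficulty is that, because the accelerated process performs $O(1/\varepsilon_K)$ jumps per unit time, the moment and quadratic-variation estimates must be established directly on the rescaled process rather than transported from the $O(1)$-timescale statement of Proposition~\ref{prop:cvg_overlapping}, and the constraint $\eta<1$ enters exactly at this point.
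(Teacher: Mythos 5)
Your proposal is correct and follows exactly the standard martingale-problem programme that the paper relies on: the paper does not reprove this proposition but attributes it to Theorem~4.3 of \cite{ChaFer08}, and your outline (generator/Dynkin decomposition on the accelerated timescale, Taylor expansion of $\mutx_K\phi$ under (SE) with the symmetry killing the first-order term, vanishing of the quadratic variation because $\varepsilon_K K=K^{1-\eta}\to\infty$, Gronwall moment bounds, Aldous--Rebolledo tightness, identification of the limit and uniqueness) coincides with that proof and with the paper's own Appendix~\ref{proof firstapp} treatment of the discrete-time analogue, \Cref{prop:cvg_non_overlapping}. Your closing remarks on the boundary behaviour and on why the estimates must be carried out directly on the rescaled process correctly identify the only genuinely delicate points, so there is nothing to add.
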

For all $t > 0$, if $f_0$ admits a density with respect to the Lebesgue measure, $f_t$ admits a density $f(t,\cdot) \in L^1(\Omega)$ and the phenotype distribution $q(t,\x) = f(t,\x)/\langle f_t, 1 \rangle$ solves \eqref{eq:q}:
\begin{equation*} 
    \partial_t q(t,\x) = D \Delta q(t,\x) + (m(\x) - \mb(t)) \, q(t,\x).
\end{equation*}
As we can see, we have lost the factor $b$ in the mutation term by taking this limit.
This comes from Assumption (WS) which states that $b_K(\x) = 1 + O(\varepsilon_K)$.
As a result this equation does not distinguish the birth optimum from the survival optimum (see Section~\ref{sec:pde}).

\paragraph*{Large population limit of an individual-based model with non-overlapping generations}

We now consider a model where generations are non-overlapping, meaning that, between two generations (denoted $t$ and $t+1$), all the individuals alive at time $t$ first produce a random number of offspring and then die.
The population at time $t+1$ is thus only comprised of the offspring of the individuals alive at time $t$.

Let $w_K : \Omega \to \R_+$ be a measurable and bounded function and assume that an individual with phenotype $\x \in \Omega$ produces a random number of offspring which follows a Poisson distribution with parameter $w_K(\x)$.
In order to include competition, we assume that each of these offspring survives with probability $e^{-c_K N_t}$ for some $c_K > 0$, where $N_t$ is the number of individuals in generation $t$.
Each newborn individual either carries the phenotype of its parent, with probability $1-U$, or, with probability $U$, carries a phenotype $\y$ chosen at random from some distribution $\rho_K(\x,\y)d\y$, where $\x$ is the phenotype of its parent.

We now make several assumptions in order to obtain an approximation of the process as the population size tends to infinity.
For the limiting process to be continuous in time, we need to assume that the change in the composition of the population from one generation to the next is very small, and then rescale time by the appropriate factor.
This ties our hands somewhat, and we need to assume that $w_K$ is very close to one everywhere in $\Omega$.
More precisely, we make the following assumption.

\subparagraph*{Assumption (WS').}  Let $\varepsilon_K = K^{-\eta}$ for some $0 < \eta < 1$ and assume that
\begin{linenomath}
\begin{align*}
    w_K(\x) = \exp\left( \varepsilon_K\, m(\x) \right), && c_K = \frac{\varepsilon_K}{K} c, 
\end{align*}
\end{linenomath}
for some bounded function $m : \Omega \to \R$ and some positive $c$.

~~

Here, $w_K(\x)$ corresponds to the Darwinian fitness (the average number of offspring of an individual with phenotype $\x$), while $m(\x)$ corresponds to the Malthusian fitness (i.e., the growth rate of the population of individuals with phenotype $\x$).
We further assume that $\rho_K$ satisfies Assumption (SE) above.

~~

The large population limit of this process is then given by the following result, which is analogous to similar results in continuous-time (for example in \cite{ChaFer08}). For the sake of completeness, we give its proof in Appendix~\ref{proof firstapp}.
\begin{proposition}  \label{prop:cvg_non_overlapping}
    Assume that Assumption (WS') is satisfied, along with (SE).
    Also assume that, $\nu^K_0$ converges weakly to a deterministic $f_0 \in M_F(\Omega)$.
    Then, for any fixed $T > 0$, as $K \to +\infty$,
    \begin{equation*}
        \left(\nu^K_{\lfloor t / \varepsilon_K \rfloor}, t \in [0,T] \right) \longrightarrow \left( f_t, t \in [0,T] \right),
    \end{equation*}
    in distribution in $D([0,T],M_F(\Omega))$, where $(f_t, t \in [0,T])$ is \Rd the unique deterministic function taking values in $M_F(\Omega)$ such that\Bk, for any $\phi \in C^2_0(\Omega)$,
        \begin{equation} \label{limit_f_discrete}
            \langle f_t, \phi \rangle = \langle f_0, \phi \rangle + \int_0^t \langle f_s, \Rd D \Delta \phi \Bk + (m - c\, \langle f_s, 1 \rangle) \phi \rangle\, ds,
        \end{equation}
    \Rd with $D = \lambda \, U/2$.\Bk
\end{proposition}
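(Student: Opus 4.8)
The plan is to follow the classical law-of-large-numbers scheme for measure-valued branching processes, in the spirit of \citet{fournier_microscopic_2004} and \citet{ChaFer08}, adapted to non-overlapping generations: establish a semimartingale decomposition of $n \mapsto \langle \nu^K_n,\phi\rangle$, prove tightness of the rescaled process in $D([0,T],M_F(\Omega))$, identify every limit point as a solution of \eqref{limit_f_discrete}, and conclude by uniqueness for that weak equation. First I would set $\mathcal{F}_n = \sigma(\nu^K_0,\dots,\nu^K_n)$ and fix $\phi \in C^2_0(\Omega)$. Conditionally on $\mathcal{F}_n$, each of the $N_n$ individuals independently produces a Poisson$(w_K(\x))$ number of offspring, each surviving with probability $e^{-c_K N_n}$ and mutating independently with probability $U$ according to $\rho_K(\x,\cdot)$; computing the resulting compound-Poisson mean gives
\begin{equation*}
    \mathbb{E}\!\left[\langle \nu^K_{n+1},\phi\rangle \,\middle|\, \mathcal{F}_n\right]
    = \left\langle \nu^K_n,\; w_K\, e^{-c_K N_n}\big((1-U)\phi + U\,\rho_K\phi\big)\right\rangle,
\end{equation*}
where $\rho_K\phi(\x) = \int_\Omega \phi(\y)\rho_K(\x,\y)\,d\y$. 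Writing $\Delta^K_n\phi := \langle\nu^K_{n+1},\phi\rangle - \mathbb{E}[\langle\nu^K_{n+1},\phi\rangle\mid\mathcal{F}_n]$ and $G^K_n\phi := w_K e^{-c_K N_n}((1-U)\phi + U\rho_K\phi) - \phi$, one gets the discrete semimartingale decomposition $\langle\nu^K_{n+1},\phi\rangle = \langle\nu^K_n,\phi\rangle + \langle\nu^K_n, G^K_n\phi\rangle + \Delta^K_n\phi$.

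The second step is the asymptotic analysis of this decomposition after the rescaling $n = \lfloor t/\varepsilon_K\rfloor$. Using Assumption~(WS'), $w_K = 1 + \varepsilon_K m + O(\varepsilon_K^2)$ and $e^{-c_K N_n} = 1 - \varepsilon_K c\langle\nu^K_n,1\rangle + O(\varepsilon_K^2)$ uniformly on $\Omega$, while by the symmetry of $\rho_K$ and Assumption~(SE) a second-order Taylor expansion of $\phi$ yields $U(\rho_K\phi - \phi)(\x) = \tfrac{U\lambda}{2}\varepsilon_K\,\Delta\phi(\x) + o(\varepsilon_K) = \varepsilon_K D\,\Delta\phi(\x) + o(\varepsilon_K)$, the remainder being controlled uniformly through the $(2+\delta)$-moment bound in (SE). Hence $G^K_n\phi = \varepsilon_K\big(D\Delta\phi + (m - c\langle\nu^K_n,1\rangle)\phi\big) + o(\varepsilon_K)$, and summing over $n < \lfloor t/\varepsilon_K\rfloor$ turns the drift into a Riemann sum converging to $\int_0^t \big\langle f_s, D\Delta\phi + (m - c\langle f_s,1\rangle)\phi\big\rangle\, ds$ along any converging subsequence. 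For the fluctuation part, the conditional variance of $\Delta^K_n\phi$ is of order $K^{-1}\langle\nu^K_n,1\rangle$ (a sum over the parents of independent compound-Poisson contributions, each of order $K^{-2}$, with bounded intensities), so the martingale $\sum_{n<\lfloor t/\varepsilon_K\rfloor}\Delta^K_n\phi$ has second moment of order $\varepsilon_K^{-1}K^{-1} = K^{\eta - 1} \to 0$; this is exactly where the constraint $\eta < 1$ is used.

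The third step is tightness and identification. Since $M_F(\Omega)$ with the weak topology can be treated via Jakubowski's criterion, it suffices to prove tightness of the real processes $t\mapsto\langle\nu^K_{\lfloor t/\varepsilon_K\rfloor},\phi\rangle$ for $\phi$ in a separating family (the constants and $C^2_0(\Omega)$), together with a compact-containment condition. Tightness of these processes follows from an Aldous--Rebolledo argument: over an interval $[s,s+\eta']$ the drift has total variation $O(\eta') + o(1)$ uniformly (boundedness of $m$, $\phi$, $\Delta\phi$ and of the expected total mass), and the bracket of the martingale part is $O(K^{\eta-1})$; compact containment reduces to a uniform bound $\sup_K\sup_{t\le T}\mathbb{E}[\langle\nu^K_{\lfloor t/\varepsilon_K\rfloor},1\rangle^p]<\infty$ for some $p>1$, which I would obtain by propagating a corresponding bound at time $0$ along the generations directly from the compound-Poisson recursion and (WS'). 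Since the one-generation increment $\langle\nu^K_{n+1},\phi\rangle - \langle\nu^K_n,\phi\rangle$ has conditional mean $O(\varepsilon_K)$ and conditional standard deviation $O(K^{-1/2})$, all jumps of the rescaled process vanish in probability, so every limit point is a.s. continuous; passing to the limit in the semimartingale decomposition identifies it as a solution of \eqref{limit_f_discrete}. Finally, a Gronwall estimate on $\langle f_t - g_t, \phi\rangle$ for two solutions — the same argument used by \citet{fournier_microscopic_2004} and adapted in \citet{ChaFer08} in continuous time — gives uniqueness of the $M_F(\Omega)$-valued solution of \eqref{limit_f_discrete}, which upgrades subsequential convergence to convergence of the whole sequence.

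The step I expect to be the main obstacle is the uniform control of the $o(\varepsilon_K)$ remainders in the drift, and specifically the behaviour of the mutation kernel near $\partial\Omega$: because offspring phenotypes are constrained to $\Omega$, the symmetric kernel $\rho_K(\x,\cdot)$ must be understood in a reflected/truncated sense close to the boundary, and it is precisely the pairing with test functions in $C^2_0(\Omega)$, whose outward normal derivative vanishes on $\partial\Omega$, that makes the first-order boundary contributions cancel and produces the Laplacian with Neumann boundary condition in \eqref{limit_f_discrete}; making the Taylor-expansion estimates uniform up to the boundary is the delicate point. A secondary technical point is the propagation of the moment bound on the total mass needed both for tightness and for the martingale estimate, which has to be carried out by hand from the branching recursion rather than quoted.
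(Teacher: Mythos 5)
Your proposal is correct and follows essentially the same route as the paper's Appendix~A.1: the same one-step conditional-moment computation giving the discrete semimartingale decomposition, the same Taylor expansion of the drift under (WS') and (SE), the same Doob-type estimate showing the martingale part is of order $K^{\eta-1}\to 0$, and the same tightness-plus-identification conclusion (the paper quotes uniqueness of the limit equation from Champagnat and Ferri\`ere rather than re-proving it by a Gronwall argument). The one point where you diverge is compact containment: since the proposition only assumes weak convergence of $\nu^K_0$, a propagated $p$-th moment bound on the total mass is not available, and the paper instead localises with the stopping times $\tau^K_N$ and a Gronwall bound on the stopped total mass --- a substitution your scheme accommodates without difficulty.
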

For all $t>0$,  if $f_0$ admits a density with respect to the Lebesgue measure, $f_t$ admits a density $f(t,\cdot) \in L^1(\Omega)$.
Then $f(t,\cdot)$ solves the equation
\begin{equation*}
    \partial_t f(t,\x) = D \Delta f (t,\x) + \left( m(\x) - c \int_\Omega f(t,\y) d\y \right) f(t,\x).
\end{equation*}
We also note that the phenotype distribution $q(t,\x) = f(t,\x)/\langle f_t, 1 \rangle$ solves 
 \eqref{eq:q}.

~~
Propositions~\ref{prop:cvg_overlapping_rescaled} and \ref{prop:cvg_non_overlapping} show how the standard model \eqref{eq:q} arises as a large population limit of individual-based models in the weak selection regime with small mutation effects.
However, as Proposition~\ref{prop:cvg_overlapping} shows, the fact that the birth rate does not appear in the mutation term is a consequence of the weak selection assumption.
In the next section, we will focus on a situation corresponding to a strong trade-off between birth and survival. In this case, the weak selection assumption is not satisfied. Thus, the new model \eqref{eq:main_model} should be more appropriate to study the dynamics of adaptation, at least when generations are overlapping.

In the model with non-overlapping generations, we expect that the model \eqref{eq:q} emerges even when the weak selection assumption is not satisfied. From an intuitive perspective, with this model, the expected number of mutants per generation is $\Ui \, N(t)$. Thus, if $N(t)$ is close to the carrying capacity, the overall number of mutants should not depend on the phenotype distribution in the population. However, if one tries to take a large population limit of the discrete-time model in the same regime as in Proposition~\ref{prop:cvg_overlapping} (keeping $w$ and $\rho$ fixed and letting the population size tend to infinity), then the phenotype distribution converges to the solution to a deterministic recurrence equation of the form
\begin{equation*}
    \langle q_{t+1}, \phi\rangle = \left\langle q_t, \tfrac{w}{\overline{w}(t)} \mutx\phi + \tfrac{w}{\overline{w}(t)} \phi \right\rangle,
\end{equation*}
where $\mutx$ is as in \eqref{eq:q_overlap}.
We do not study this equation here, but it is interesting to note that the fitness has an effect on the mutations, albeit quite different from that in \eqref{eq:q_overlap}.

In the following section, we use \eqref{eq:main_model} to study the consequences of a birth-dependent mutation rate on the trade-off between birth and survival, and we compare our results to the standard approach of \eqref{eq:q} and to individual-based simulations.

\section{Consequences of a birth-dependent mutation rate on the trade-off between birth and death}\label{sec:pde}

We focus here on the trajectories of adaptation and the large time dynamics given by the model~\eqref{eq:main_model}, with a special attention on the differences with the standard approach~\eqref{eq:q} which neglects the dependency of mutation rate on  birth rate. 

In most related studies, the relationships between the phenotype $\x$ and the fitness $m(\x)$ is described with the standard Fisher's Geometrical Model (FGM) where $m(\x) = r_{max}- \norm{\x}^2/2$. This phenotype to fitness landscape model is widely used, see e.g. \cite{Ten14}, \cite{MarLen15}. It has shown robust accuracy to predict distributions of pathogens \citep{MarLen06a,MarEle07}, and to fit biological data \citep{Perthe14,SchHwa16}.
	Here, however, in order to study the trade-off between birth and survival, we shall assume that the death rate $d$ takes the form: $d(\x) = r-s(\x)$ for some $r>0$, such that
	\begin{equation}\label{eq:def_m=r+b+s}
	    m(\x)=\underbrace{b(\x)}_\text{birth}+\underbrace{s(\x)}_\text{survival}-r,
	\end{equation}
	for some function $s:\Omega \to [0,r]$ such that $b$ and $s$ are symmetric about the axis $x_1=0$, in the sense of \eqref{eq:sym_b_s}, and we assume that $b$ has a global maximum that is not on this axis. 
	As a result $s$ also has a global maximum, which is the symmetric of that of $b$. 
The positive constant $r$ has no impact on the dynamics of the phenotype distribution $q(t,\x)$ in model \eqref{eq:main_model}, as it vanishes in the term $m(\x)-\mb(t)$. To keep the model relevant, the constant $r$ must therefore be chosen such that $d(\x)>0$ for all $x\in \Omega$.

We assume that $b(\x)$ reaches its maximum at  $\Oc_b\in \Omega$ and $s(\x)$ reaches its maximum at $\Oc_s\in \Omega$. If one of the optima leads to a higher fitness value, we expect that the corresponding strategy (high birth vs. high survival) will be selected. To avoid such `trivial' effects, and to analyse the result of the trade-off between birth and survival independently of any fitness bias towards one or the other, we make the following assumptions. The domain $\Omega$ is symmetric about the hyperplane $\{x_1=0\}$. Next, $b$ and $s$ are positive, continuous over $\overline{\Omega}$ and symmetric in the following sense:
\begin{equation} \label{eq:sym_b_s}
  b(\x)=s(\iota(\x)), \hbox{ with }\iota(\x)=\iota(x_1,x_2, ..., x_n) = (-x_1,x_2, ..., x_n).  
\end{equation}
The optima are then also symmetric about the axis $x_1=0$:
\[\Oc_b=(\beta,0,\ldots,0) \hbox{ and }\Oc_s=(-\beta,0,\ldots,0),\]for some $\beta>0$, so that the birth optimum is situated to the right of $x_1=0$ and the survival optimum is situated to the left of $x_1=0$. A schematic representation of the birth and survival terms and corresponding fitness function, along the first dimension $x_1$ is given in Fig.~\ref{fig:camel}.

\begin{figure}
	\center
\subfigure[Two optima]{	\input{figfitness2.tex}}
\subfigure[A single optimum]{	\input{noptim.tex}}
	\caption{{\bf Schematic representation of the fitness function $m(\x)$ along the phenotype dimension $x_1$.} In both cases the black dashed lines correspond to the survival optimum $\Oc_s=(-\beta,\ldots,0)$ (on the left) and the birth optimum $\Oc_b=(\beta,0,\ldots,0)$ (on the right).  In panel (a) those optima are almost superposed with those of $m$, which is very different from panel (b). In red we pictured the functions $b-r/2$ and  $s-r/2$. }
	\label{fig:camel}
\end{figure}

Finally, we assume that the birth rate is larger than the survival rate in the whole half-space  around $\Oc_b$ $(\Omega \cap \{x_1> 0\})$, and conversely, from \eqref{eq:sym_b_s}, the survival rate is higher in the other half-space. In other terms:
\begin{equation}\label{hyp_b}
\begin{array}{l}
 b(x_1,\ldots,x_n) > s(x_1,\ldots,x_n), \hbox{ for all }\x\in \Omega \cap \{x_1> 0\},  \\
 s(x_1,\ldots,x_n) > b(x_1,\ldots,x_n), \hbox{ for all }\x\in \Omega \cap \{x_1< 0\}.
\end{array}
\end{equation}
From the symmetry assumption \eqref{eq:sym_b_s}, we know that the hyperplane $\{x_1=0\}$ is a critical point for  $b+s$  in the direction $x_1$, that is $\partial_{x_1}b(0,x_2,\ldots,x_n)=-\partial_{x_1}s(0,x_2,\ldots,x_n)$.  

For the well-posedness of the model~\eqref{eq:main_model}, and as the integral of $q(t,\x)$ over $\Omega$ must remain equal to $1$ (recall that $q(t,\cdot)$ is a probability distribution), we assume reflective (Neumann) boundary conditions:\[b(\x) (\nabla q(t,\x) \cdot \nub(\x))+(\nabla b(\x) \cdot \nub(\x))\, q(t,\x)=0, \x\in \partial\Omega,\]with $\nub(\x)$ the outward unit normal to $\partial \Omega$, the boundary of $\Omega$. We also assume a compactly supported initial condition $q_0(\x)=q(0,\x)$, with integral $1$ over $\Omega$.

\subsection{Trajectories of adaptation \label{sec:traj_adapt}}

The methods developed in \cite{HamLav20} provide analytic formulas describing the full dynamics of adaptation, and in particular the dynamics of the mean fitness $\mb(t)$, for models of the form~\eqref{eq:q}, i.e., with a constant mutation rate. As far as model~\eqref{eq:main_model} is concerned, due to the birth-dependent term in the mutation operator $D\, \Delta(b\, q)$, the derivation of comparable explicit formulas seems out of reach. To circumvent this issue,  we use numerical simulations to exhibit some qualitative properties of the adaptation dynamics, that we demonstrate next. We focus on the dynamics of the mean phenotype $\xb(t)$ and of the mean fitness $\mb(t)$, to be compared to the `standard' case, where the mutation rate does not depend on the phenotype, and to individual-based stochastic simulations with the assumptions of Section \ref{sec:causes}. In the PDE (partial differential equation) setting, the mean phenotype $\xb(t)\in \Omega$ and mean fitness $\mb(t)\in \R$ are defined by:
\[\xb(t):=\int_{\Omega}\x \, q(t,\x) \, d\x, \quad \mb(t):=\int_{\Omega}m(\x) \, q(t,\x) \, d\x.\]

\paragraph{Numerical simulations} Our numerical computations are carried out in dimension $n=2$, starting with an initial phenotype concentrated at some point $\x_0$ in $\Omega$. We solved the PDEs with a method of lines (the Matlab codes are available in the Open Science Framework repository: \url{https://osf.io/g6jub/}). The trajectories given by the PDE \eqref{eq:main_model} with a birth-dependent mutation rate are depicted in Fig.~\ref{fig:traj_bdep}a, together with 10 replicate simulations of a stochastic individual-based model with overlapping generations (see Section~\ref{sec:causes}). The mean phenotype is first attracted by the birth optimum $\Oc_b$. In a second time, it converges towards $\Oc_s$. This pattern leads to a trajectory of mean fitness which exhibits a small `plateau':   the mean fitness seems to stabilise at some value smaller than the ultimate value $\mb_\infty$ during some period of time, before growing again at larger times. The trajectories given by individual-based simulations exhibit the same behaviour. 

On the other hand, simulation of the standard equation~\eqref{eq:q}  without dependence of the mutation rate with respect to the phenotype (with Neumann boundary conditions), leads to standard saturating trajectories of adaptation, see~Fig.~\ref{fig:traj_bdep}b \citep[already observed in][with this model]{MarRoq16}. This time,  the trajectories given by the model \eqref{eq:q} are in good agreement with those given by an individual-based model with non-overlapping generations (see Section~\ref{sec:causes}).

If the initial population density $q_0$ is symmetric about the hyperplane $\{x_1=0\}$, then so does $q(t,\x)$ at all positive times in this case.  We indeed observe that if $q(t,\x)$ is a solution of \eqref{eq:q} with initial condition $q_0$, then so does  $q(t,\iota(\x))$. By uniqueness \citep[which follows from][]{HamLav20},
 $q(t,\x)=q(t,\iota(\x))$ at all times. This in turns implies that the mean phenotype $\xb(t)$ remains on the hyperplane $\{x_1=0\}$, i.e., at the same distance of the two optima $\Oc_b$ and $\Oc_s$. Besides, even if $q_0$ was not symmetric about $\{x_1=0\}$, i.e., if the initial phenotype distribution was biased towards one of the two optima, the trajectory of $\xb(t)$ would ultimately still converge to the axis  $\{x_1=0\}$.  Again, this is a consequence of the uniqueness of the positive stationary state of ~\eqref{eq:q} (with integral $1$), which is itself a consequence of the uniqueness of the principal eigenfunction (up to multiplication) of the operator $\phi\mapsto D\, \Delta\, \phi + m(\x) \,\phi$ \citep[this uniqueness result is classical, see e.g.][]{AlfVer18}.

	\begin{figure}
		\center
		\subfigure[Model \eqref{eq:main_model}]{
		\includegraphics[width=0.32\textwidth]{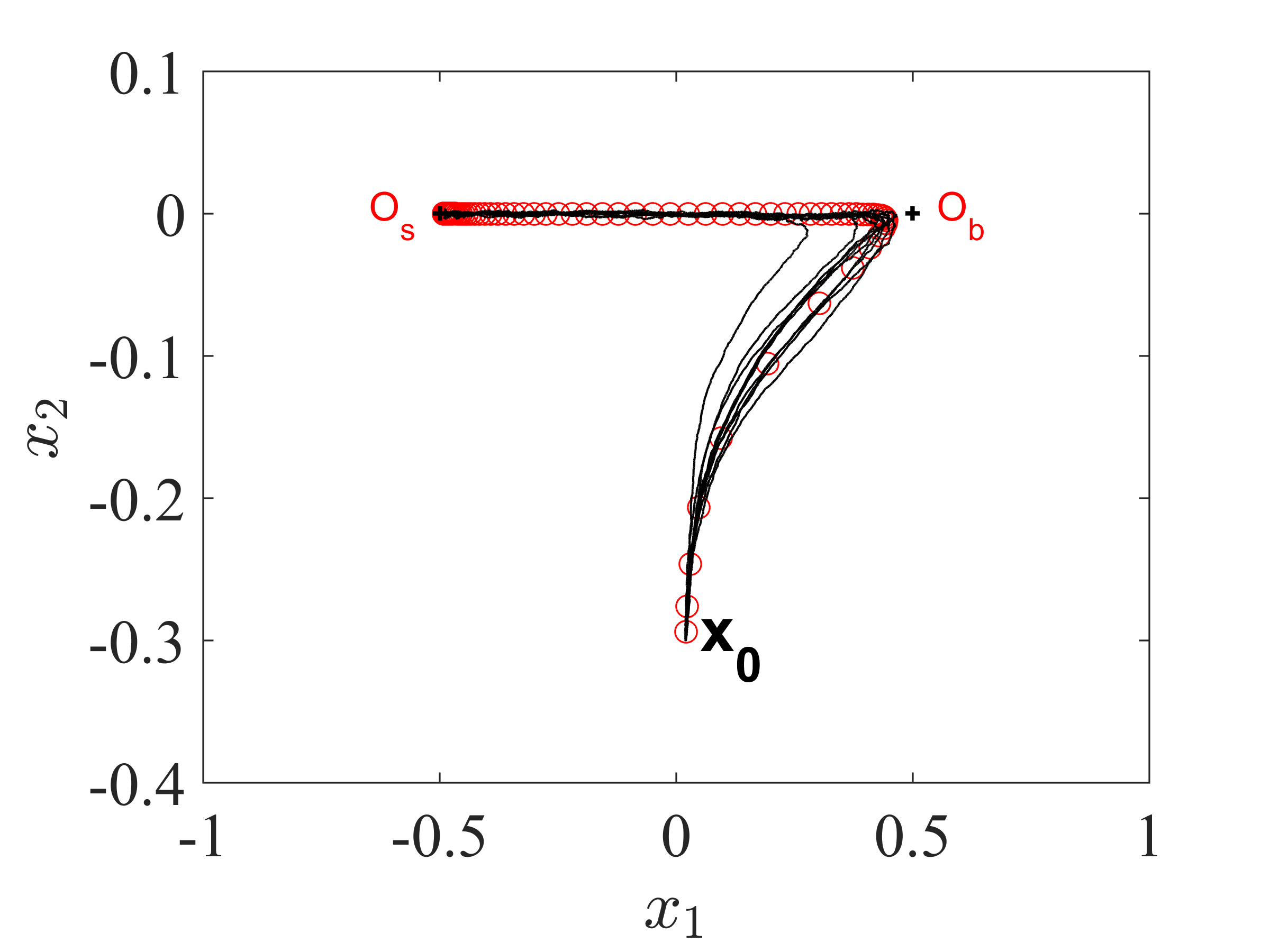}
		\includegraphics[width=0.32\textwidth]{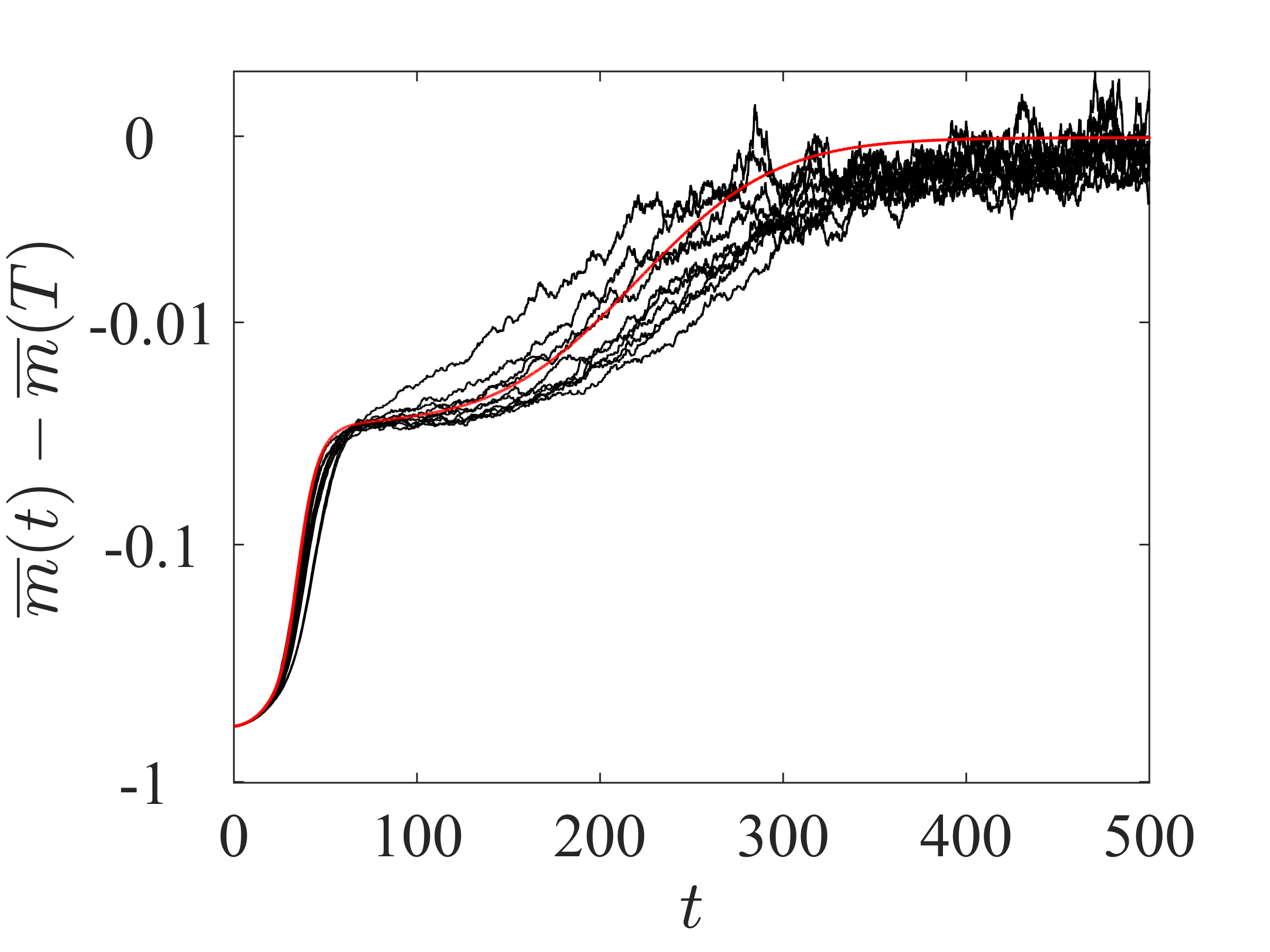}
		\includegraphics[width=0.32\textwidth]{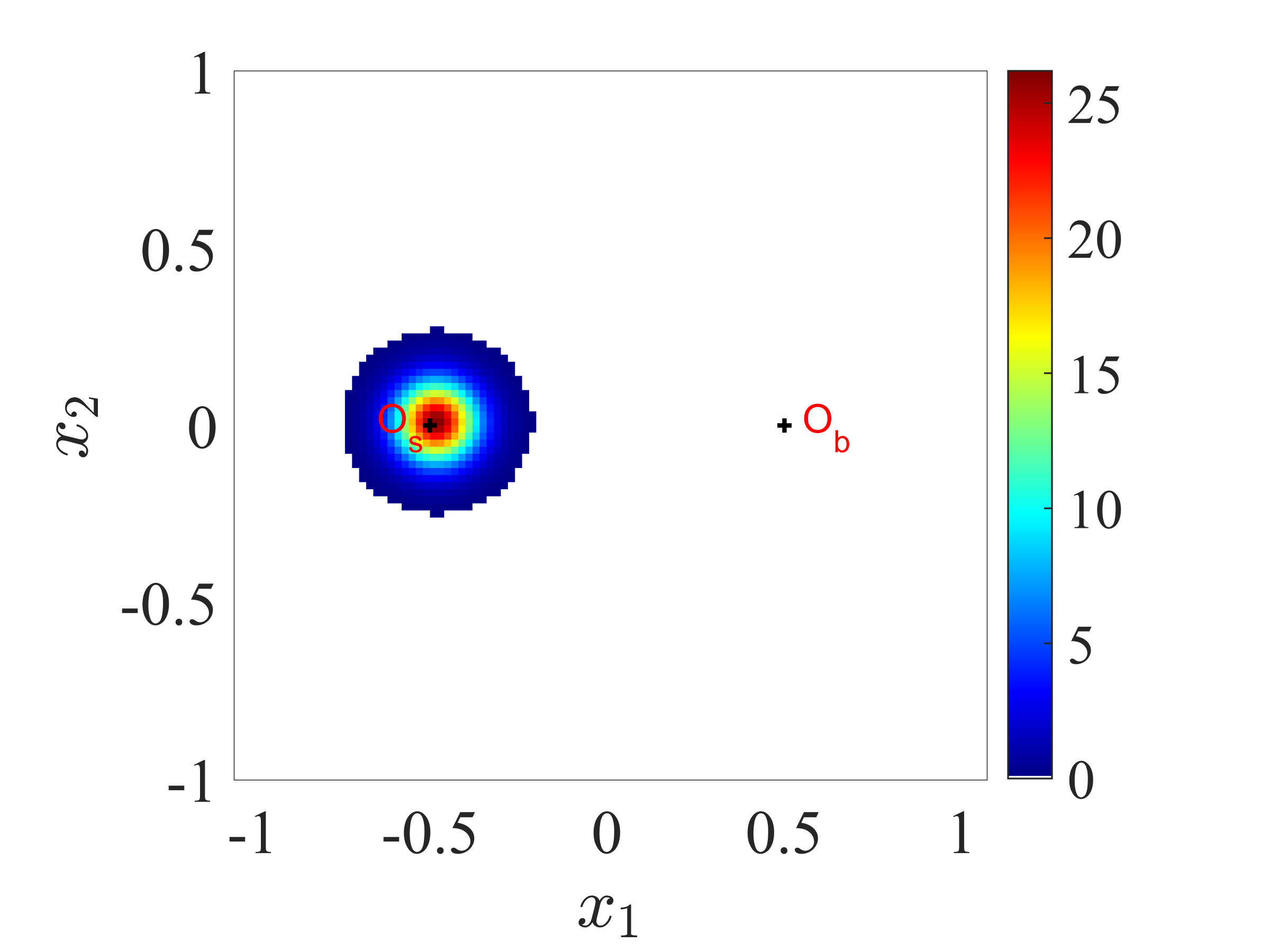}
		}
		\subfigure[Model  \eqref{eq:q}]{
		\includegraphics[width=0.32\textwidth]{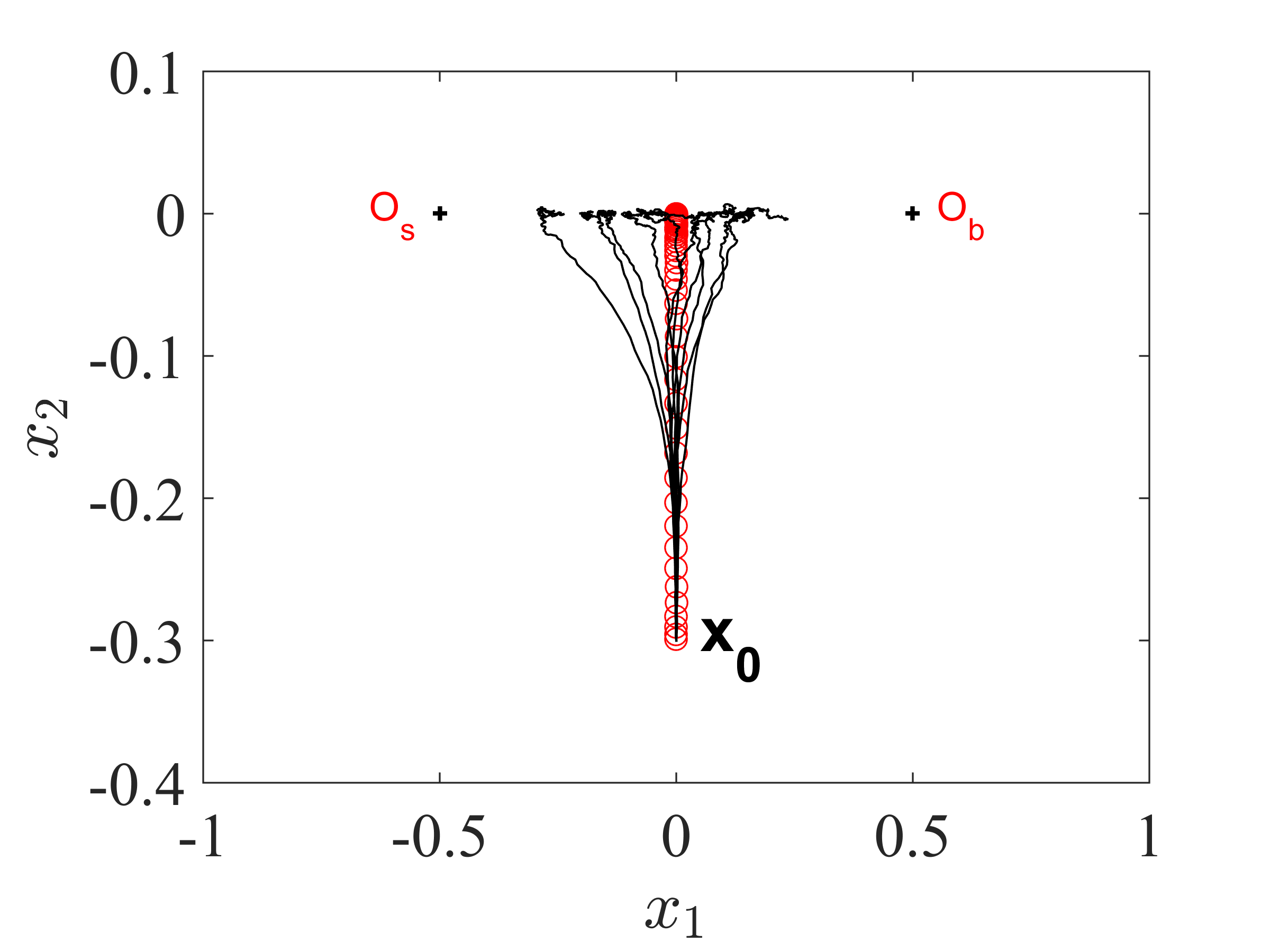}
		\includegraphics[width=0.32\textwidth]{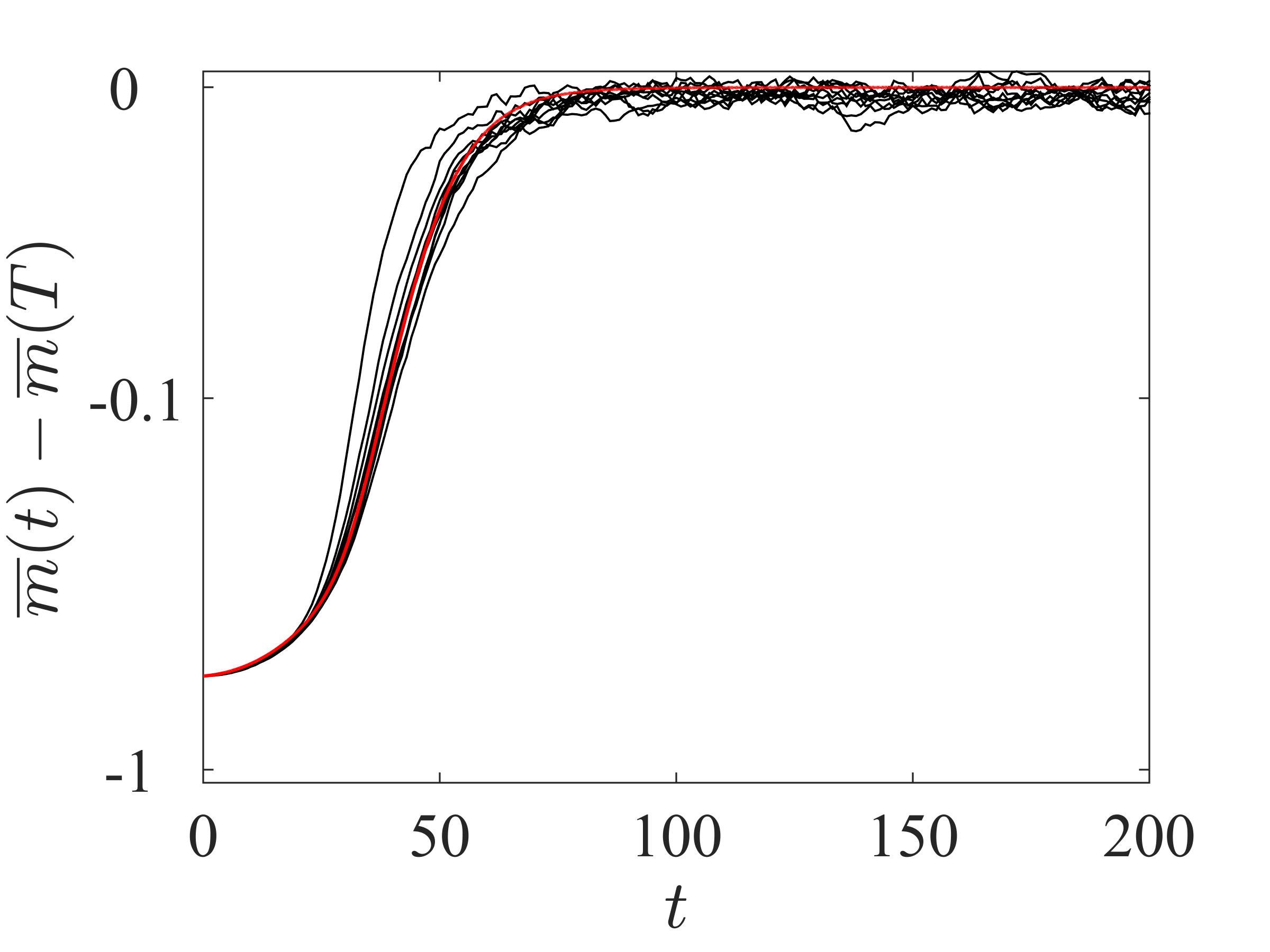}
		\includegraphics[width=0.32\textwidth]{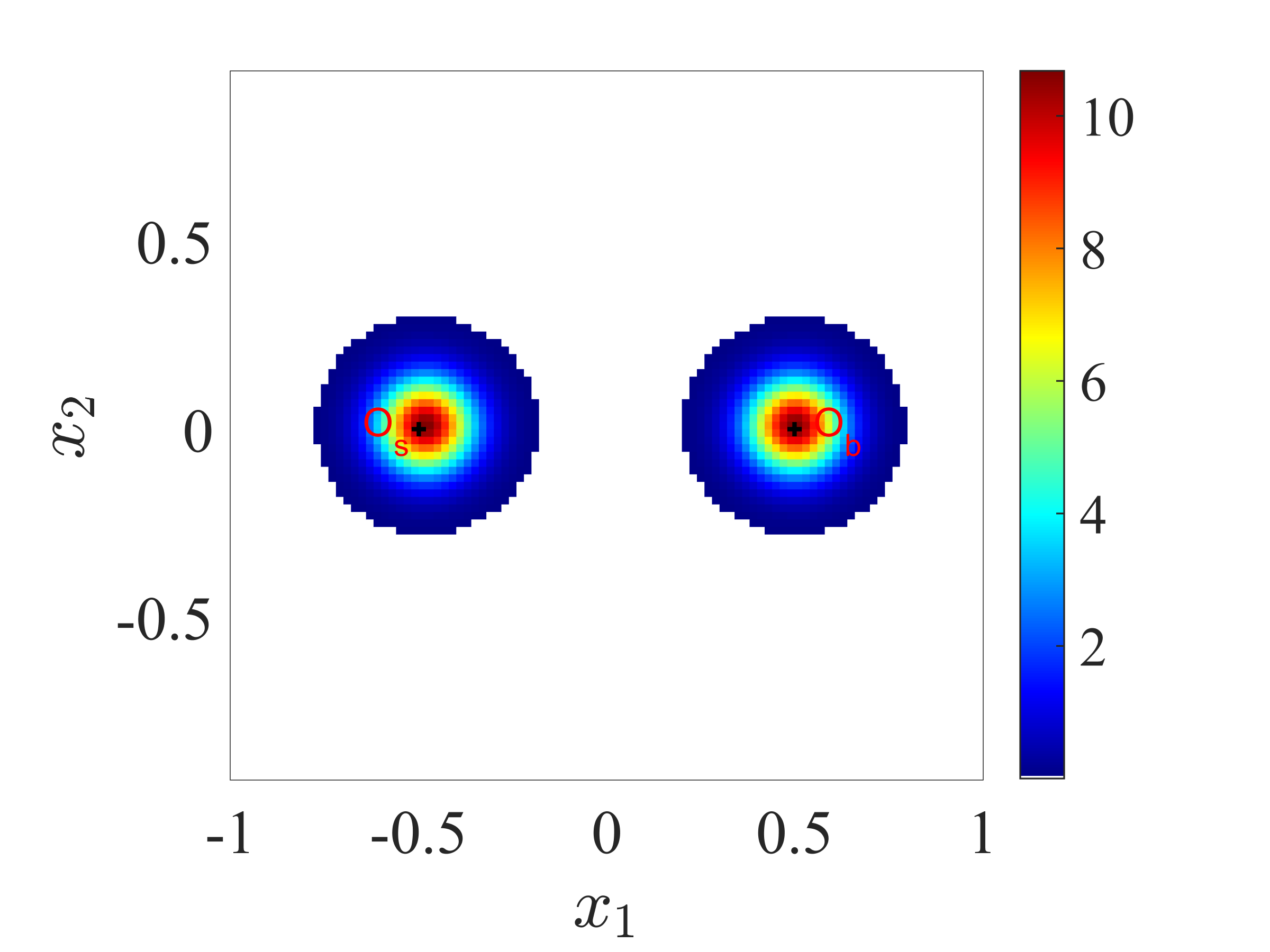}}
		\caption{{\bf Trajectory of adaptation and stationary distribution: model \eqref{eq:main_model} with birth-dependent mutation rate vs  standard model \eqref{eq:q}.}   The red circles in the left panels correspond to the position of the mean phenotype $\xb(t)$ with initial condition concentrated at   $\x_0=(0,-0.3)$,  at successive times $t=0,5,\ldots,T=500$ (upper panel) and $t=0,2,\ldots,T=200$ (lower panel). The \Rd red lines in the \Bk central panels describes the dynamics of the mean fitness (relative to its value at \Rd the final simulation time $T$),\Bk{} $\mb(t)-\mb(T)$, in a logarithmic scale. The black curves in these panels correspond to 10  replicate simulations of the individual-based models of \Cref{sec:causes}, with either overlapping generations (upper panels) or non-overlapping generations (lower panels). 
		The right panels correspond to the phenotype distribution at $t=T.$  We assumed here that the dimension is $n=2,$ $\Oc_b=(1/2,0)$, $\Oc_s=(-1/2,0)$ (i.e., $\beta=1/2$), $b(\x)=b(x_1,x_2)=b_0+\exp\left[-(x_1-\beta)^2/(2 \, \sigma_{x_1}^2)-x_2^2/(2 \, \sigma_{x_2}^2)\right]$, $s(\x)=s_0+\exp\left[-(x_1+\beta)^2/(2 \, \sigma_{x_1}^2)-x_2^2/(2 \, \sigma_{x_2}^2)\right]$,  $\sigma_{x_1}^2=\sigma_{x_2}^2=1/10$, $b_0=s_0=0.7$, $r=1+s_0$ and $D=2.4\cdot 10^{-4}$.  In the individual-based settings, we assumed a Gaussian mutation kernel with variance $\lambda=6\cdot 10^{-4}$ and a mutation rate $U=0.8$ so that $D=\lambda\, U/2$. }
		\label{fig:traj_bdep}
	\end{figure}

\paragraph{Initial bias towards the birth optimum} One of the qualitative properties observed in the simulations (Fig.~\ref{fig:traj_bdep}a) is an initial tendency of the trajectory of the mean phenotype $\xb(t)$ to go towards the birth optimum $\Oc_b$. We show here that this is a general feature, conditioned by the shape of selection along other dimensions. For simplicity, we denote by $\xxb(t)$ the mean value of the first trait, that is, the first coordinate of $\xb(t)$. We consider initial conditions $q_0$ that are symmetric about the hyperplane $\{x_1=0\}$, and that are localised around a phenotype $\x_0\in \{x_1=0\}$. By localised, we mean that $q_0$ vanishes outside some compact set that contains $\x_0$. We denote by $K_0$ the support of $q_0$, and define  $K_0^+:=K_0 \cap \{x_1>0\}$ the `right part of $K_0$'. We prove the following result (the proof is detailed in Appendix~\ref{proof initial}).
\begin{proposition}\label{prop:initial_bias}
	Let $q$ be the solution of \eqref{eq:main_model}, with an initial condition $q_0$ which satisfies the above assumptions. Then the following holds.
	\begin{itemize}
	    \item If $\Delta (x_1 m) \geq 0$ (and $\not \equiv 0$) on $K_0^+$, 
	    	    then the solution is initially biased towards the \underline{birth} optimum, that is
	    	\begin{equation}
	   \xxb'(t=0)=0\quad  \hbox{ and }\quad \xxb''(t=0)>0.
	\end{equation}
	\item If $\Delta (x_1 m) \leq 0$ (and $\not \equiv 0$) on $K_0^+$, then the solution is initially biased towards the \underline{survival} optimum, that is
	  	\begin{equation}
	   \xxb'(t=0)=0\quad \hbox{ and }\quad \xxb''(t=0)<0.
	\end{equation}
	\end{itemize}
\end{proposition}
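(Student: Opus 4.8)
The plan is to compute the first two time-derivatives of the first moment $\xxb(t)=\int_\Omega x_1\,q(t,\x)\,d\x$ at $t=0$, using three ingredients: the symmetry of $q_0$ and of $m$ about $\{x_1=0\}$ to discard odd-in-$x_1$ contributions, the identity $\Delta x_1=0$ together with the compact support of $q_0$ to discard the mutation term after integrating by parts, and finally a reflection substitution to turn $b$ into $s$ and reveal the factor $b-s$ whose sign is controlled by \eqref{hyp_b}.

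First I record the consequences of symmetry. By \eqref{eq:def_m=r+b+s} and \eqref{eq:sym_b_s}, $m(\iota(\x))=b(\iota(\x))+s(\iota(\x))-r=s(\x)+b(\x)-r=m(\x)$, so $m$ is even in $x_1$; since $q_0$ is even in $x_1$ and $\Omega$ is symmetric, $\xxb(0)=\int_\Omega x_1 q_0=0$. Next, differentiating under the integral (licit by parabolic regularity), $\xxb'(t)=\int_\Omega x_1\big(D\Delta(b q)+q(m-\mb)\big)\,d\x$. At $t=0$, $bq_0$ is compactly supported inside $\Omega$, so integrating by parts twice and using $\Delta x_1=0$ kills the mutation contribution, while $x_1 q_0(m-\mb(0))$ is odd in $x_1$ and integrates to zero; hence $\xxb'(0)=0$.

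For the second derivative I differentiate $\xxb'(t)=D\int_\Omega x_1\Delta(bq)\,d\x+\int_\Omega x_1 qm\,d\x-\mb(t)\xxb(t)$ once more at $t=0$. The function $\p_t q(0,\cdot)=D\Delta(bq_0)+q_0(m-\mb(0))$ again vanishes in a neighbourhood of $\partial\Omega$, so $\tfrac{d}{dt}\big[D\int_\Omega x_1\Delta(bq)\big]\big|_{0}=D\int_\Omega x_1\Delta\big(b\,\p_t q(0,\cdot)\big)=0$ by the same integration by parts, and $\tfrac{d}{dt}[\mb\,\xxb]\big|_0=0$ because $\xxb(0)=\xxb'(0)=0$. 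There remains $\int_\Omega x_1 m\,\p_t q(0,\cdot)\,d\x$; the part $\int_\Omega x_1 m\,q_0(m-\mb(0))$ is odd in $x_1$ hence zero, and integrating the remaining part by parts twice (no boundary terms, $bq_0$ compactly supported) and discarding $\Delta(x_1\mb(0))=0$ gives
\[
\xxb''(0)=D\int_\Omega \Delta(x_1 m)(\x)\,b(\x)\,q_0(\x)\,d\x .
\]
Now the key step: since $\iota$ is an isometry, $\Delta(x_1 m)\circ\iota=\Delta\big((x_1 m)\circ\iota\big)=\Delta(-x_1 m)=-\Delta(x_1 m)$ (using $m\circ\iota=m$), while $q_0\circ\iota=q_0$ and $b\circ\iota=s$. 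Substituting $\x\mapsto\iota(\x)$ on the half $\{x_1<0\}$ therefore yields
\[
\xxb''(0)=D\int_{K_0^+}\Delta(x_1 m)(\x)\,\big(b(\x)-s(\x)\big)\,q_0(\x)\,d\x .
\]

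Finally I conclude by a sign inspection. On $\{x_1>0\}\supset K_0^+$ one has $b-s>0$ by \eqref{hyp_b}, and $q_0\ge 0$ with $q_0>0$ dense in $K_0=\operatorname{supp}q_0$; so the integrand has the sign of $\Delta(x_1 m)$ on $K_0^+$, and the hypothesis $\Delta(x_1 m)\ge 0$ (resp. $\le 0$) on $K_0^+$, together with continuity and $\not\equiv 0$, forces the integrand to be $\ge 0$ (resp. $\le 0$) and strictly of that sign on a subset of $K_0^+$ of positive measure, whence $\xxb''(0)>0$ (resp. $<0$). With $\xxb(0)=\xxb'(0)=0$ this is exactly the asserted initial bias toward the birth (resp. survival) optimum. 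I expect the main obstacle to be the careful bookkeeping in the two integrations by parts — ensuring every boundary term genuinely vanishes, which is precisely where the localisation of $q_0$ is used, both for $q_0$ itself and for $\p_t q(0,\cdot)$ — and tracking the sign change under the reflection $\iota$; a secondary delicate point is justifying strictness, i.e. that $q_0$ is positive somewhere $\Delta(x_1 m)$ does not vanish.
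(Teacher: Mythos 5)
Your proof is correct and follows essentially the same route as the paper's: both establish $\xxb'(0)=0$ by Green's formula plus symmetry, reduce $\xxb''(0)$ to $D\int_\Omega b\,q_0\,\Delta(x_1 m)$ via two integrations by parts using the compact support of $q_0$ and of $\p_t q(0,\cdot)$, and then fold the integral onto $K_0^+$ by the reflection $\iota$ to produce the factor $b-s>0$. Your handling of the strict sign at the end is in fact slightly more careful than the paper's.
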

A surprising feature of this proposition is the discussion around the sign of the quantity $\Delta (x_1 m)$. It shows that the local convexity (or concavity) of $m$ around the initial phenotype is important. It stems from the overall shape and symmetry of $m$. We first illustrate this in dimension $1$. In that case, the Laplace operator simply becomes
\[
\Delta (x m) = x m''(x) + 2 m'(x)=:g(x),
\]
By the symmetry assumption \eqref{eq:sym_b_s}, we know that $m'(0)=0$ and thus $g(0)=0$.  Therefore, in this one dimensional case, the discussion of \Cref{prop:initial_bias} about the sign of $\Delta(x m)$ is  linked to the
sign of $g'(0)=3m''(0)$, that is the local convexity of $m$ around $0$. Equivalently, it is also dictated by a discussion about the shape of $m$: if  $m$ presents a profile with two symmetric optima (camel shape, Fig.~\ref{fig:camel}a) or a single one located at $0$ (dromedary shape, Fig.~\ref{fig:camel}b), the outcome of the initial bias is different. If $m$ has a camel shape, then necessarily $m$ admits a local minimum around $0$. Therefore, as a consequence of \Cref{prop:initial_bias}, there is an initial bias towards the birth optimum.
On the other hand, if $m$ has a dromedary shape, the critical point $0$ is also a global maximum of $m$. From \Cref{prop:initial_bias}, it means that there is an initial bias towards survival.

This can be explained as follows. In the case where $m$ has two optima, the population is initially around a minimum of fitness. By symmetry of $m$ there is no fitness benefice of choosing either optimum. However,
individuals on the right have a higher mutation rate, which generates variance to fuel and speed-up adaptation, which explains the initial bias towards right. On the other hand, if $0$ is the unique optimum of the fitness function, the initial population is already at the optimum. Thus, generating more variance does not speed-up adaptation, but on the contrary generates more mutation load. \Rd Due to mutations,  the population cannot remain at the initial  optimum.  As the individuals on the left have a smaller mutation rate,  they tend to remain closer to the optimum, and therefore increase in proportion compared to the individuals on the right.  This explains the initial bias towards left.  As we will see with the spectral analysis in the next section, this bias towards left becomes a general feature at large times, independently of the conditions in \Cref{prop:initial_bias}. \Bk


In a multidimensional setting,  we can follow the same explanations, even if another phenomenon can arise.  The reason lies in the following formula:
\begin{equation}\label{eq:lap_exp}
    \Delta(x_1 m) = \p_{x_1 x_1}(x_1 m) + x_1 \sum_{j\geq 2} \p_{x_jx_j} m.
\end{equation}
Suppose that, as in Fig.~\ref{fig:camel}a, there is a local minimum around $\x_0$, in the first dimension. Then, the first term of \eqref{eq:lap_exp} is positive in a neighbourhood of $\x_0$ as soon as $x_1>0$, as we explained previously. In dimension $n\ge 2$, if the sum of the second derivatives with respect to the other directions is negative, the overall sign of $\Delta(x_1 m)$ may be changed. Such a situation can arise in dimension 2 if $\x_0$ is a saddle point. This phenomenon can be observed on Fig.~\ref{fig:diff scenar}. In both Fig.~\ref{fig:diff scenar}a and Fig.~\ref{fig:diff scenar}b, the fitness function $m$ is camel-like along the first dimension, as pictured in Fig.~\ref{fig:camel}a. However, as a consequence of the second dimension, we observe, or not, an initial bias towards the birth optimum. Similarly to the one-dimensional case, if the mutational load is too important, here on the second dimension, we do not observe this initial bias. This of course cannot be if $\x_0$ is a local minimum of $m$ in $\R^n$.

	\begin{figure}
		\center
		\subfigure[Shape 1: $\Delta (x_1 m(x_1,x_2))>0$ on the axis $\{x_1=0\}$]{\includegraphics[width=0.33\textwidth]{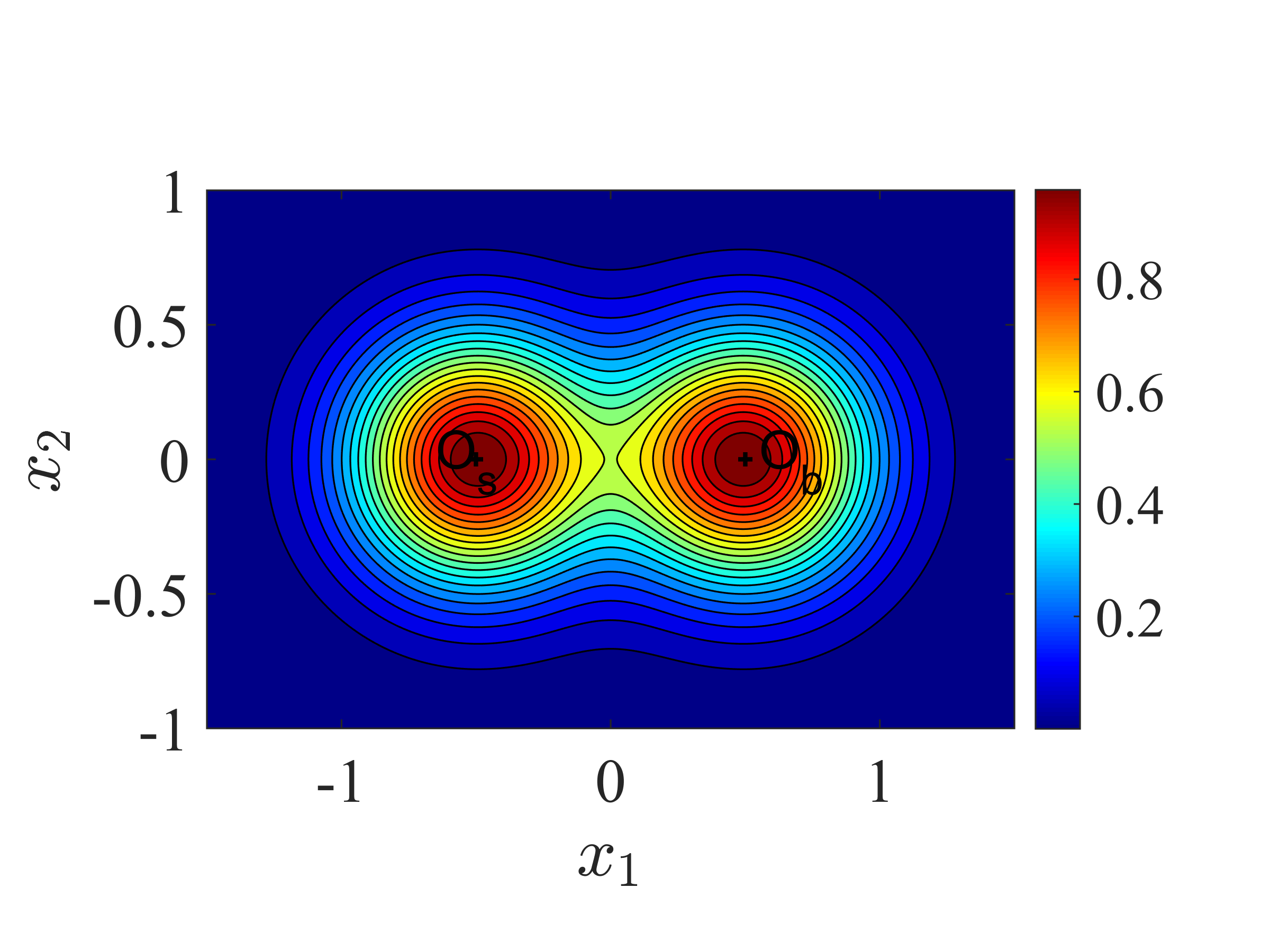}\includegraphics[width=0.33\textwidth]{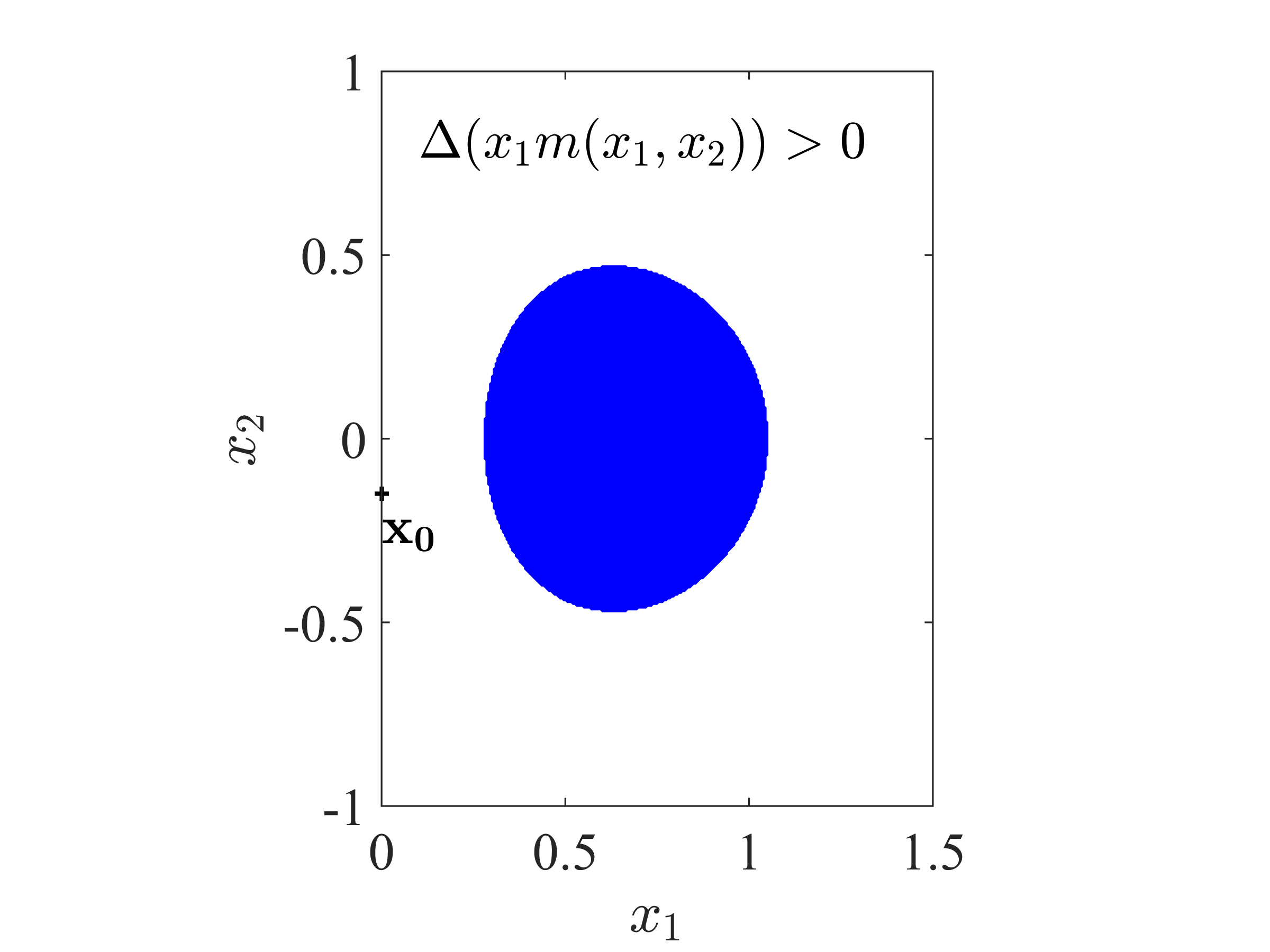} \includegraphics[width=0.33\textwidth]{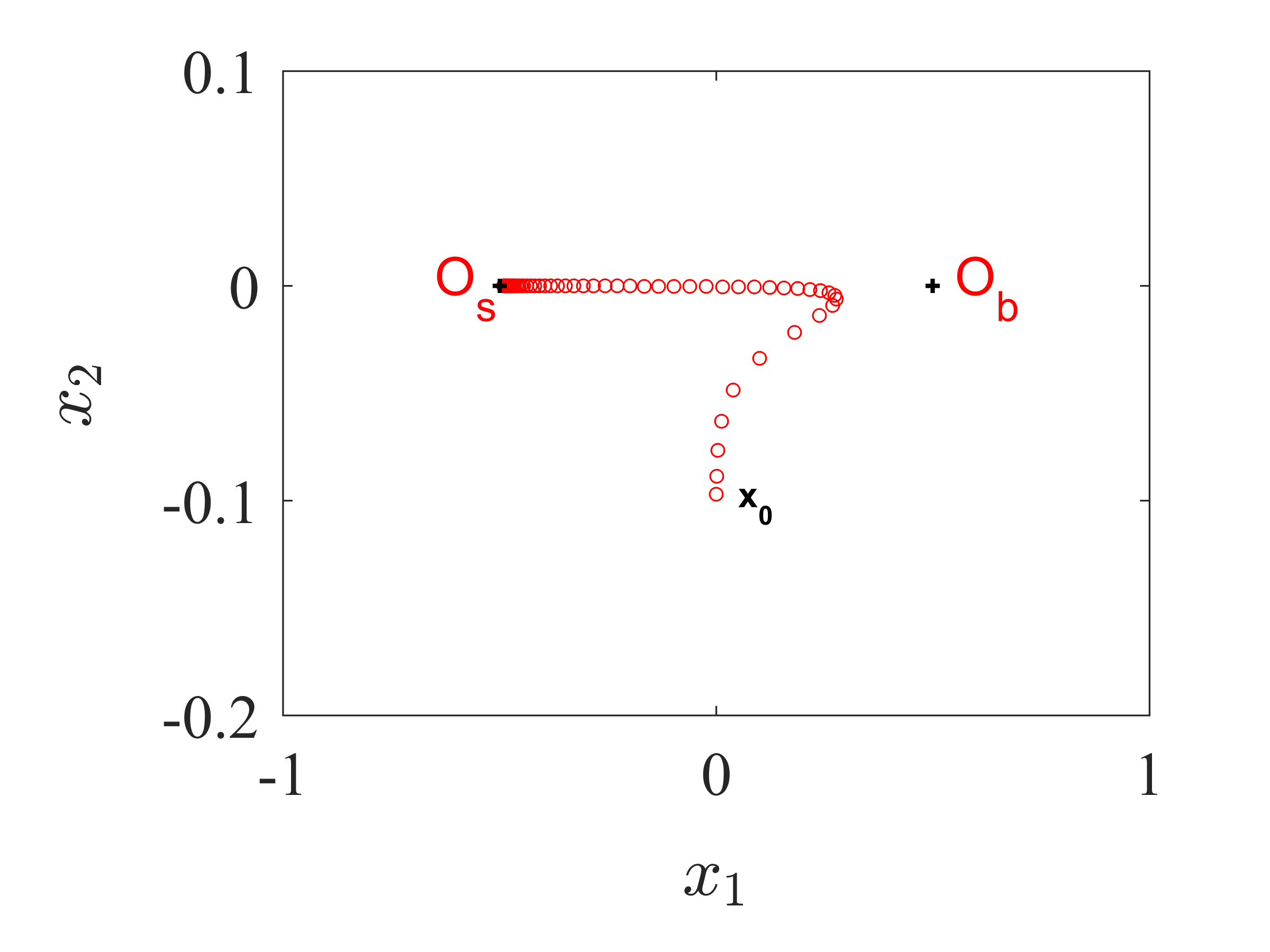} }
fig		\subfigure[Shape 2: $\Delta (x_1 m(x_1,x_2))$ changes sign on the axis $\{x_1=0\}$ ]{\includegraphics[width=0.33\textwidth]{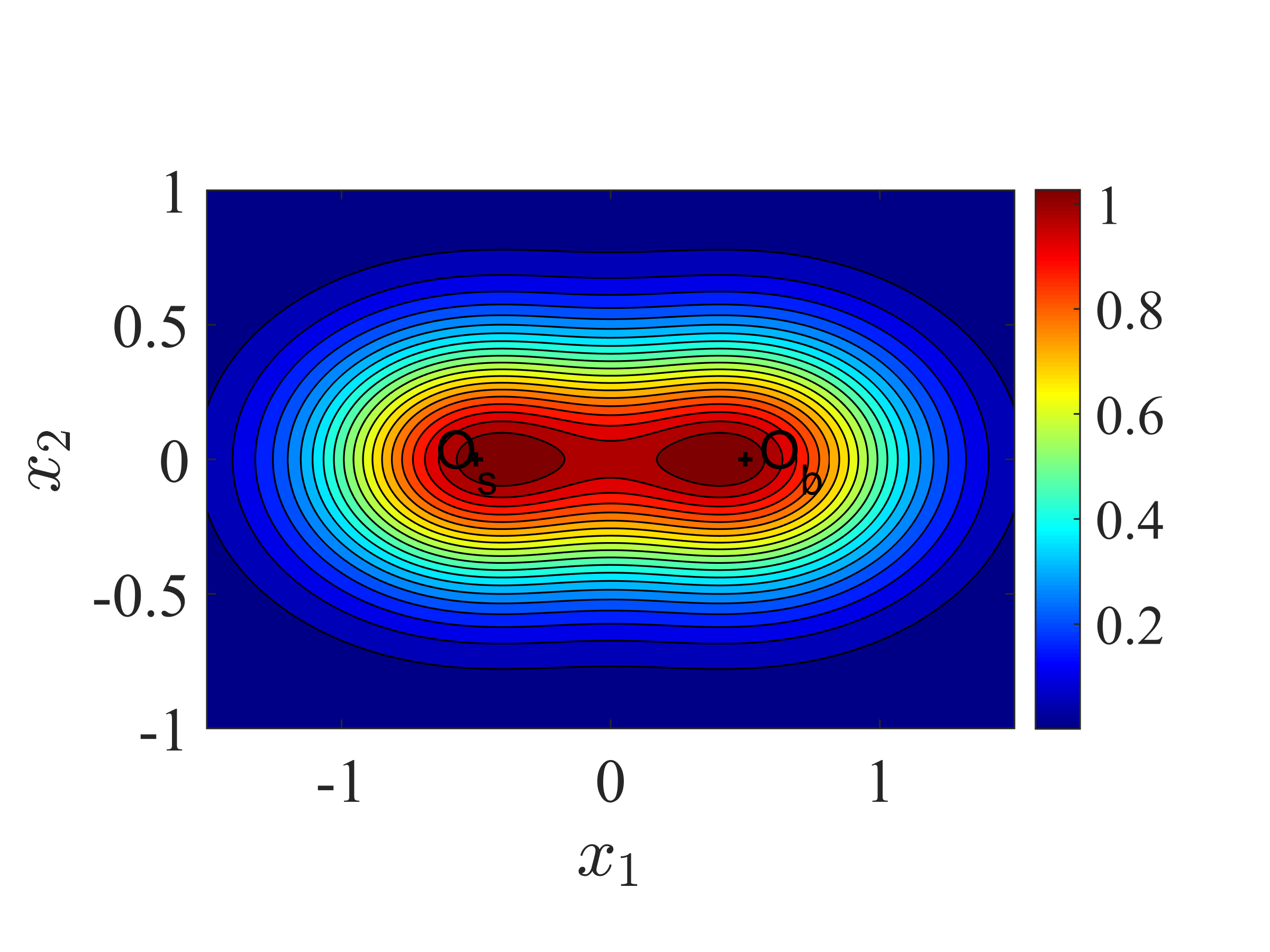}\includegraphics[width=0.33\textwidth]{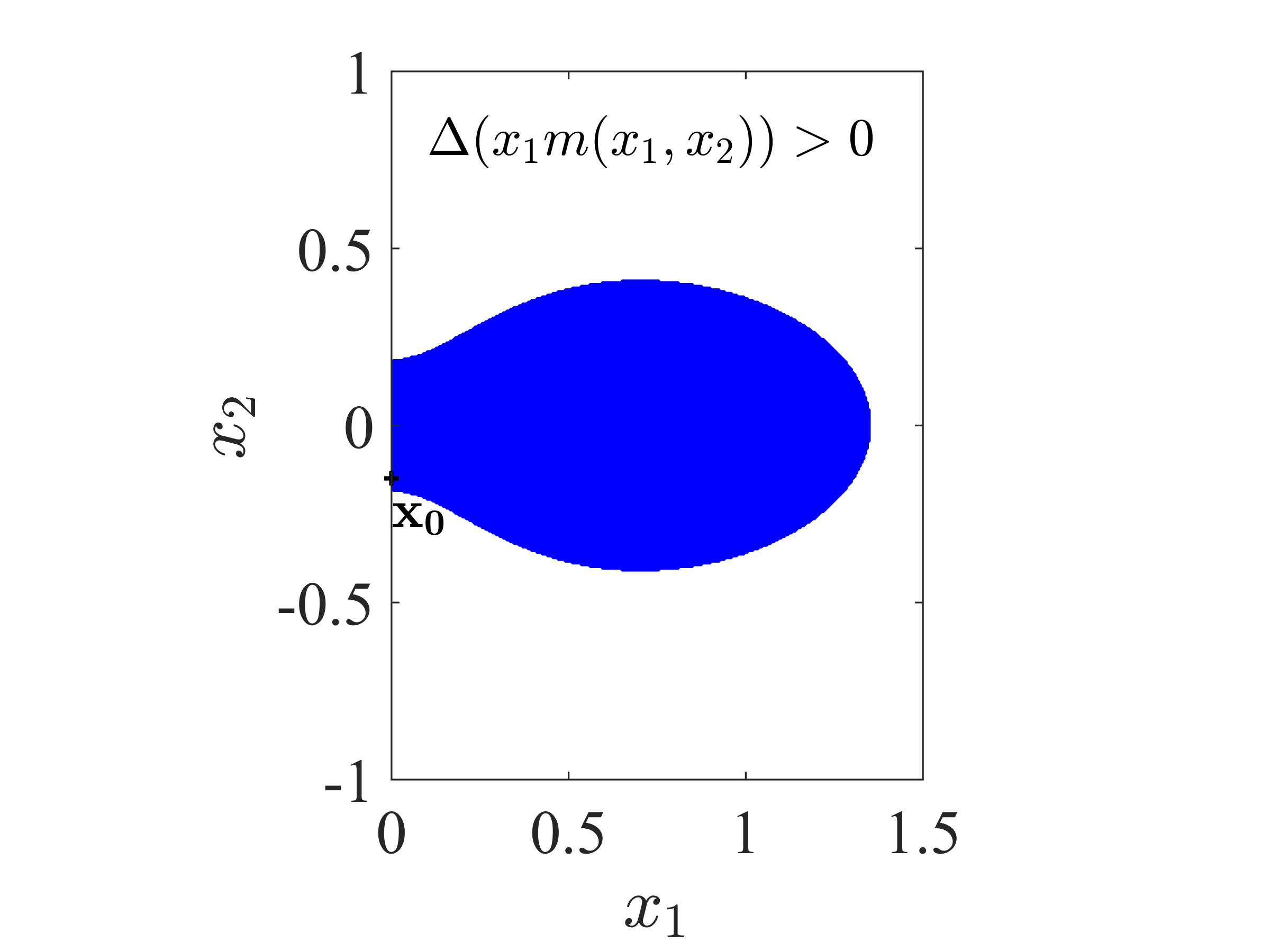} \includegraphics[width=0.33\textwidth]{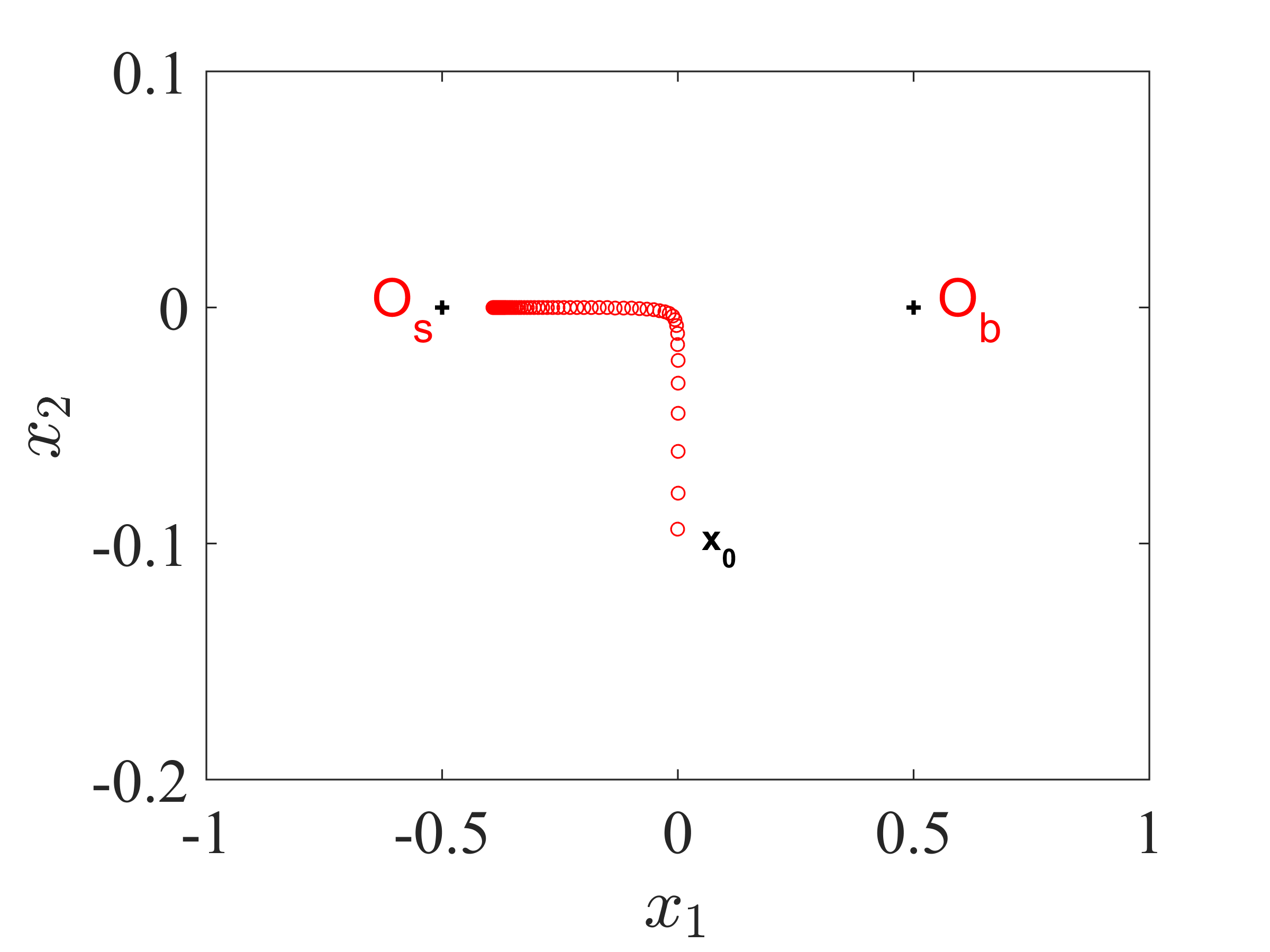}}
		\caption{{\bf Trajectory of adaptation with different shapes of the fitness function.} The left panels depict the fitness function. The central panels describe the sign of $\Delta (x_1 m(x_1,x_2))$ in the region $\{x_1>0\}$: this quantity is negative in the blue region and positive otherwise. The right panels depict the corresponding trajectories of the mean phenotype $\xb(t)$ obtained with the model \eqref{eq:main_model}, at successive times $t=0,1,\ldots,100$. In both cases, the initial condition is concentrated at $\x_0=(0,-0.1)$, leading to a positive sign of $\Delta (x_1 m(x_1,x_2))$ at $(x_1,x_2)=\x_0$ in the upper panels and a negative sign in the lower panels. The parameter values are the same as in Fig.~\ref{fig:traj_bdep}, except for the fitness function of the lower panel, where $\sigma_{x_1}^2=1/18$ and $\sigma_{x_2}^2=1/10$.}
		\label{fig:diff scenar}
	\end{figure}

%


\paragraph{Large time behaviour} We now analyse whether the convergence towards the survival optimum at large times observed in Fig.~\ref{fig:traj_bdep}a is a generic behaviour. In that respect, we focus on the stationary distribution $q_\infty$ associated with the model~\eqref{eq:main_model}. It satisfies equation
\begin{equation} \label{eq:statio1}
	D\, \Delta (b \, q_\infty) (\x)+m(\x)\, q_\infty(\x) = \mb_\infty \, q_\infty(\x),  \quad \x\in\Omega,
\end{equation}
for some $\mb_\infty \in \R$. Setting
\[
v(\x):=b(\x) \, q_\infty(\x),
\]
this reduces to a more standard eigenvalue problem, namely
	\begin{equation} \label{eq:statio2}
	D\, \Delta v (\x)+ \frac{m(\x)}{b(\x)} v(\x)= \mb_\infty \, \frac{1}{b(\x)} \,  v(\x),  \quad \x\in\Omega,
	\end{equation}
supplemented with Neumann boundaries conditions:
\begin{equation}\label{neumann}
\nabla v(\x)\cdot \nub(\x)=0, \quad x\in \partial \Omega.
\end{equation}
As the factor $1/b(\x)$ multiplying $\mb_\infty$ is strictly positive, we can indeed apply the standard spectral theory of  \cite{CouHil53} \citep[see also][]{CanCos03}. Precisely, there is a unique couple $(v(\x), \mb_\infty)$ satisfying \eqref{eq:statio2}-\eqref{neumann} (with the normalisation condition $\int_\Omega v(\x)d\x=1$) such that $v(\x)>0$ in $\Omega$. The `principal eigenvalue' $\mb_\infty$ is provided by the variational formula
	\begin{equation} \label{eq:variational2}
	\mb_\infty=\max_{\psi \in W^{1,2}(\Omega)} Q[\psi],
	\end{equation}
where $W^{1,2}(\Omega)$ is the standard Sobolev space and
\[
Q[\psi]:=\frac{-D\, \int_{\Omega} \|\nabla ( \psi \, \sqrt{b})\|^2 (\x) d\x +\int_{\Omega}  m(\x) \psi^2 (\x) d\x }{\int_{\Omega}  \psi^2 (\x) d\x }.
\]
An immediate consequence of formula \eqref{eq:variational2} is that $\mb_\infty$ is a decreasing function of the mutational parameter $D$. This means that, as expected, the mutation load increases when the mutational parameter is increased. 

We expect the stationary state to  `lean mainly on the left', meaning that the survival optimum is selected at large times, but deriving rigorously the precise shape of $q_\infty$ seems highly involved. Still,  formula~\eqref{eq:variational2} gives us some intuition. First,
multiplying~\eqref{eq:statio2} by $v$ and integrating, we observe that $Q[v/\sqrt{b}]=Q[\sqrt{b} \, q_\infty]=\mb_\infty$. Thus, formula~\eqref{eq:variational2} shows that the shape of $q_\infty$ should be such that $\psi=\sqrt{b} \, q_\infty$ maximizes the Rayleigh quotient $Q$. 

We thus consider each term of $Q$ separately. From Hardy-Littlewood-P\'olya rearrangement inequality, the term $\int_{\Omega}  m(\x) \psi^2 (\x) d\x$ is larger when $\psi$ is arranged like $m$, i.e., $\psi$ takes its largest values where $m$ is large and its smallest values where $m$ is small. Thus, this term tends to promote shapes of $\psi$ which look like $m$. The  other term $-D\, \int_{\Omega} \|\nabla ( \psi \, \sqrt{b})\|^2 (\x) d\x$ tends to promote functions $\psi$ which are proportional to $1/\sqrt{b}$. Finally, the stationary distribution $q_\infty$ should therefore realise a compromise between $1/b$ and $m/\sqrt{b}$. As both functions take their larger values when $b$ is small, we expect $q_\infty$ to be larger close to the survival optimum $\Oc_s$.

More rigorously, define $\tq(\x)=q(-x_1,x_2,\ldots,x_n)=q(\iota(\x))$. As $\sqrt{b} \, q_\infty$ realises a maximum of $Q$, we have \[Q[\sqrt{b} \, q_\infty]\ge Q[\sqrt{s} \, \tq_\infty].\] Recalling $s(\x)=b(\iota(\x))$ and using the symmetry of $m$, this implies that
\begin{equation}\label{eq:moral_ineq}
\int_{\Omega} \|\nabla (  \tq_\infty \sqrt{b\, s})\|^2 =  \int_{\Omega} \|\nabla (  q_\infty \sqrt{b\, s})\|^2 \ge \int_{\Omega} \|\nabla ( q_\infty \, b)\|^2.    
\end{equation}
Now, we illustrate that \textit{moralement} this gradient inequality means that the stationary distribution tends to be closer to $\Oc_s$ than to $\Oc_b$. In dimension $n=1$, assume that $b(x)=\exp(-(x-\beta)^2)$ and $s(x)=\exp(-(x+\beta)^2)$. Assume that the domain is large enough so that the integrals over $\Omega$ can be accurately approached by integrals over $(-\infty,+\infty)$. Among all functions of the form $h_\gamma(x)=\exp(-(x-\gamma)^2)$,	a straightforward computation reveals that
\[
\lp\int_{-\infty}^{+\infty}   [\p_x (h_\gamma \, \sqrt{b\, s})]^2 (x) \, dx \ge \int_{-\infty}^{+\infty}   [\p_x (h_\gamma \, b)]^2 (x) \, dx\rp \  \Leftrightarrow \ \gamma \leq - \beta/2,
\]
which means that the inequality \eqref{eq:moral_ineq} is satisfied by functions $h$ whose maximum is reached at a value $x=\gamma$ closer to $\Oc_s=-\beta$ than to $\Oc_b=\beta$.

\paragraph{Large mutation effects} This advantage of adaptation towards the survival optimum becomes more obvious when the mutation effects are large. We observed above that $\mb^D_\infty$ (seen here as a function of $D$) is decreasing. Moreover, from  \eqref{eq:variational2}, we have, for all $D>0$,
\[
\mb^D_\infty \geq Q[1/\sqrt b]=\lp\int_\Omega mb^{-1}  \rp \lp \int_\Omega b^{-1}\rp^{-1}.
\]
Thus $\mb^D_\infty$ admits a limit $\mb^\infty_\infty$ as $D\to \infty$. Moreover, the corresponding stationary states satisfy $\Delta (b \, q^D_\infty) (\x)+q^D_\infty(\x)\, (m(\x)-\mb^D_\infty)/D = 0$. Standard elliptic estimates and Sobolev injections imply that, up to the extraction of some subsequence $D_k\to \infty$, the functions $q^{D_k}_\infty$ converge, as $k\to\infty$, in $C^{2}(\Omega)$ to a nonnegative solution (with mass $1$) of $\Delta (b \, q^\infty_\infty) (\x) = 0$ \Rd{} with $\nabla (b \, q^\infty_\infty)(\x)\cdot \nub(\x)=0$ on $\p \Omega$. \Bk{} As such a solution is unique and given by:\[q^\infty_\infty(\x)=C/b(\x) \hbox{ with }C=\int_\Omega b^{-1},\]the whole sequence $q^D_\infty$ converges to  $C/b(\x)$ as $D\to \infty$. Thus, in order to reduce the mutation load, the phenotype distribution tends to get inversely proportional to $b$ in the large mutation regime.

\paragraph{An analytically tractable example}  Consider the following form for the birth rate, in dimension $n=1$:
	\begin{equation} \label{eq:b_constant_morceaux}
	b(x)=\baco{l}
	2 \qquad \hbox{ for }   x\in  (0,a),\\
	1 \qquad \hbox{ for }   x\in (-a, 0),\\
	-M \;\,\,  \hbox{ for }  x\not\in (-a, a).
	\eaco
	\end{equation}
With the assumptions~\eqref{eq:def_m=r+b+s} and~\eqref{eq:b_constant_morceaux}, we get:\[m(x)=3-r, \hbox{ for }x\in (-a,a),\]and $m(x)=-2\, M-r$ outside $(-a,a)$. Then, we consider the corresponding 1D eigenvalue problem \eqref{eq:statio1} in an interval $\Omega$ containing $(-a,a)$. Assuming that the phenotypes are extremely deleterious outside $(-a,a)$ (i.e., $M\gg 1$), we make the approximation $q_\infty(\pm a)=0$. \Rd{} Thus, the shapes of the birth and survival functions exactly compensate each other, so that the fitness function is constant in the interval $(-a,a)$. \Bk
In this case, we prove (see Appendix~\ref{proof explicit}) that
\[\ds\frac{\int_{-a}^0 q_{\infty}(x)\, dx}{\int_{0}^a q_{\infty}(x)\, dx}   > \frac{1}{2 \,\sqrt{2-\sqrt{2}}-2+\sqrt{2}}>1.\]In other word, the stationary distribution has a larger mass to the left of $0$ (where $s$ is larger) than to the right (where $b$ is larger). \Rd{} Thus, even with a constant fitness function and therefore in the absence of mutation load, adaptation tends to promote the high survival strategy, probably because of higher fluxes from the regions of the phenotype space with a high birth rate (and therefore a high mutation rate) to the regions with a high survival rate. More generally, it is easily observed that if $m(\x)$ is constant in \eqref{eq:statio1}, then $q_\infty(\x)$ is proportional to $1/b(\x)$ and therefore takes larger values when $b$ is small, showing that the high survival strategy is promoted at equilibrium. In Appendix~\ref{app:num_flat}, we illustrate this property and depict the dynamics of adaptation for a particular example of function $b$. \Bk

\section{Discussion}\label{sec conc}

We found that a positive dependence between the birth rate and the mutation rate emerges naturally at the population scale, from elementary assumptions at the individual scale.
Based on a large population limit of a stochastic individual-based model  in a small mutation variance regime  we derived a reaction-diffusion framework \eqref{eq:main_model} that describes the evolutionary trajectories and steady states in the presence of this dependence. We compared this approach with stochastic replicate simulations of finite size populations which showed a good agreement with the behaviour of the reaction-diffusion model. These simulations, and our analytical results on \eqref{eq:main_model} demonstrate that taking this dependence into account,  or conversely omitting it as in the standard model  \eqref{eq:q}, has far reaching consequences on the description of the evolutionary dynamics. In light of our results, we discuss below the causes and consequences of the positive dependence between the birth rate and the mutation rate.

\paragraph{Birth-dependent mutation rate: causes} Even though the probability of mutation per birth event $\Ui$ does not depend on the phenotype of the parent, and therefore on its fitness nor its birth rate, a higher birth rate implies more mutations per unit of time at the population scale. This holds true when mutations mainly occur during reproduction, which is the case for bacteria and viruses \citep{Van98,TruTre09}.
 The mathematical derivation of the standard model \eqref{eq:q}, that does not account for this dependence, generally relies on a weak selection assumption, which {\it de facto} implies a very mild variation of the birth rate with the phenotype.
More precisely, the mutation variance and the difference between birth rates and death rates should both be small and of comparable magnitude, uniformly over the phenotype space explored by the population (see Assumptions SE and WS in Section~\ref{sec:causes}).
This is usually achieved by assuming that the leading order in the birth rate does not depend on the phenotype. 
In such cases, the mutation rate can safely be assumed to be phenotype-independent at the population scale, even though it is positively correlated with the birth rate, as already observed in \citep{Hof85,BaaGab00}. When there is a single optimum, this weak selection regime is often relevant. In particular, a scaling of the phenotype space shows that taking small mutation effects is equivalent to having a weak selection. 
Thus, in a regime with small mutation variance, which is required for the diffusion approximation,  and with a single fitness optimum,  the models \eqref{eq:q} and \eqref{eq:main_model} should lead to very similar results. However, in a much more complex phenotype to fitness landscape with several optima, this approximation does not hold.
 In particular, if the birth rates at each optimum are very different from one another,  even with a small mutation variance, the mutation term $\Delta(b \, q)$ will be very different from one optimum to another. In such situations, our approach reveals that the model \eqref{eq:main_model} will be more relevant,  and lead to more accurate predictions of the behaviour of the individual-based model. 

An exception corresponds to organisms with non-overlapping generations: the simulations in Fig.~\ref{fig:traj_bdep}b indicate that even with a fitness function that strongly depends on the phenotype, the trajectories of adaptation are adequately described by the model \eqref{eq:q}. Species with non-overlapping generations include annual plants (but some overlap may exist due to seedbanks), many insect species \citep[e.g. processionary moths,][again some overlap may exist due to prolonged diapause]{RoqA15} and fish species \citep[such as some killifishes with annual life cycles,][]{TurWri15}.

\paragraph{Birth-dependent mutation rate: consequences} When the model \eqref{eq:main_model} is coupled with a phenotype to fitness landscape with two optima, one for birth, the other one for survival, a new trade-off arises in the population. Compared to the standard approach \eqref{eq:q}, the symmetry between birth and survival is broken. Thus, in a perfectly symmetric situation (symmetric initial condition and fitness function), our analytical results and numerical simulations show new nontrivial strategies for the trajectories of mean phenotype and for the stationary phenotype distribution. These new strategies are in sharp contrast with those displayed for the standard model \cref{eq:q} for which  the two optima are perfectly equivalent. With the model \eqref{eq:main_model}, we obtained trajectories of adaptation where the mean phenotype of the population is initially attracted by the birth optimum, but eventually converges to the survival optimum, following a hook-shaped curve (see Fig.~\ref{fig:traj_bdep}). It is well-known that increasing the  mutation rate has antagonistic effects  on adaptation in the FGM (and other models with both deleterious and beneficial mutations) as it generates fitness variance to fuel and speed-up adaptation \citep{LavMar20} but lowers the mean fitness by creating a larger mutation load \citep[e.g.][]{AncLam19}. Here, these two effects shape the trajectories of adaptation. When far for equilibrium, the phenotypes with a higher mutation rate tend to be advantaged, leading to trajectories of mean phenotype that are initially biased toward the birth optimum. Then, in a second time, adaptation promotes the survival optimum, as it is associated with a lower mutation load. 

In our study, the birth and survival functions have the same height and width, so that the resulting fitness landscape $m(\x)$ is double peaked and symmetric. We chose this particular landscape in order to avoid trivial advantageous effect for one of the two strategies, and to check if an asymmetrical behaviour can emerge from a symmetric fitness landscape. Of course, if the birth optimum corresponds to a much higher fitness than the survival optimum, we expect that at large times the mean phenotype will converge to the birth optimum. However, our approach shows the tendency of the trajectory to be attracted by the survival optimum, which clearly shows up in the symmetric case considered here, and remains true in intermediate situations, as observed in Fig.~\ref{fig:gamma}. More precisely, in an asymmetric double peaked fitness landscape where the two peaks have different height, we observed that having a high survival rate remains more advantageous at equilibrium than having a high birth rate as long as the difference between the fitness peaks remains lower than the difference between the mutation loads generated at each optimum (see Appendix~\ref{app:asym}).

Another feature of the model \eqref{eq:main_model} is that the transient trajectory of mean fitness displays plateaus, as observed for instance in Fig.~\ref{fig:traj_bdep}. This phenomenon of several epochs in adaptation is well documented thanks to the longest ever evolution experiment, the `Long Term Evolution Experiment' (LTEE). Experimenting on \textit{Escherichia coli} bacteria,  \cite{WisRib13} found out that even after more than $70,000$ generations, fitness had not reached its maximum, apparently challenging the very existence of such a maximum, the essence of Fisher's Geometrical Model. It was then argued that the data could be explained by a two epoch model \citep{GooDes15}, with or without saturation. A similar pattern was observed for a RNA virus \citep{NovDua95}. Recently, \cite{HamLav20} showed that the FGM with a single optimum but anisotropic mutation effects also leads to plateaus, and they obtained a good fit with the LTEE data. Our study shows that, when coupled with a phenotype to fitness landscape with two optima, the model \eqref{eq:main_model} is also a possible candidate to explain these trajectories of adaptation.

\paragraph{The model \eqref{eq:main_model} in the mathematical literature} Some authors have already considered operator which are closely related to the mutation operator in \eqref{eq:main_model}.  For instance \cite{LorMir11} considered non-homogeneous operators of the form $\mathcal B(q)(t,\x) = \text{ div} (b(\x) \nabla q(t,\x))$ within the framework of constrained Hamilton-Jacobi equations.	However this operator does not emerge as the limit of a microscopic diffusion process or as an approximation of an integral mutation operator. It is more adapted to the study of heat conduction as it notably tends to homogenise the solution compared to the Fokker-Planck operator $ \Delta( b(\x) q(t,\x) )$, see Figure II.7 in \cite{Roq13}. Finally, the flexible framework of \cite{burger2000mathematical} allows for heterogeneous mutation rate. Due to the complicated nature of the operator involved (compact or power compact kernel operator), the theoretical framework is in turn very intricate.
Quantitative results are in consequence either relatively few, and typically consist in existence and uniqueness of solutions, upper or lower bounds on the asymptotic mean fitness \citep{Bur98,burger2000mathematical},  or concern simpler models (with a discretisation of the time or of the phenotypic space), see \citep{HerRed02,Red04,Hof85}.

\paragraph{Sexual reproduction}  How to take into account a phenotype dependent birth rate with a sexual mode of reproduction is an open question to the best of our knowledge. A classical operator to model sexual genetic inheritance in the background adopted in this article is the \textit{infinitesimal operator}, introduced by  \cite{Fisher1918}, see \cite{slatkin1970selection,cotto-ronce} or the review of \cite{turelli_comm}. It describes a trait deviation of the offspring around the mean of the phenotype of the parents, drawn from a Gaussian distribution. Mathematically, few studies have tackled the operator, with the notable only exceptions of the derivation from a microscopic point of view of \cite{weberinfinitesimal}, the small variance and stability analysis of \cite{spectralsex,patout2020cauchy}, and  in \cite{raoulmirrahimiinfinitesimal,Raoul}, with an additional spatial structure, the convergence of the model towards the Kirkpatrick-Barton model when the reproduction rate is large. In all those cases, the reproduction term is assumed to be constant.  With the formalism of \eqref{eq:modUcst},  at the population scale, mating and birth should be positively correlated, which should  lead to considering the following variation on the infinitesimal operator, which acts upon the phenotype $x \in \R$  (for simplicity, we take $n=1$ here for the dimension of the phenotype):
\begin{multline}\label{sexop}
\mathcal{S} (f)(x) := \\ \dfrac{1}{\sigma \sqrt{\pi}}   \iint_{\mathbb{R}^{2}}  \exp\left[ -  \dfrac{1}{\sigma^2} \left( x - \dfrac{ x_1 +   x_2}2 \right)^2 \right ]  b(x_1) f( x_1)\dfrac{  \omega(x_2) f(  x_2)}{ \int_{\R}  \omega(x_2') f(  x_2')\, d  x_2'}\, d  x_1 d  x_2.
\end{multline}
We try to explain this operator as follows. It describes how an offspring with trait $x$ appears in the population. First, an individual $x_1$ rings a birth clock, at a rate given by its trait and the distribution of birth events $b$, as in \eqref{bhetero}. Next, this individual mates with a second parent $x_2$, chosen according to the weight  $\omega$.  Then, the trait of the offspring is drawn from the normal law $\mathcal{N} \left(\frac{x_1 + x_2}{2}, \frac{\sigma^2}2 \right)$.

As the birth rate of individuals seems a decisive factor in being chosen as a second parent, a reasonable choice would be $\omega = b$ in the formula above.  Again, to the best of our knowledge, no mathematical tools have been developed to tackle the issues we raise in this article with this new operator. We can mention the recent work \cite{raoul2021exponential} about similar operators.

A new trade-off, similar to the one discussed in this article, can also arise with the operator \eqref{sexop}. Indeed, coupled with a  selection term, as in \eqref{bhetero} for instance, a trade-off between birth and survival can appear if $b$ (or $\omega$) and $d$ have different optima. In such case, a natural question is whether the effects highlighted in this paper for asexual reproduction are still present, and when, with sexual reproduction.
Of course, a third factor in the trade-off is also present, through the weight of the choice of the second parent via the function $\omega$. If an external factor favours a second parent around a third optimum, then the effect it has on the population should also be taken into account.  
The relevance of such a model in an individual based setting, as in \Cref{sec:causes} is also an open question to this day for the operator \eqref{sexop}. 
 With the assumption $\omega = b$,  the roles of first and second parents are symmetric in the operator \eqref{sexop}, and an investigation of the balance between birth and survival could be carried out without  additional assumptions.

\section*{Acknowledgements}
This work was supported by the French Agence Nationale de la Recherche (ANR-18-CE45-0019 `RESISTE'). We thank Guillaume Martin for many fruitful discussions.

\section*{Declarations of interest: none} 

\section*{References}

\bibliography{biblio_lionel,test}

\clearpage

\centerline{\textbf{APPENDICES}}

\setcounter{figure}{0}
\setcounter{section}{0}
\renewcommand{\thesection}{\Alph{section}}
\renewcommand{\thesubsection}{\thesection.\arabic{subsection}}
\renewcommand{\thefigure}{\thesection.\arabic{figure}}

\section{Proofs \label{app:proof}}


\subsection{Proof of Proposition~\ref{prop:cvg_non_overlapping}}\label{proof firstapp}

	For $ \phi : \Omega \to \R $ measurable and bounded, let
	\begin{equation*}
		P\phi(\x) = (1-U) \phi(\x_i) + U \int_{\Omega} \phi(\y) \rho_K(\x_i,\y)d\y.
	\end{equation*}
	
	\begin{lem} \label{lemma:martingale}
		For $t \in \N$, for any $\phi : \Omega \to \R$ measurable and bounded,
		\begin{equation*}
		\langle \nu^K_t, \phi \rangle = \langle \nu^K_0, \phi \rangle + \sum_{s=1}^{t} \langle \nu^K_s, w_K e^{-c_K K \langle \nu^K_s, 1 \rangle} P \phi - \phi \rangle + M_t^K(\phi),
		\end{equation*}
		where $(M_t^K(\phi), t \geq 0)$ is a local martingale with quadratic variation
		\begin{equation*}
		\sum_{s=1}^{t} \left( \frac{1}{K} \langle \nu^K_s, w_K P(\phi^2) \rangle e^{-c_K K \langle \nu^K_s, 1 \rangle} + \langle \nu^K_s, w_K e^{-c_K K \langle \nu^K_s, 1 \rangle} P\phi - \phi \rangle^2 \right).
		\end{equation*}
	\end{lem}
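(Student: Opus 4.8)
The plan is to obtain the decomposition from the one-step transition of the Markov chain $(\nu^K_t)_{t\in\N}$ together with the elementary (discrete-time) Doob decomposition. Fix a bounded measurable $\phi:\Omega\to\R$ and condition on the filtration $\mathcal F_t=\sigma(\nu^K_0,\dots,\nu^K_t)$, writing $\nu^K_t=\frac1K\sum_{i=1}^{N_t}\delta_{\x_i}$ with $N_t=K\langle\nu^K_t,1\rangle$. By the description of the model, given $\mathcal F_t$ the individuals reproduce independently: individual $i$ produces a $\mathrm{Poisson}(w_K(\x_i))$ number of offspring, each offspring survives independently with probability $e^{-c_KN_t}$, and each surviving offspring independently either keeps the phenotype $\x_i$ (probability $1-U$) or draws one from $\rho_K(\x_i,\cdot)$ (probability $U$). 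Hence, conditionally on $\mathcal F_t$, Poisson thinning gives that the number $O_i$ of surviving offspring of $i$ is $\mathrm{Poisson}(w_K(\x_i)e^{-c_KN_t})$, and the phenotype $Y_{ij}$ of the $j$-th such offspring has law $(1-U)\delta_{\x_i}+U\rho_K(\x_i,\cdot)$, so that $\mathbb E[\phi(Y_{ij})\mid\mathcal F_t]=P\phi(\x_i)$ and $\mathbb E[\phi(Y_{ij})^2\mid\mathcal F_t]=P(\phi^2)(\x_i)$. Summing $\phi$ over the offspring of all $N_t$ parents and using $N_t=K\langle\nu^K_t,1\rangle$ gives
\[
  \mathbb E\bigl[\langle\nu^K_{t+1},\phi\rangle \mid \mathcal F_t\bigr]
  =\bigl\langle\nu^K_t,\; w_K\,e^{-c_KK\langle\nu^K_t,1\rangle}\,P\phi\bigr\rangle .
\]
Subtracting $\langle\nu^K_t,\phi\rangle$ identifies the one-step mean increment, and telescoping over the generations (equivalently, the Doob decomposition of the adapted integrable sequence $(\langle\nu^K_t,\phi\rangle)_t$) produces the drift term of the statement and defines $M_t^K(\phi)$ as the remainder. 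By construction each increment $M_s^K(\phi)-M_{s-1}^K(\phi)$ is $\mathcal F_s$-measurable with zero conditional mean given $\mathcal F_{s-1}$, so $(M_t^K(\phi))_{t\ge0}$ is a local martingale, localising with $\tau_m=\inf\{t:\langle\nu^K_t,1\rangle>m\}$; it is a genuine martingale once $\mathbb E[\langle\nu^K_0,1\rangle]<\infty$, since then $\mathbb E[\langle\nu^K_t,1\rangle]\le\|w_K\|_\infty^t\,\mathbb E[\langle\nu^K_0,1\rangle]<\infty$ and all terms are integrable.

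For the quadratic variation I would compute the conditional second moment of the raw increment $\langle\nu^K_s,\phi\rangle-\langle\nu^K_{s-1},\phi\rangle$ given $\mathcal F_{s-1}$. Conditionally on $\mathcal F_{s-1}$ the contributions $S_i:=\sum_{j=1}^{O_i}\phi(Y_{ij})$ of the $N_{s-1}$ parents are independent, and each $S_i$ is a compound Poisson sum, for which the identity $\mathrm{Var}(\sum_{j=1}^{O}X_j)=\lambda\,\mathbb E[X^2]$ (valid for $O\sim\mathrm{Poisson}(\lambda)$ and i.i.d.\ $X_j$) gives $\mathrm{Var}(S_i\mid\mathcal F_{s-1})=w_K(\x_i)e^{-c_KN_{s-1}}P(\phi^2)(\x_i)$. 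Dividing by $K^2$, summing over $i$, and factoring the scalar $e^{-c_KN_{s-1}}$ out of the bracket yields
\[
  \mathrm{Var}\bigl(\langle\nu^K_s,\phi\rangle \mid \mathcal F_{s-1}\bigr)
  =\frac1K\,\langle\nu^K_{s-1},\,w_K\,P(\phi^2)\rangle\,e^{-c_KK\langle\nu^K_{s-1},1\rangle},
\]
which is the first term in the statement. The second term, $\langle\nu^K_{s-1},\,w_Ke^{-c_KK\langle\nu^K_{s-1},1\rangle}P\phi-\phi\rangle^2$, is exactly the square of the one-step mean increment computed in the first paragraph, and it appears because the bracket there is read off the \emph{raw} increments of $s\mapsto\langle\nu^K_s,\phi\rangle$: the summand equals $\mathbb E[(M_s^K(\phi)-M_{s-1}^K(\phi))^2\mid\mathcal F_{s-1}]$ plus the square of the one-step drift, i.e.\ a conditional variance plus a squared conditional-mean displacement. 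Summing over $s$ from $1$ to $t$ gives the announced expression.

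The computations are routine and the proof has no genuine obstacle; the points deserving care are the correct reading of the one-step transition (Poisson offspring, then Poisson thinning by the survival probability, then per-offspring mutation, in that order), the combination of conditional independence across the $N_{s-1}$ parents with the compound-Poisson second-moment identity for each parent, and the localisation needed to justify the (local) martingale property without moment bounds on the population size. I expect the conditional second-moment bookkeeping to be the step most prone to algebraic slips, but it presents no conceptual difficulty.
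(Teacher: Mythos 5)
Your proof is correct and follows essentially the same route as the paper: compute the one-step conditional mean via Poisson thinning to get the drift, define $M^K_t(\phi)$ by the Doob decomposition, and obtain the bracket from the conditional second moment of the raw increment. The only cosmetic difference is that you apply the compound-Poisson identity $\mathrm{Var}(\sum_{j=1}^{O}X_j)=\lambda\,\mathbb{E}[X^2]$ directly to $\sum_j\phi(Y_{ij})$, whereas the paper splits the increment into the mutation-randomness and offspring-number parts and adds their (uncorrelated) variances — the arithmetic is identical.
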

	
	\begin{proof}
		From the definition of the model, for any $\phi: \Omega \to \R$ measurable and bounded,
		\begin{linenomath}
\begin{align*}
     \mathbb{E}\left[ \left. \langle \nu^K_{t+1}, \phi \rangle \right| \nu^K_t \right] &= \frac{1}{K} \sum_{i=1}^{N_t} w_K(\x_i)\, e^{-c_K N_t} \left\lbrace (1-U)\phi(\x_i) + U \int_\Omega \phi(\y) \rho_K(\x_i, d\y) \right\rbrace \\
		&= \langle \nu^K_t, w_K e^{-c_K K \langle \nu^K_t, 1 \rangle} P\phi \rangle.
\end{align*}
\end{linenomath}Hence
		\begin{equation} \label{first_moment_increment}
			\mathbb{E}\left[ \left. \langle \nu^K_{t+1}, \phi \rangle - \langle \nu^K_t, \phi \rangle \right| \nu^K_t \right] = \langle \nu^K_t, w_K e^{-c_K K \langle \nu^K_t, 1 \rangle} P\phi - \phi \rangle.
		\end{equation}
		We now wish to compute
		\begin{equation*}
		\mathbb{E}\left[ \left. \left( \langle \nu^K_{t+1},\phi \rangle - \langle \nu^K_t,\phi \rangle \right)^2 \right| \nu^K_t \right].
		\end{equation*}
		To do this, let $\nu^K_t = \frac{1}{K} \sum_{i=1}^{N} \delta_{\x_i}$ and let $N_i$ be the number of offspring of individual $i$ at time $t+1$ and let $(Y_{i,j}, 1 \leq j \leq N_i)$ denote their types.
		From the definition of the model, $N_i$ is a Poisson random variable with parameter $w_K(\x_i) e^{-c_K N}$ and the $(Y_{i,j}, j \geq 1)$ are i.i.d. with
		\begin{linenomath}
\begin{align*}
     \mathbb{E}[\phi(Y_{i,j}) \,|\, \x_i] = P\phi(\x_i), && \mathbb{V}[\phi(Y_{i,j}) \,|\, \x_i] = P(\phi^2)(\x_i) - (P\phi(\x_i))^2.
\end{align*}
\end{linenomath}
		Then we write
		\begin{multline*}
		\langle \nu^K_{t+1},\phi \rangle - \langle \nu^K_t, \phi \rangle = \frac{1}{K} \sum_{i=1}^{N} \left( \sum_{j=1}^{N_i} \left( \phi(Y_{i,j}) - P\phi(\x_i) \right) \right) \\+ \frac{1}{K} \sum_{i=1}^{N} \left( N_i - w_K(\x_i) e^{-c_K N} \right) P\phi(\x_i)  + \langle \nu^N_t, w_K e^{-c_K N} P\phi - \phi \rangle.
		\end{multline*}
		Since the third term depends only on $\nu^N_t$ and the first two terms are uncorrelated,
		\begin{multline*}
		\mathbb{E}\left[ \left. \left( \langle \nu^K_{t+1},\phi \rangle - \langle \nu^K_t,\phi \rangle \right)^2 \right| \nu^K_t \right] = \frac{1}{K^2} \sum_{i=1}^{N} w_K(\x_i) e^{-c_K N} \left(P(\phi^2)(\x_i) - (P\phi(\x_i))^2\right)\\ + \frac{1}{K^2} \sum_{i=1}^{N} w_K(\x_i) e^{-c_K N} (P\phi(\x_i))^2 + \langle \nu^K_t, w_K e^{-c_K N} P\phi - \phi \rangle^2.
		\end{multline*}
		Rearranging, we arrive at
		\begin{multline} \label{second_moment_increment}
		\mathbb{E}\left[ \left. \left( \langle \nu^K_{t+1},\phi \rangle - \langle \nu^K_t,\phi \rangle \right)^2 \right| \nu^K_t \right] \\ = \frac{1}{K} \langle \nu^K_t, w_K P(\phi^2) \rangle e^{-c_K K \langle \nu^K_t, 1 \rangle} + \langle \nu^K_t, w_K e^{- c_K K \langle \nu^K_t, 1 \rangle}  P\phi - \phi \rangle^2.
		\end{multline}
		This concludes the proof of the lemma.
	\end{proof}

    Note that, setting
    \begin{equation*}
    \mutx_K\phi(\x) = \frac{U}{\varepsilon_K} \int_\Omega (\phi(\y)-\phi(\x))\rho_K(\x,\y)d\y,
    \end{equation*}
    equation \eqref{first_moment_increment} can also be written
	\begin{multline*}
		\mathbb{E}\left[ \left. \langle \nu^K_{t+1}, \phi \rangle - \langle \nu^K_t, \phi \rangle \right| \nu^K_t \right] \\ = \langle \nu^K_t, w_K\, e^{-c_K K \langle \nu^K_t, 1 \rangle}\, \varepsilon_K \mutx_K \phi + (w_K\, e^{-c_K K \langle \nu^K_t, 1 \rangle}-1)\, \phi \rangle.
	\end{multline*}
		Using Assumption (A1), we then have
		\begin{equation*}
		\mathbb{E}\left[ \left. \langle \nu^K_{t+1}, \phi \rangle - \langle \nu^K_t, \phi \rangle \right| \nu^K_t \right] = \varepsilon_K \langle \nu^K_t, \mutx_K\phi + (m - c \langle \nu^K_t, 1 \rangle) \phi \rangle + o(\varepsilon_K \langle \nu^K_t, 1\rangle).
		\end{equation*}
		We then note that, in the case of Assumption (FE), $\mutx_K\phi = \mutx\phi$, while in the case of Assumption (SE), by a Talyor expansion, for any $\phi \in C^2_0(\Omega)$,
		\begin{equation*}
		\mutx_K \phi(\x) = \frac{\lambda U}{2} \Delta \phi(\x) + o(1),
		\end{equation*}
		uniformly in $\x\in \Omega$.
		Finally, note that the first term in \eqref{second_moment_increment} is of the order of $ 1/K $ while the second term is of the order of $ \varepsilon_K^2 $.
	
	For $N \geq 1$, define a stopping time $ \tau^K_N $ by
	\begin{equation*}
	\tau^K_N = \inf \lbrace t \geq 0 : \langle \nu^K_t, 1 \rangle \geq N \rangle.
	\end{equation*}
	
	\begin{lem} \label{lemma:cvg_martingale}
		For any fixed $ N \geq 1 $ and $ T > 0 $, for any $ \phi \in C^2_0(\Omega) $,
		\begin{equation*}
			\sup_{0 \leq t \leq \lfloor T/\varepsilon_K \rfloor} | M^K_{t \wedge \tau^K_N}(\phi) | \longrightarrow 0,
		\end{equation*}
		in probability as $ K \to \infty $.
	\end{lem}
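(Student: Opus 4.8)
\textbf{Proof proposal for Lemma~\ref{lemma:cvg_martingale}.}
The plan is to apply Doob's $L^2$ maximal inequality to the stopped martingale $(M^K_{t\wedge\tau^K_N}(\phi))_{t\in\N}$ and then show that the expectation of its quadratic variation at time $\lfloor T/\varepsilon_K\rfloor\wedge\tau^K_N$ vanishes as $K\to\infty$. Since, after stopping at $\tau^K_N$, the population mass stays bounded (up to a controlled overshoot, see below), $M^K_{\cdot\wedge\tau^K_N}(\phi)$ is a genuine $L^2$ martingale, and Doob's inequality yields
\[
\mathbb{E}\Bigl[\sup_{0\le t\le\lfloor T/\varepsilon_K\rfloor}\bigl|M^K_{t\wedge\tau^K_N}(\phi)\bigr|^2\Bigr]\le 4\,\mathbb{E}\bigl[\langle M^K(\phi)\rangle_{\lfloor T/\varepsilon_K\rfloor\wedge\tau^K_N}\bigr].
\]
Convergence to $0$ of the right-hand side therefore implies the claimed convergence in probability (indeed in $L^2$). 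Using the expression for the quadratic variation in Lemma~\ref{lemma:martingale}, it remains to control the sum over $1\le s\le\lfloor T/\varepsilon_K\rfloor\wedge\tau^K_N$ of the two families of terms: the ``sampling'' terms $\tfrac1K\langle\nu^K_s,w_K P(\phi^2)\rangle e^{-c_K K\langle\nu^K_s,1\rangle}$ and the ``drift-square'' terms $\langle\nu^K_s,w_K e^{-c_K K\langle\nu^K_s,1\rangle}P\phi-\phi\rangle^2$.

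For the first family I would use that, by Assumption (WS'), $w_K\le e^{\varepsilon_K\|m\|_\infty}\le C$ uniformly, that $0\le P(\phi^2)\le\|\phi\|_\infty^2$, and that $e^{-c_K K\langle\nu^K_s,1\rangle}\le 1$; hence each such term is at most $\tfrac{C}{K}\langle\nu^K_s,1\rangle$. On the set $\{s\le\tau^K_N\}$ one has $\langle\nu^K_s,1\rangle< N$ for $s<\tau^K_N$, so, taking expectations and summing over the $\lfloor T/\varepsilon_K\rfloor$ generations, this family contributes $O\!\bigl(\tfrac{NT}{K\varepsilon_K}\bigr)=O\!\bigl(NT\,K^{\eta-1}\bigr)$, which tends to $0$ since $0<\eta<1$. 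For the second family, the key is the identity already derived above,
\[
w_K e^{-c_K K\langle\nu^K_s,1\rangle}P\phi-\phi = w_K e^{-c_K K\langle\nu^K_s,1\rangle}\,\varepsilon_K\mutx_K\phi + \bigl(w_K e^{-c_K K\langle\nu^K_s,1\rangle}-1\bigr)\phi.
\]
Since $w_K=1+O(\varepsilon_K)$ and $c_K K\langle\nu^K_s,1\rangle=\varepsilon_K c\langle\nu^K_s,1\rangle$, we get $w_K e^{-c_K K\langle\nu^K_s,1\rangle}-1=O\!\bigl(\varepsilon_K(1+\langle\nu^K_s,1\rangle)\bigr)$; and by the Taylor expansion under Assumption (SE), for $\phi\in C^2_0(\Omega)$ the function $\mutx_K\phi$ is bounded uniformly in $\x$ (it converges to $\tfrac{\lambda U}{2}\Delta\phi$). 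Therefore $w_K e^{-c_K K\langle\nu^K_s,1\rangle}P\phi-\phi=O\!\bigl(\varepsilon_K(1+\langle\nu^K_s,1\rangle)\bigr)$ uniformly in $\x$, so
\[
\langle\nu^K_s,w_K e^{-c_K K\langle\nu^K_s,1\rangle}P\phi-\phi\rangle^2=O\!\bigl(\varepsilon_K^2\,\langle\nu^K_s,1\rangle^2(1+\langle\nu^K_s,1\rangle)^2\bigr).
\]
On $\{s\le\tau^K_N\}$ this is $O(\varepsilon_K^2)$ uniformly (with the constant depending on $N$), and summing over the $\lfloor T/\varepsilon_K\rfloor$ generations the second family contributes $O(\varepsilon_K T)\to 0$. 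Combining the two bounds gives $\mathbb{E}\bigl[\langle M^K(\phi)\rangle_{\lfloor T/\varepsilon_K\rfloor\wedge\tau^K_N}\bigr]\to 0$, which closes the argument.

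The main technical obstacle is the single generation at the stopping time, where $\langle\nu^K_{\tau^K_N},1\rangle$ may overshoot $N$. I would handle this by a short separate estimate: if $\tau^K_N\ge 1$ then the parent population at generation $\tau^K_N-1$ has fewer than $NK$ individuals, each producing an independent Poisson number of offspring with parameter at most $C$, so $\langle\nu^K_{\tau^K_N},1\rangle$ is stochastically dominated by $K^{-1}$ times a sum of $NK$ i.i.d.\ Poisson$(C)$ variables; hence $\mathbb{E}\bigl[\langle\nu^K_{\tau^K_N},1\rangle^p\bigr]\le C_p\,N^p+o(1)$ for the few exponents $p\le 4$ that appear, uniformly in $K$ (the case $\tau^K_N=0$ being excluded for $K$ large by the convergence $\nu^K_0\to f_0$, choosing $N>\langle f_0,1\rangle$, or handled by a mild moment bound on $\nu^K_0$). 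Everything else is routine bookkeeping with the scalings $\varepsilon_K=K^{-\eta}$ and $c_K=c\,\varepsilon_K/K$.
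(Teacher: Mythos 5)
Your proposal is correct and follows essentially the same route as the paper's proof: Doob's $L^2$ maximal inequality for the stopped martingale, the bound $O(N/(\varepsilon_K K))$ for the sampling terms, and the decomposition of the drift into $\varepsilon_K \mathcal{M}_K\phi$ (bounded uniformly for $\phi \in C^2_0(\Omega)$ under (SE)) plus $(w_K e^{-c_K K\langle\nu^K_s,1\rangle}-1)\phi = O(\varepsilon_K(1+\langle\nu^K_s,1\rangle))$, yielding $O(\varepsilon_K^2)$ per generation. Your explicit treatment of the possible overshoot of the mass at the stopping time is a point the paper's proof passes over silently, but it does not change the argument.
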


    \begin{proof}
    	By Doob's martingale inequality,
    	\begin{multline} \label{doob}
    		\mathbb{E}\left[ \sup_{0 \leq t \leq \lfloor T/\varepsilon_K \rfloor} | M^K_{t \wedge \tau^K_N}(\phi) |^2 \right] \\
    		\begin{aligned}
    		&\leq 4\, \mathbb{E}\left[ |M^K_{\lfloor T/\varepsilon_K \rfloor \wedge \tau^K_N}(\phi)|^2 \right] \\
    		&\leq 4\, \mathbb{E}\left[ \sum_{s=1}^{\lfloor T/\varepsilon_K \rfloor \wedge \tau^K_N} \left\lbrace \frac{1}{K} \langle \nu^K_s, w_K P\phi^2 \rangle + \langle \nu^K_s, w_K e^{-c \varepsilon_K \langle \nu^K_s, 1 \rangle} P\phi-\phi\rangle^2 \right\rbrace \right].
    		\end{aligned}
    	\end{multline}
    	Clearly, for $ 0 \leq s \leq \lfloor T/\varepsilon_K \rfloor \wedge \tau^K_N $,
    	\begin{equation} \label{bound_qvar_1}
    		|\langle \nu^K_s, w_K P\phi^2 \rangle| \leq C \| \phi \|_\infty^2 N.
    	\end{equation}
    	We then note that there exists a function $ r_K : \R \to \R $ such that, for all $ x \in \R $,
    	\begin{equation*}
    	e^{\varepsilon_K x} = 1 + \varepsilon_K x\, e^{\varepsilon_K r_K(x)}, \qquad \text{ and } \qquad |r_K(x)| \leq |x|.
    	\end{equation*}
    	With this notation,
    	\begin{equation*}
    	w_K(\x) e^{-c_K K \langle \nu^K_s, 1 \rangle} - 1 = \varepsilon_K m(\x) e^{\varepsilon_K r_K(m(\x)) - c \,\varepsilon_K \langle \nu^K_s, 1 \rangle} - c\, \varepsilon_K \langle \nu^K_s, 1 \rangle e^{\varepsilon_K r_K(-c\langle \nu^K_s, 1 \rangle)}.
    	\end{equation*}
    	Hence, using the fact that $r_K(\x)$ has the same sign as $x$,
    	\begin{equation} \label{bound_qvar_2}
    		|\langle \nu^K_s, (w_K e^{-c\varepsilon_K \langle \nu^K_s, 1 \rangle} - 1) P\phi \rangle| \leq C \|\phi\|_\infty \,\varepsilon_K (N + N^2).
    	\end{equation}
    	Finally, $ P\phi - \phi = \varepsilon_K \mutx_K\phi $, and, for $ \phi \in C^2_0(\Omega) $, under either Assumption (FE) or (SE),
    	\begin{equation*}
    		\sup_{K>0}\| \mutx_K \phi \|_\infty \leq C_\phi,
    	\end{equation*}
    	we thus have
    	\begin{equation} \label{bound_qvar_3}
    		|\langle \nu^K_s, P\phi-\phi \rangle| \leq C_\phi\, \varepsilon_K N.
    	\end{equation}
    	Plugging \eqref{bound_qvar_1}, \eqref{bound_qvar_2} and \eqref{bound_qvar_3} in \eqref{doob}, we obtain
    	\begin{equation*}
    		\mathbb{E}\left[ \sup_{0 \leq t \leq \lfloor T/\varepsilon_K \rfloor} | M^K_{t \wedge \tau^K_N}(\phi) |^2 \right] \leq C T \left( \frac{N}{\varepsilon_K K} + \varepsilon_K^2 (N+N^2)^2 \right).
    	\end{equation*}
    	Since the right-hand-side tends to zero as $ K \to \infty $, this concludes the proof of the lemma.
    \end{proof}
		
		\begin{lem} \label{lemma:total_size}
			Fix $T > 0$, and let
			\begin{equation*}
				X_K = \sup_{0 \leq t \leq \lfloor T/\varepsilon_K \rfloor} \langle \nu^K_t, 1 \rangle.
			\end{equation*}
			Then $ (X_K, K > 0) $ is tight in $\R_+$.
			Moreover, for any $ \delta > 0 $, $ N $ can be chosen such that
			\begin{equation*}
				\limsup_{K \to \infty} \mathbb{P}(\tau^K_N \leq \lfloor T / \varepsilon_K \rfloor) \leq \delta.
			\end{equation*}
		\end{lem}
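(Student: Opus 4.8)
The plan is to bound the total head count $N_t = K\langle\nu^K_t,1\rangle$ from above by a Galton--Watson process in which competition has been switched off, and then to exploit that, although the horizon $\lfloor T/\varepsilon_K\rfloor$ grows like $1/\varepsilon_K$, the per-generation mean growth factor is only $1+O(\varepsilon_K)$, so that the total multiplicative growth over the whole horizon remains $O(1)$ uniformly in $K$. First I would set $M:=\sup_{\overline\Omega}\abs{m}<+\infty$. Since an individual with phenotype $\x$ leaves a $\mathrm{Poisson}(w_K(\x))$ number of offspring with $w_K(\x)=e^{\varepsilon_K m(\x)}\le e^{\varepsilon_K M}$, each of which then survives with probability $e^{-c_K N_t}\le 1$, one builds (by stochastic domination of Poisson variables followed by thinning, realised individual by individual and propagated by induction on $t$) a Galton--Watson process $(\tilde N_t,t\in\N)$ with offspring law $\mathrm{Poisson}(e^{\varepsilon_K M})$ and $\tilde N_0=N_0$, such that $N_t\le\tilde N_t$ almost surely for all $t$.

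Next I would normalise: with $\mu_K:=e^{\varepsilon_K M}$, the process $W_t:=\tilde N_t/\mu_K^{\,t}$ is a nonnegative martingale, so Doob's maximal inequality gives, conditionally on $\tilde N_0$, that $\mathbb{P}\bigl(\sup_{t\le\lfloor T/\varepsilon_K\rfloor}W_t\ge\lambda\bigm|\tilde N_0\bigr)\le\tilde N_0/\lambda$. Because $\varepsilon_K\lfloor T/\varepsilon_K\rfloor\le T$ we have $\mu_K^{\lfloor T/\varepsilon_K\rfloor}\le e^{MT}$; dividing by $K$ and using $N_t\le\tilde N_t$ and $\tilde N_0=K\langle\nu^K_0,1\rangle$ then yields, for every $R>0$,
\[
\mathbb{P}\bigl(X_K\ge R\bigm|\langle\nu^K_0,1\rangle\bigr)\ \le\ \min\!\Bigl(\tfrac{e^{MT}}{R}\,\langle\nu^K_0,1\rangle,\,1\Bigr).
\]
Taking expectations and splitting the right-hand side on $\{\langle\nu^K_0,1\rangle>\sqrt R\}$ gives $\mathbb{P}(X_K\ge R)\le \mathbb{P}(\langle\nu^K_0,1\rangle>\sqrt R)+e^{MT}/\sqrt R$. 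Since $\nu^K_0\to f_0$ weakly, the family $(\langle\nu^K_0,1\rangle)_K$ is tight, so this upper bound tends to $0$ as $R\to+\infty$, uniformly in $K$, which is exactly the asserted tightness of $(X_K)_K$.

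For the statement on $\tau^K_N$, I would simply observe that $\{\tau^K_N\le\lfloor T/\varepsilon_K\rfloor\}=\{X_K\ge N\}$, so the previous display with $R=N$ reads $\mathbb{P}(\tau^K_N\le\lfloor T/\varepsilon_K\rfloor)\le\mathbb{P}(\langle\nu^K_0,1\rangle>\sqrt N)+e^{MT}/\sqrt N$. Letting $K\to\infty$ and using $\langle\nu^K_0,1\rangle\to\langle f_0,1\rangle$, the first term vanishes as soon as $N>\langle f_0,1\rangle^2$, and the second is $\le\delta$ once $N\ge e^{2MT}/\delta^2$; taking $N$ above both thresholds concludes.

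The only point that requires genuine care is the bookkeeping in the second paragraph: the horizon $\lfloor T/\varepsilon_K\rfloor$ and the growth factor $e^{\varepsilon_K M}$ must be handled jointly, since it is precisely the identity $\varepsilon_K\cdot\lfloor T/\varepsilon_K\rfloor\le T$ that produces a bound free of $K$. This is also why the Galton--Watson domination (equivalently, the localisation at $\tau^K_N$ already used in Lemma~\ref{lemma:cvg_martingale}) is the convenient route: a direct estimate via the martingale decomposition of Lemma~\ref{lemma:martingale} applied to $\phi\equiv1$ would run into the circularity that the quadratic variation of $M^K_\cdot(1)$ itself involves $\langle\nu^K_s,1\rangle$, the very quantity one is trying to control.
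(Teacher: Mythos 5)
Your proof is correct, but it takes a genuinely different route from the paper's. The paper stays entirely inside the martingale framework it has already built: it applies the decomposition of Lemma~\ref{lemma:martingale} with $\phi\equiv 1$, notes that $\mutx_K 1=0$ and that $w_K(\x)e^{-c_KK\langle\nu^K_s,1\rangle}-1\le C\varepsilon_K$, runs a discrete Gronwall inequality on the \emph{stopped} process $\langle\nu^K_{t\wedge\tau^K_N},1\rangle$ to get $\sup_{t\le\lfloor T/\varepsilon_K\rfloor}\langle\nu^K_{t\wedge\tau^K_N},1\rangle\le(\langle\nu^K_0,1\rangle+\sup_t M^K_{t\wedge\tau^K_N}(1))e^{CT}$, and then controls the two terms by the tightness of $\langle\nu^K_0,1\rangle$ and by Lemma~\ref{lemma:cvg_martingale}; the localisation at $\tau^K_N$ is precisely how it sidesteps the circularity you point out at the end (Lemma~\ref{lemma:cvg_martingale} is proved for each fixed $N$ without reference to the present lemma, so there is no vicious circle). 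Your alternative — dominating $N_t$ by a competition-free Galton--Watson process with offspring law $\mathrm{Poisson}(e^{\varepsilon_K M})$ and applying Doob's maximal inequality to the normalised martingale $W_t=\tilde N_t/\mu_K^{\,t}$, with the key cancellation $\mu_K^{\lfloor T/\varepsilon_K\rfloor}\le e^{MT}$ — is more elementary and self-contained: it does not invoke Lemma~\ref{lemma:cvg_martingale} at all and yields the explicit quantitative bound $\mathbb{P}(X_K\ge R)\le\mathbb{P}(\langle\nu^K_0,1\rangle>\sqrt R)+e^{MT}/\sqrt R$, uniform in $K$, whereas the paper's bound is only asymptotic in $K$. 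The price is that the stochastic domination must be realised by an explicit individual-by-individual coupling on an enriched probability space (Poisson domination plus thinning, propagated by induction), which you describe correctly but somewhat informally; in a fully written version this coupling construction deserves a displayed statement. Both routes deliver exactly the two assertions of the lemma, using in the same way the identity $\{\tau^K_N\le\lfloor T/\varepsilon_K\rfloor\}=\{X_K\ge N\}$ and the tightness of the initial masses.
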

	
	\begin{proof}
		Looking at the statement of Lemma~\ref{lemma:martingale}, we note that $ \mutx_K 1 = 0 $ and that
		\begin{linenomath}
\begin{align*}
     			w_K(\x) e^{-c_K K \langle \nu^K_s, 1 \rangle} - 1 &= \left( e^{\varepsilon_K m(\x)} - 1 \right) e^{-c \varepsilon_K \langle \nu^K_s, 1 \rangle} + e^{-c \varepsilon_K \langle \nu^K_s, 1 \rangle} - 1 \\
			&\leq C \varepsilon_K,
\end{align*}
\end{linenomath}
		for some constant $ C > 0 $, using the fact that $m$ is bounded.
		As a consequence,
		\begin{linenomath}
\begin{align*}
     \langle \nu^K_{t \wedge \tau^K_N}, 1 \rangle &\leq \langle \nu^K_0, 1 \rangle +  \sum_{s=1}^{t \wedge \tau^K_N} \varepsilon_K C \langle \nu^K_s, 1 \rangle + M_{t \wedge \tau^K_N}^K(1) \\
			&\leq \langle \nu^K_0, 1 \rangle +  \sum_{s=1}^{t} \varepsilon_K C \langle \nu^K_{s \wedge \tau^K_N}, 1 \rangle + M_{t \wedge \tau^K_N}^K(1).
\end{align*}
\end{linenomath}
		By Gronwall's inequality, we obtain
		\begin{equation*}
			\langle \nu^K_{t \wedge \tau^K_N}, 1 \rangle \leq \left( \langle \nu^K_0, 1 \rangle + \sup_{0 \leq s \leq t} M^K_{s \wedge \tau^K_N}(1) \right) e^{C \varepsilon_K t}.
		\end{equation*}
		Hence,
		\begin{equation*}
		\sup_{0 \leq t \leq \lfloor T/\varepsilon_K \rfloor}\langle \nu^K_{t \wedge \tau^K_N}, 1 \rangle \leq \left( \langle \nu^K_0, 1 \rangle + \sup_{0 \leq t \leq  \lfloor T/\varepsilon_K \rfloor} M^K_{t \wedge \tau^K_N}(1) \right) e^{C T}.
		\end{equation*}
		As a result,
		\begin{linenomath}
\begin{align*}
     \mathbb{P}(X_K \geq N) &= \mathbb{P}(\tau^K_N \leq \lfloor T / \varepsilon_K \rfloor) \\
			&\leq \mathbb{P}\left( \left[ \langle \nu^K_0, 1 \rangle + \sup_{0 \leq t \leq  \lfloor T/\varepsilon_K \rfloor} M^K_{t \wedge \tau^K_N}(1) \right] e^{C T} \geq N \right) \\
			&\leq \mathbb{P}\left(  \langle \nu^K_0, 1 \rangle \geq \frac{1}{2}N e^{-CT} \right) + \mathbb{P}\left( \sup_{0 \leq t \leq  \lfloor T/\varepsilon_K \rfloor} M^K_{t \wedge \tau^K_N}(1) \geq \frac{1}{2} N e^{-CT} \right).
\end{align*}
\end{linenomath}
		Since $ \langle \nu^K_0, 1 \rangle $ is tight, for any $ \delta > 0 $ we can choose $ N $ large enough such that
		\begin{equation*}
			\limsup_{K \to \infty} \mathbb{P}\left(  \langle \nu^K_0, 1 \rangle \geq \frac{1}{2}N e^{-CT} \right) \leq \delta.
		\end{equation*}
		In addition, by Lemma~\ref{lemma:cvg_martingale}, for any $ N \geq 1 $,
		\begin{equation*}
			\limsup_{K \to \infty}  \mathbb{P}\left( \sup_{0 \leq t \leq  \lfloor T/\varepsilon_K \rfloor} M^K_{t \wedge \tau^K_N}(1) \geq \frac{1}{2} N e^{-CT} \right) = 0.
		\end{equation*}
		Hence we can choose $ N $ large enough such that
		\begin{equation*}
			\limsup_{K \to \infty} \mathbb{P}(X_K \geq N) \leq \delta,
		\end{equation*}
		concluding the proof.
	\end{proof}

    \begin{lem} \label{lemma:tightness}
        Let $\overline{\Omega}$ denote the closure of $\Omega$.
    	Then, for any $ T > 0$, the sequence of processes
    	\begin{equation*}
    		\left( \nu^K_{\lfloor t / \varepsilon_K \rfloor}, t \in [0,T] \right), \quad K > 0,
    	\end{equation*}
    	is C-tight in $D([0,T], M_F(\overline{\Omega}))$.
    \end{lem}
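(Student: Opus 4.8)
The plan is to invoke the standard tightness criterion for measure-valued càdlàg processes used in \citet{fournier_microscopic_2004,ChaFer08} (in the spirit of Roelly's criterion): since $\overline\Omega$ is compact, it is enough to check (i) a \emph{compact containment} condition, and (ii) that for every $\phi$ in a subclass of $C(\overline\Omega)$ that is dense for the uniform norm (hence separating for finite measures), the real-valued processes $\big(\langle\nu^K_{\lfloor t/\varepsilon_K\rfloor},\phi\rangle,\ t\in[0,T]\big)$ form a C-tight family in $D([0,T],\R)$. I would take as test class $\{1\}\cup C^2_0(\Omega)$; this set contains the constants, is closed under products (since $\nabla(\phi\psi)\cdot\nub=\psi(\nabla\phi\cdot\nub)+\phi(\nabla\psi\cdot\nub)=0$ on $\partial\Omega$) and separates the points of $\overline\Omega$, hence is dense in $C(\overline\Omega)$ by Stone--Weierstrass.

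For compact containment, by Prokhorov's theorem the sets $\mathcal{K}_N=\{\mu\in M_F(\overline\Omega):\mu(\overline\Omega)\le N\}$ are compact in $M_F(\overline\Omega)$, and Lemma~\ref{lemma:total_size} gives, for any $\delta>0$, an $N$ with $\limsup_{K\to\infty}\mathbb{P}\big(\exists\,t\in[0,T]:\nu^K_{\lfloor t/\varepsilon_K\rfloor}\notin\mathcal{K}_N\big)\le\delta$. Fixing such an $N$, up to an event of probability $\le\delta$ one may work on $\{\tau^K_N>\lfloor T/\varepsilon_K\rfloor\}$ and therefore assume $\sup_{t\le\lfloor T/\varepsilon_K\rfloor}\langle\nu^K_t,1\rangle\le N$; it thus suffices to prove C-tightness of the processes stopped at $\tau^K_N$ and then let $N\to\infty$.

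For the real-valued processes I would use the semimartingale decomposition of Lemma~\ref{lemma:martingale}, $\langle\nu^K_t,\phi\rangle=\langle\nu^K_0,\phi\rangle+A^K_t(\phi)+M^K_t(\phi)$ with $A^K_t(\phi)=\sum_{s=1}^t\langle\nu^K_s,\,w_Ke^{-c_KK\langle\nu^K_s,1\rangle}P\phi-\phi\rangle$. The martingale part is controlled by Lemma~\ref{lemma:cvg_martingale}: $\sup_{t\le\lfloor T/\varepsilon_K\rfloor}|M^K_{t\wedge\tau^K_N}(\phi)|\to0$ in probability, so this component converges to $0$ (in particular it is C-tight and its one-step increments vanish uniformly). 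For $A^K$, the computation preceding Lemma~\ref{lemma:cvg_martingale} shows that each one-generation increment equals $\varepsilon_K\langle\nu^K_s,\mutx_K\phi+(m-c\langle\nu^K_s,1\rangle)\phi\rangle+o(\varepsilon_K\langle\nu^K_s,1\rangle)$, which on $\{\tau^K_N>\lfloor T/\varepsilon_K\rfloor\}$ is bounded by $C_{\phi,N}\,\varepsilon_K$, using $\sup_K\|\mutx_K\phi\|_\infty\le C_\phi$ and $\|m\|_\infty<\infty$. Summing yields the uniform Lipschitz-type bound $|A^K_{\lfloor u/\varepsilon_K\rfloor}(\phi)-A^K_{\lfloor s/\varepsilon_K\rfloor}(\phi)|\le C_{\phi,N}(|u-s|+\varepsilon_K)$, so the drift part is equicontinuous, C-tight, with maximal one-step jump $\le C_{\phi,N}\varepsilon_K\to0$. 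Adding the two parts, Aldous' condition is immediate and the maximal jump of $\langle\nu^K_{\lfloor\cdot/\varepsilon_K\rfloor},\phi\rangle$ tends to $0$ in probability, which gives the claimed C-tightness for each $\phi$; combined with compact containment this closes the argument.

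The main difficulty here is not conceptual but organizational: one must carefully interface the \emph{non-overlapping} discrete-time dynamics with the time rescaling by $\varepsilon_K$ — in a single generation the whole population is replaced, yet Lemma~\ref{lemma:martingale} shows the fluctuation of each $\langle\nu^K,\phi\rangle$ over one step is only $O(K^{-1/2})+O(\varepsilon_K)$ — and one must justify both the reduction to the $\tau^K_N$-stopped processes and the final removal of the stopping level, as well as check that $\{1\}\cup C^2_0(\Omega)$ is genuinely separating on the \emph{closure} $\overline\Omega$. Once these routine points are in place, the criterion applies as in \citet{fournier_microscopic_2004,ChaFer08}.
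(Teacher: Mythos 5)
Your proposal is correct and follows essentially the same route as the paper: compact containment from Lemma~\ref{lemma:total_size}, reduction to the $\tau^K_N$-stopped processes, and C-tightness of the one-dimensional projections $\langle\nu^K_{\lfloor\cdot/\varepsilon_K\rfloor},\phi\rangle$ for $\phi\in C^2_0(\Omega)$ via the $O(\varepsilon_K)$ bound on the drift increments and Lemma~\ref{lemma:cvg_martingale} for the martingale part. The only cosmetic difference is that you invoke Aldous' criterion where the paper bounds the modulus of continuity directly and concludes via \citep[Proposition~VI.3.26]{jacod_limit_2003}.
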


    \begin{proof}
    Since $\overline{\Omega}$ is compact, Lemma~\ref{lemma:total_size} implies that the compact containment condition of Theorem~3.9.1 in \citep{ethier_markov_1986} holds.
    It remains to show that, for a dense subset $H$ of the set of bounded and continuous functions on $M_K(\overline{\Omega})$ (in the topology of uniform convergence on compact sets), $(h(\nu^K_t), t \in [0,T])$ is C-tight for every $h \in H$.
    We shall take
    \begin{equation*}
        H = \lbrace \nu \mapsto f(\langle \nu, \phi \rangle), \phi \in C^2_0(\Omega), f \in C^2(\R) \rbrace.
    \end{equation*}
    
    	By Lemma~\ref{lemma:total_size}, it is sufficient to show that $(f(\langle \nu^K_{\lfloor t/\varepsilon_K\rfloor \wedge \tau^K_N}, \phi \rangle), t \in [0,T])$ is tight for any $ N $ large enough.
    	Now, using \eqref{bound_qvar_2} and \eqref{bound_qvar_3}, for $0 \leq s \leq \lfloor T / \varepsilon_K \rfloor \wedge \tau^K_N$,
    	\begin{equation*}
    		|\langle \nu^K_s, w_K e^{-c_K K \langle \nu^K_s, 1 \rangle} P\phi - \phi \rangle| \leq C_{\phi,N}\, \varepsilon_K,
    	\end{equation*}
    	for some constant $ C_{\phi,N} > 0 $ depending only on $\phi$ and $N$.
    	As a result, if $ w_\theta(g) $ denotes the modulus of continuity of $g : [0,T] \to \R$, i.e.,
    	\begin{equation*}
    		w_\theta(g) = \sup_{\underset{0 \leq s \leq t \leq T}{|t-s| \leq \theta}} |g(t)-g(s)|,
    	\end{equation*}
    	we obtain
    	\begin{equation*}
    		w_\theta\left( f(\langle \nu^K_{\lfloor \cdot / \varepsilon_K \rfloor \wedge \tau^K_N}, \phi \rangle) \right) \leq \sup_{|x| \leq N} |f'(x)| \left( C_{\phi,N}\, \theta + 2 \sup_{0 \leq t \leq \lfloor T / \varepsilon_K \rfloor} |M^K_{t \wedge \tau_N^K}(\phi)| \right).
    	\end{equation*}
    	Hence, by Lemma~\ref{lemma:cvg_martingale}, for any $ \delta > 0 $ and $ \varepsilon > 0 $, there exists $ \theta > 0 $ such that
    	\begin{equation*}
    		\limsup_{K \to \infty}\, \mathbb{P}\left( w_\theta\left( f(\langle \nu^K_{\lfloor \cdot / \varepsilon_K \rfloor \wedge \tau^K_N}, \phi \rangle) \right) > \delta \right) \leq \varepsilon.
    	\end{equation*}
    	Combined with Lemma~\ref{lemma:total_size}, this shows that $( f(\langle \nu^K_{\lfloor t/\varepsilon_K\rfloor \wedge \tau^K_N}, \phi \rangle), t \in [0,T])$ is C-tight for any $ \phi \in C^2_0(\Omega) $, $f \in C^2(\R)$ and $ N > 0 $ (see for example \citep[Proposition~VI.3.26]{jacod_limit_2003}), and the result is proved.
    \end{proof}

    We can now conclude the proof of the main result.
    
    \begin{proof}[Proof of Proposition~\ref{prop:cvg_non_overlapping}]
        \Rd The fact that \eqref{limit_f_discrete} defines a unique function $(f_t, t \in [0,T])$ taking values in $M_F(\Omega)$ is proved in \cite{ChaFer08} (in the proof of Theorem~4.3). \Bk
    	Consider then a converging subsequence, still denoted \[\left(\nu^K_{\lfloor t / \varepsilon_K \rfloor}, t \in [0,T] \right)\] and let $(f_t, t \in [0,T])$ be its limit.
    	Since the sequence is C-tight, $t \mapsto f_t$ is continuous and the convergence holds uniformly on $ [0,T] $.
    	
    	The result will be proved if we show that $f_t$ solves \eqref{limit_f_discrete}.
    	Let $ \phi \in C^2(\Omega) $.
    	By Lemma~\ref{lemma:cvg_martingale},
    	\begin{equation*}
    		\sup_{0 \leq t \leq  \lfloor T/\varepsilon_K \rfloor} \left|  \langle \nu^K_{t\wedge \tau^K_N}, \phi \rangle - \langle \nu^K_0, \phi \rangle - \sum_{s=1}^{t\wedge \tau^K_N} \langle \nu^K_s, w_K e^{-c_K K \langle \nu^K_s, 1 \rangle} P\phi - \phi \rangle \right| \longrightarrow 0
    	\end{equation*}
    	in probability as $ K \to \infty $.
    	In addition,
    	\begin{multline*}
    		w_K(\x) e^{-c_K K \langle \nu^K_s, 1 \rangle} P\phi(\x) - \phi(\x) - \varepsilon_K \left( \mutx\phi(\x) - (m(\x) - c \langle \nu^K_s, 1 \rangle) \phi(\x) \right) \\= \left( e^{\varepsilon_K (m(\x)- c \langle \nu^K_s, 1 \rangle)} - 1 - \varepsilon_K (m(\x) - c \langle \nu^K_s, 1 \rangle) \right) P\phi(\x) \\+ P\phi(\x) - \phi(\x) - \varepsilon_K \mutx\phi(\x) \\+ \varepsilon_K (m(\x) - c \langle \nu^K_s, 1 \rangle) (P\phi(\x) - \phi(\x)).
    	\end{multline*}
    	Hence, for $ 0 \leq s \leq \lfloor T / \varepsilon_K \rfloor \wedge \tau^K_N $ and $ \phi \in C^2_0(\Omega) $,
    	\begin{multline*}
    		\left|w_K(\x) e^{-c_K K \langle \nu^K_s, 1 \rangle} P\phi(\x) - \phi(\x) - \varepsilon_K \left( \mutx\phi(\x) - (m(\x) - c \langle \nu^K_s, 1 \rangle) \phi(\x) \right)\right| \\
    		\begin{aligned}
    		&\leq C \varepsilon_K^2 + \varepsilon_K \left( \mutx_K \phi(\x) - \mutx\phi(\x) \right) \\ 
    		&\leq C' \varepsilon_K^2.
    		\end{aligned}
    	\end{multline*}
    	As a result,
    	\begin{multline*}
    		\sup_{0 \leq t \leq  \lfloor T/\varepsilon_K \rfloor} \left| \sum_{s=1}^{t \wedge \tau^K_N} \langle \nu^K_s, w_K e^{-c_K K \langle \nu^K_s, 1 \rangle} P\phi - \phi \rangle \right. \\ \left. - \sum_{s=1}^{t \wedge \tau^K_N} \varepsilon_K \langle \nu^K_s, \mutx\phi + (m-c\langle \nu^K_s,1 \rangle) \phi \rangle \right| \leq C' T \varepsilon_K.
    	\end{multline*}
    	Hence, on the event $ \lbrace \tau^K_N > \lfloor T / \varepsilon_K \rfloor \rbrace$,
    	\begin{multline*}
    		\sup_{0 \leq t \leq T} \left| \langle f_t, \phi \rangle - \langle f_0, \phi \rangle - \int_{0}^{t} \langle f_s, \mutx\phi - (m - c \langle f_s, 1 \rangle) \phi \rangle ds \right|\\ \leq 2 \sup_{0 \leq t \leq T} | \langle \nu^K_{\lfloor t/ \varepsilon_K \rfloor}, \phi \rangle - \langle f_t, \phi \rangle | + C' T \varepsilon_K + \sup_{0 \leq t \leq  \lfloor T/\varepsilon_K \rfloor} \left| M^K_{t \wedge \tau^K_N}(\phi) \right| \\+ T \sup_{0 \leq s \leq T} \left| \langle \nu^K_s, \mutx\phi + (m-c\langle \nu^K_s, 1 \rangle) \phi \rangle - \langle f_s, \mutx\phi + (m-c\langle f_s, 1 \rangle) \phi \rangle \right|.
    	\end{multline*}
    	Combined with Lemma~\ref{lemma:total_size} and the (uniform) convergence of $\nu^K_{\lfloor \cdot / \varepsilon_K \rfloor}$ to $f$, this shows that, for any $ \varepsilon > 0 $,
    	\begin{equation*}
    		\mathbb{P} \left( \sup_{0 \leq t \leq T} \left| \langle f_t, \phi \rangle - \langle f_0, \phi \rangle - \int_{0}^{t} \langle f_s, \mutx\phi - (m - c \langle f_s, 1 \rangle) \phi \rangle ds \right| > \varepsilon \right) = 0.
    	\end{equation*}
    	It follows that $(f_t, t \in [0,T])$ solves \eqref{limit_f_discrete}, hence $(\nu^N_t, t \in [0,T])$ converges in distribution, and in probability, to $(f_t, t \in [0,T])$ in $D([0,T], M_F(\overline{\Omega})$.
    	Since in fact $f_t \in M_F(\Omega)$ for any $t \geq 0$, this concludes the proof of the result.
    \end{proof}

\subsection{Proof of Proposition~\ref{prop:initial_bias}}\label{proof initial}

Multiplying equation \eqref{eq:main_model} by $x_1$, integrating over $\x \in \Omega$ and evaluating at $t=0$, we get
	\begin{eqnarray*}
 \xxb'(0)&=&\p_t \left(  \int_\Omega x_1 q(t,\x) d\x\right) (t=0)  \\ 
&=&	D\, \int_\Omega x_1 \Delta (bq_0)(\x) d\x + \int_\Omega x_1 q_0(\x) m(\x) d\x - \overline m(0) \int_\Omega x_1 q_0(\x) d\x .
	\end{eqnarray*}
From Green formula we infer
	\begin{align*}
	\int_\Omega x_1 \Delta   (bq_0)(\x) d\x  & = \int_{\partial \Omega} x_1\,  \nabla (bq_0) \cdot \nub(\x) \, ds - \int_{\partial \Omega} (bq_0)(\x)\, \vec{e_1}\cdot \nub(\x) \, ds, \\
	& = 0 
	\end{align*}
 since $q_0$ is compactly supported in $\Omega$. Moreover, since $q_0$ and $m$ both satisfy $m(\iota(\x))=m(\x)$, $q_0(\iota(\x))=q(\x)$,  \begin{equation*}
	\int_\Omega x_1 q_0(\x) m(\x) d\x = \int_\Omega x_1 q_0(\x)  d\x = 0.
	\end{equation*}
	This shows that $\xxb'(0)=0$.

	We next turn to the second derivative $\xxb ''(0)$. We differentiate equation \eqref{eq:main_model} with respect to time, multiply by $x_1$, integrate over $\x \in \Omega$ and evaluate at $t=0$ to reach
	\begin{multline}
\xxb ''(0)=	\p_{tt}\left(  \int_\Omega x_1 q(t,\x) d\x\right) (t=0) 
=	D \int_\Omega x_1 \Delta (b\p_t q(0,\x)) d\x + \int_\Omega x_1 \p_t q(0,\x) m(\x) d\x 
\\- \overline m(0) \int_\Omega x_1 \p_t q(0,\x) d\x  - \overline m'(0) \int_\Omega x_1 q_0(\x) d\x\nonumber.
\end{multline}
 From the above computation, this reduces to 
\begin{equation*}
\xxb ''(0)= D \int_\Omega x_1 \Delta (b\p_t  q(0,\x)) d\x + \int_\Omega x_1 \p_t q(0,\x) m(\x) d\x.
\end{equation*}
Moreover, since $\p_t q(0,\x)$ is also compactly supported (this follows from equation~\eqref{eq:main_model}), Green formula yields \begin{equation*}\label{eq:sign inter3}
	\int_\Omega x_1 \Delta (b\p_t q(0,\x)) d\x = 0,
	\end{equation*}
	and we are left with
\begin{equation}\label{eq:sign inter}
\xxb ''(0)= \int_\Omega x_1 \p_t q(0,\x) m(\x) d\x.
\end{equation}	
	We multiply equation \eqref{eq:main_model} by $x_1 m(\x)$, integrate over $\x \in \Omega$ and evaluate at $t=0$ to obtain
	\begin{eqnarray*}
	\int_\Omega x_1  \p_t q(0,\x) m(\x) d\x &=& D\int_\Omega \Delta (b q_0) (\x) x_1 m(\x) d\x + \int_\Omega x_1  q_0(\x) m(\x)^2 d\x \\ 
	&&-\overline m(0) \int_\Omega x_1  q_0(\x) m(\x)d\x.
	\end{eqnarray*}
	By symmetry, the last two terms vanish, and another Green formula leads to
	\begin{equation}\label{green3}
	\int_\Omega x_1  \p_t q(0,\x) m(\x) d\x  = D\int_\Omega (b q_0) (\x) \Delta( x_1 m(\x) ) d\x.
	\end{equation}
	Then, we observe that
		\begin{align*}
	    \int_{\Omega\cap \{x_1<0 \}} b(\x)\,  q_0(\x) \Delta( x_1 m(\x) ) d\x  & =  -\int_{\Omega\cap \{x_1>0 \}} s(\x) q_0 (\x)  \Delta( x_1 m(\x) ) d\x,
	    \end{align*}
	    as $q_0$ and $m$ are symmetric about $\{x_1=0\}$, and from \eqref{eq:sym_b_s}. Thus,
	\begin{eqnarray*}
	    \int_\Omega (b q_0) (\x) \Delta( x_1 m(\x) ) d\x  &=&\int_{\Omega\cap \{x_1<0 \}} (b q_0) (\x) \Delta( x_1 m(\x) ) d\x \\ && \qquad \qquad  +\int_{\Omega\cap \{x_1 > 0\}} (b q_0) (\x) \Delta( x_1 m(\x) ) d\x, \\
	    &=& \int_{\Omega\cap \{x_1 > 0\}\cap K_0} (b-s)(\x) \, q_0 (\x) \, \Delta( x_1 m(\x) ) d\x,
	\end{eqnarray*}
with $K_0$ the support of $q_0$ (containing $\x_0$). From this, \eqref{eq:sign inter} and \eqref{green3}, we end up with
\[
\xxb ''(0)=D \int_{\Omega\cap \{x_1 > 0\}\cap K_0} (b-s)(\x) \, q_0 (\x) \, \Delta( x_1 m(\x) ) d\x.
\]
We know from \eqref{hyp_b} that $(b-s)(\x)> 0$ in $\Omega\cap \{x_1 > 0\}$.  As a result, if $\Delta( x_1 m(\x) )$ is nontrivial and nonnegative (nonpositive) on $K_0^+=K_0\cap\{x_1>0\}$ then $\xxb ''(0)>0$ ($\xxb ''(0)<0$ respectively). This concludes the proof of \Cref{prop:initial_bias}. \qed 

\subsection{An explicit solution of the eigenvalue problem}\label{proof explicit}
	
We assume that the dimension is $n=1$ and
	\begin{equation*}
	b(x)=\baco{l}
	2 \quad \hbox{ for } x\in  (0,a),\\
	1 \quad \hbox{ for } x\in (-a, 0),\\
	\eaco
	\end{equation*}
and we consider the eigenvalue problem \eqref{eq:statio1} with Dirichlet boundary conditions. As $b$ is discontinuous, the eigenvalue problem  must be understood in the weak sense. In particular, we have to solve
\begin{equation}
	\baco{rl}
	\ds D\,  q_{1,\infty}'' (x)& = (\mb_\infty-r-3) \, q_{1,\infty}(x),  \ x\in (-a,0), \vspace{1mm}\\
	2D\,  q_{2,\infty}'' (x) &= (\mb_\infty-r-3) \, q_{2,\infty}(x),   \ x\in (0,a),
	\eaco
	\end{equation}
	with the boundary, continuity and flux conditions:
	\begin{equation}
	\baco{l}
	q_{1,\infty}(-a)=q_{2,\infty}(a)=0,\vspace{1mm}\\
	q_{1,\infty}(0)= q_{2,\infty}(0), \ q_{1,\infty}'(0)=2 \, q_{2,\infty}'(0),
	\eaco
	\end{equation}
	the positivity conditions $ q_{1,\infty}, \, q_{2,\infty}>0$ and $\mb_\infty-r-3<0$.
	
	Set $\mu=\sqrt{2 \, D}$ and $B:=\sqrt{-\mb_\infty+r+3}/\mu$. We have
	\begin{equation} \label{eq:sol_q1_q2}
	\baco{l}
	q_{1,\infty}(x)=\ds -\frac{\sqrt{2}}{B} \, \cos\lp x\, B\, \sqrt{2}\rp \lp \tan( x\, B\, \sqrt{2}) + \tan(a\, B\, \sqrt{2})\rp, \ x\in (-a,0),  \vspace{1mm}\\
	q_{2,\infty}(x)=\ds \frac{1}{B} \, \cos\lp x\, B\, \rp \lp \tan(a\, B)-\tan( x\, B) \rp,  \ x\in (0,a).
	\eaco
	\end{equation}
	The equality $q_{1,\infty}(0)= q_{2,\infty}(0)$ thus implies:
	\begin{equation}\label{eq:egalite_q1q2}
	\sqrt{2}\, \tan(a\, B \, \sqrt{2})=-\tan(a\,B).
	\end{equation}
	The positivity of $q_{1,\infty}, \, q_{2,\infty}$ implies that $0<a\, B< \pi/2$. The equation~\eqref{eq:egalite_q1q2} thus admits a unique solution $a\, B\in (\pi/(2\,\sqrt{2}),\pi/2)$ ($a\,B\approx 1.338761890$).
	Additionally, we have:
	\[
	\ds\frac{\int_{-a}^0 q_{1,\infty}(x)\, dx}{\int_{0}^a q_{2,\infty}(x)\, dx}  =-\frac{(1- \cos( a\, B\, \sqrt{2}))\, \cos(a\, B)}{(1- \cos( a\, B))\, \cos( a\, B\, \sqrt{2})},\]and using~\eqref{eq:egalite_q1q2},
	\begin{linenomath}
\begin{align*}
    \ds\frac{\int_{-a}^0 q_{1,\infty}(x)\, dx}{\int_{0}^a q_{2,\infty}(x)\, dx}   & = \frac{1}{\sqrt{2}}\,\frac{(1- \cos( a\, B\, \sqrt{2}))\, \sin(a\, B)}{(1- \cos( a\, B))\, \sin( a\, B\, \sqrt{2})} \\ & = \frac{1}{\sqrt{2}}\frac{j(a\,B\,\sqrt{2})}{j(a\,B)}.  
\end{align*}
\end{linenomath}
	with $j(x):=(1-\cos(x))/\sin(x)$. As $j(x\,\sqrt{2})/j(x)$ is increasing on $(\pi/(2\,\sqrt{2}),\pi/2)$, we get:
	\[ \frac{1}{\sqrt{2}}\frac{j(a\,B\,\sqrt{2})}{j(a\,B)}
	\ge \frac{1}{\sqrt{2}} \frac{j(\pi/2)}{j(\pi/(2\,\sqrt{2}))}=\frac{1}{\sqrt{2}}\, \frac{1}{j(\pi/(2\,\sqrt{2}))}.
	\]
	As $1/(2\, \sqrt{2})<3/8$ and since $j$ is increasing on $(\pi/(2\,\sqrt{2}),\pi/2)$, \[j(\pi/(2\,\sqrt{2}))<j(3\, \pi/8)=1-\sqrt{2}+\sqrt{4-2\,\sqrt{2}}.\]Finally,
	\[\ds\frac{\int_{-a}^0 q_{1,\infty}(x)\, dx}{\int_{0}^a q_{2,\infty}(x)\, dx}    = \frac{1}{\sqrt{2}}\frac{j(a\,B\,\sqrt{2})}{j(a\,B)}>\frac{1}{\sqrt{2}\, j(3\, \pi/8)}= \frac{1}{2 \,\sqrt{2-\sqrt{2}}-2+\sqrt{2}}>1. \]

\Rd{}
\subsection{Model \eqref{eq:main_model} with a constant fitness function}\label{app:num_flat}
Assume that $b(\x)>0$ and $s(\x)=b(\iota(\x))$ are such that $m(\x)$ is constant. We know that the stationary distribution $q_\infty$ is (up to a multiplicative constant) the unique positive function such that the eigenvalue problem~\eqref{eq:statio1} admits a solution. Since $m$ is constant, we observe that $1/b$ satisfies these conditions, with the principal eigenvalue $m$. Thus, $q_\infty(\x) = C/b(\x)$ for some positive constant $C$ such that $q_\infty$ has integral $1$ over $\Omega$, and $\mb_\infty=m$. This means that $q_\infty$ takes larger values when $b$ is small, and shows that the high survival strategy is promoted at equilibrium.

As in Appendix~\ref{proof explicit}, we now consider the case of a flat fitness function in dimension $n=1$, over an interval $(-a,a)$. For simplicity, we take a regularised form of the function $b$ in Appendix~\ref{proof explicit}, namely
$$
b(x)=1+\frac{1+\tanh(\alpha\,x)}2,
$$
with $\alpha=40$ in our simulations. This leads to the survival function $s(x)=1+\frac{1-\tanh(\alpha\, x)}2$ and to the constant fitness function $m(x)=3-r$, for all $x\in (-a,a)$. We numerically computed the solution of the model \eqref{eq:main_model} with these assumptions and with Neumann boundary conditions at $\pm a$, as in the rest of the main text. We started with an initial condition concentrated at $x_0=0$. Notice that the results of Proposition~\ref{prop:initial_bias} cannot be applied here as $\Delta (x m) \equiv 0.$ However, we observe in Fig.~\ref{fig:rev1} (left panel) that the mean phenotype $\overline{x}(t)$ is attracted towards negative values, corresponding to the strategy where the survival function takes higher values. As expected, $q(t,\cdot)$ converges towards the stationary distribution $q_\infty(x)=1/b/\lp\int_{-a}^a 1/b \rp$ (right panel in Fig.~\ref{fig:rev1}), which also takes larger values when $x$ is negative. This shows that, again, adaptation tends to promote the high survival strategy at large times.
The Matlab codes corresponding to these simulations are available at \url{https://osf.io/g6jub/}.

\begin{figure}[h!]
\center
\includegraphics[width=0.45\textwidth]{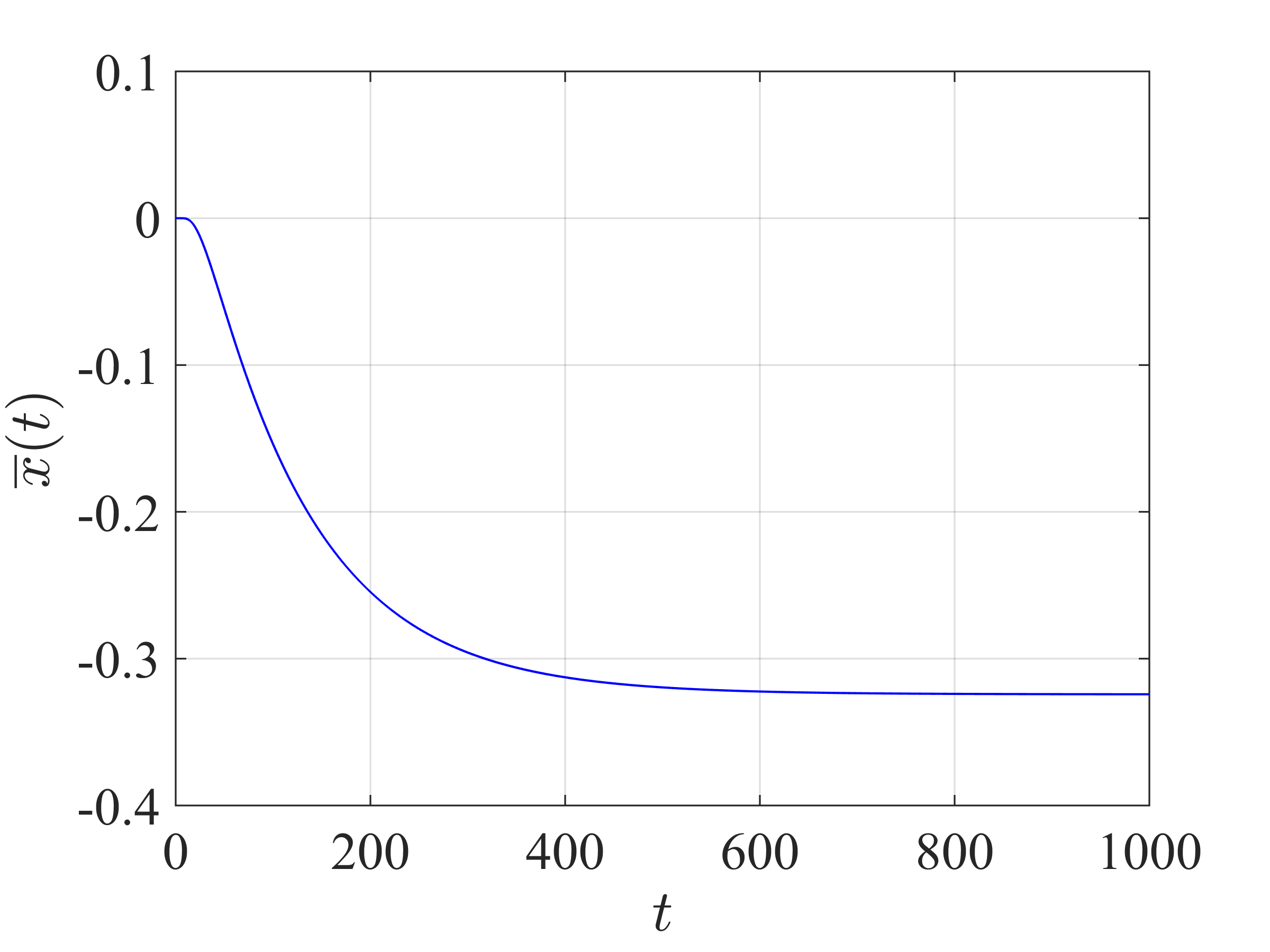}
\includegraphics[width=0.45\textwidth]{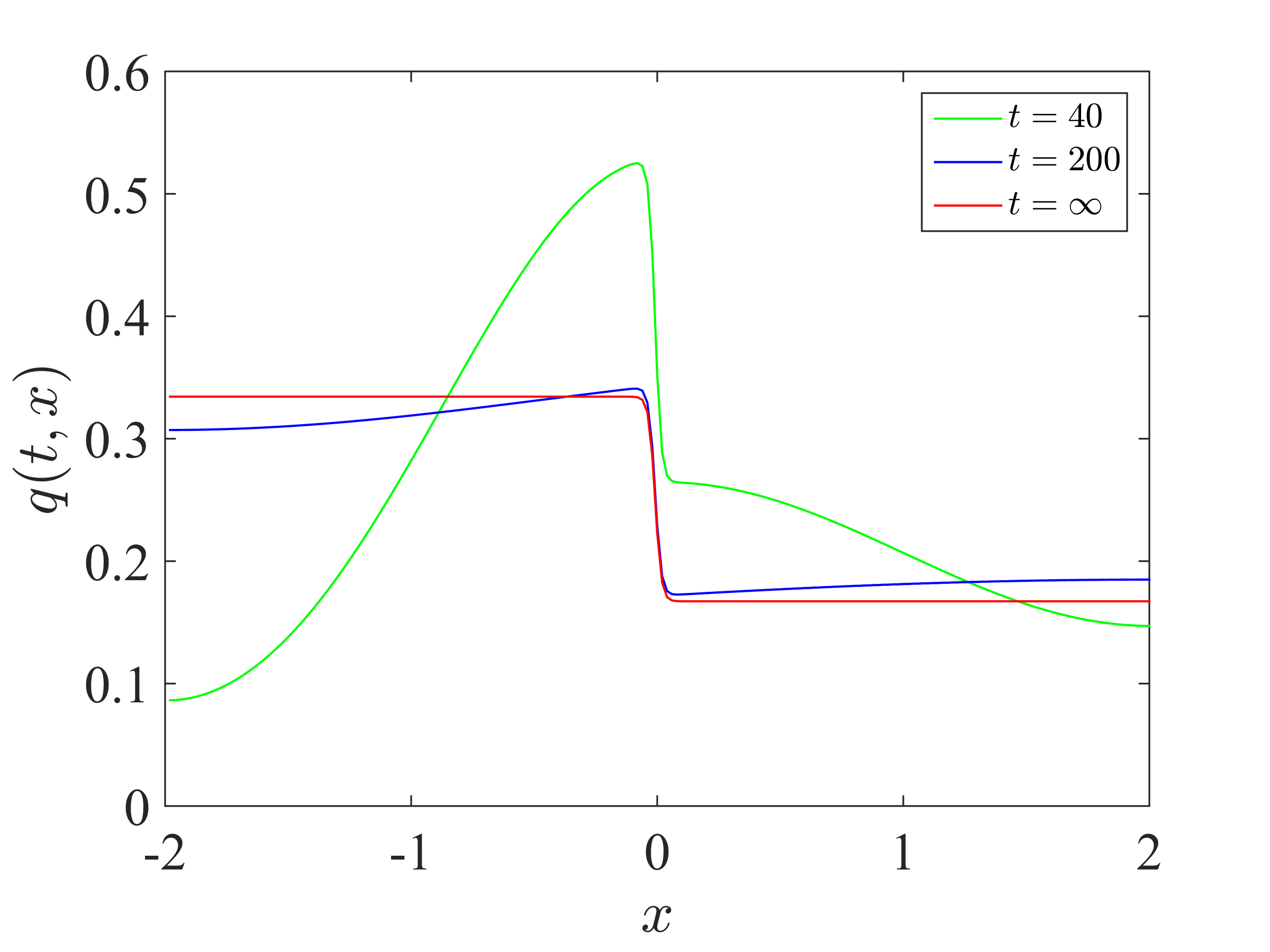}
\caption{{\bf \Rd{} Solution of the model ($\mathcal{Q}_b$) with a constant fitness function.} \Rd{} The dimension is $n=1$, $b(x)=1+(1+\tanh(\alpha\, x))/2$ and $s(x)=1+(1-\tanh(\alpha\, x))/2$ which leads to the fitness function $m(x)=3-r$. The left panel depicts the dynamics of the mean phenotype $\overline{x}(t)$ and the right panel shows the distribution $q(t,\cdot)$ at $t=40$ and $t=200$ and the equilibrium distribution $q_\infty(x)=1/b/\lp\int_{-a}^a 1/b \rp$ obtained by solving the eigenvalue problem~\eqref{eq:statio1}.  In these computations, the initial condition was concentrated at $x_0=0$. The other parameter values are $D=10^{-2}$ and $\alpha=40$. }
\label{fig:rev1}
\end{figure} 	

\Bk{}

\section{Asymmetric fitness landscapes \label{app:asym}}

In the main text, the birth and survival functions have the same height and width, so that the resulting fitness landscape $m(\x)$ is symmetric, double peaked, and both peaks have equal height. We consider here an asymmetric case, where the two peaks have different height. Namely, we consider the case:
\begin{equation}\label{eq:asym}
    \baco{l}
b(\x)=b_0+\gamma \, b_1(\x),\\
s(\x)=b_0+ b_1(\iota(\x)),
\eaco
\end{equation}
for $\gamma\neq 1$ (the case $\gamma=1$ is treated in the main text), and with a function $b_1$ with a single optimum at $\x=\Oc_b$, with  $b_1(\Oc_b)=b_{\hbox{max}}$, and which decays to $0$ away from $\Oc_b$. We recall that $\iota(\x)=\iota(x_1,x_2, ..., x_n) = (-x_1,x_2, ..., x_n).$

In this framework the fitness of the birth optimum is $m(\Oc_b)=2\, b_0 + \gamma \, b_{\hbox{max}}+\varepsilon-r$ and  the fitness of the survival optimum is  $m(\Oc_s)=2\, b_0 + b_{\hbox{max}}+\gamma \, \varepsilon-r$. In both cases, $\varepsilon:=b_1 (\Oc_s)$. We assume here that $\varepsilon\ll1$, meaning that the phenotype $\Oc_s$ has a birth rate very close to the baseline value $b_0$. Similarly, the phenotype $\Oc_b$ has a survival rate $b_0+\gamma \, \varepsilon$ close to the baseline survival rate $s_0=b_0$, see the scheme on \Cref{fig:m(x)gamma}.
\begin{figure}
\center
	\definecolor{mycolor1}{rgb}{0.00000,0.44700,0.74100}%
\def\gammag{1.3}
\def\a{6}

	\begin{tikzpicture}[scale= 0.4]
\begin{axis}[%
width=5.298in,
height=3.841in,
at={(0.889in,0.825in)},
scale only axis,
every outer x axis line/.append style={black},
every x tick label/.append style={font=\color{black},font=\Huge},
every x tick/.append style={black},
xmin=-1.3,
xmax=1.3,
xtick={-0.5,  0, 0.5},
xticklabels={$-\beta$,$0$,$\beta$},
tick align=outside,
ymin=0,
ymax = 1.5,
every outer y axis line/.append style={black},
every y tick label/.append style={font=\color{black},font=\Huge},
every y tick/.append style={black},
ytick={0,1,1.3},
yticklabels={$2b_0$,$2\, b_0 +\, b_{\hbox{max}}+ \gamma \varepsilon-r$,$2\, b_0 + \gamma \, b_{\hbox{max}}+\varepsilon-r$},
ylabel style={font=\Huge},
axis background/.style={fill=white},
axis x line*=bottom,
axis y line*=left,
xlabel style={font=\bfseries, left, at={(1,0)},font=\Huge },
xlabel={$x_1$}]
%
	
	\addplot[color =mycolor1,line width=3.0pt, samples = 100, domain = -1.3:1.3] {1.3*exp(-6*(\x-0.5)*(\x -0.5)) + exp(-6*(\x+0.5)*(\x  + 0.5))};
	
	\addplot[color =red, samples = 100, domain = -1.3:1.3] {\gammag*exp(-\a*(\x-0.5)*(\x -0.5))};
		\addplot[color =red, samples = 100, domain = -1.3:1.3] {exp(-\a*(\x+0.5)*(\x + 0.5))};
		
		\addplot [color=black, dashed, forget plot]
		table[row sep=crcr]{%
			-0.5	0\\
			-0.5	1.5\\
		};
	
		\addplot [color=black, dashed, forget plot]
		table[row sep=crcr]{%
			0.5	0\\
			0.5	1.5\\
		};
			\addplot [color=black, dashed, forget plot]
		table[row sep=crcr]{%
			-1.3	1\\
			-0.5	1\\
		};
			\addplot [color=black, dashed, forget plot]
		table[row sep=crcr]{%
			-1.3 1.3\\
			0.5	1.3\\
		};
	\end{axis}
	\end{tikzpicture}

	\label{subfigmbis}
	\caption{{\bf Schematic representation of the asymmetric fitness function $m(\x)$ considered here, along the phenotype dimension $x_1$.} Compare with Fig.~1 in the main text.  In red we pictured the functions $b + b_0-r/2$ and  $s+b_0-r/2$. Observe that the difference of fitness between the two peaks is $ (\gamma -1)(b_{\hbox{max}}- \varepsilon)$ and thus $\gamma$ tunes the asymmetry of the phenotypic landscape.}
	\label{fig:m(x)gamma}
\end{figure}

In the main text, with $\gamma=1$, we have shown that the trajectories are attracted by the survival optimum. We check here whether this remains true for asymmetric fitness functions ($\gamma\neq 1$).
In \Cref{fig:gamma}, we depict the position of the mean phenotype $\xb(t)$ (first coordinate) depending on the value of $\gamma$, at small times $(t=40)$, larges times $(t=500)$ and infinite time (in this case, we directly solve the eigenvalue problem \eqref{eq:statio2} with Comsol Multiphysics eigenvalue solver). We observe that, at small times, the trajectories are attracted by the birth optimum, whatever the value of $\gamma$, and reach positions closer to $\beta$ (the first component of $\Oc_b$) as $\gamma$ is increased. At larger times, we observe a bifurcation threshold $\gamma^*>1$ such that the trajectories are still attracted by the survival optimum when $\gamma<\gamma^*$, while they are attracted by the birth optimum for $\gamma>\gamma^*$.

We claim here that the trajectories are attracted by the survival optimum as long as the difference between the fitness peaks $m(\Oc_b)-m(\Oc_s)$ is smaller than difference between the mutation loads that would be associated with an equilibrium distribution around $\Oc_b$ vs around $\Oc_s$. To check this conjecture, we consider as in the figures of the main text, a function $b_1(\x)=\exp\left[-(\x-\Oc_b)^2/(2 \, \sigma^2)\right]$, and we assume a single-peak landscape with a unique optimum at $\Oc_b$. The corresponding fitness is  $m^b(\x)=2 \, b_0 + \gamma \, b_1(\x)-r$.  We make the weak selection approximation $\Delta(b\,q)\approx  b(\Oc_b) \, \Delta q=(b_0+1)\, \Delta q$ in the model ($\mathcal{Q}_{b}$) and  $m^b(\x)\approx 2 \, b_0 +\gamma \, (1-\|\x-\Oc_b\|^2/(2\, \sigma^2))-r$. The results in \citep{MarRoq16,HamLav20} imply that the equilibrium mean fitness is $\mb^b_\infty=2 \, b_0 +\gamma-r -n\sqrt{ 2 \,D \, (b_0+1) \, \gamma}/(2 \, \sigma)$. The mutation load is: $n\sqrt{ 2 \,D \, (b_0+1) \, \gamma}/(2 \, \sigma)$.

Now, consider a single-peak landscape with a unique optimum at $\Oc_s$, with a fitness function $m^s(\x)=2 \, b_0 +  b_1(\iota(\x))-r$ and make a weak selection approximation $\Delta(b\,q)\approx  b_0 \, \Delta q$ in the model ($\mathcal{Q}_{b}$) and  $m^s(\x)\approx 2 \, b_0 + (1-\|\x-\Oc_s\|^2/(2\, \sigma^2))-r$. This time, we get $\mb^s_\infty=2 \, b_0 +1-r-n\sqrt{ 2 \,D \, b_0}/(2 \, \sigma)$, and the mutation load is $n\sqrt{ 2 \,D \, b_0}/(2 \, \sigma)$. Finally, the difference between the fitness peaks $m(\Oc_b)-m(\Oc_s) = (\gamma-1)(b_{\hbox{max}}-\varepsilon)\approx \gamma-1 $ is smaller than difference between the corresponding mutation loads if 
\begin{equation} \label{eq:gamma}
\gamma-1 < \frac{n\sqrt{ 2 \,D}}{2 \, \sigma}(\sqrt{\gamma(b_0+1)}-\sqrt{b_0}).    
\end{equation}
With the parameter values in \Cref{fig:gamma}, this leads to $\gamma^*=1.03$ which is fully consistent with the numerical results. More generally, the above formula shows that $\gamma^*$ is an increasing function of $n$ $D$ and $1/\sigma$.

\begin{figure}[h!]
\center
\includegraphics[width=0.7\textwidth]{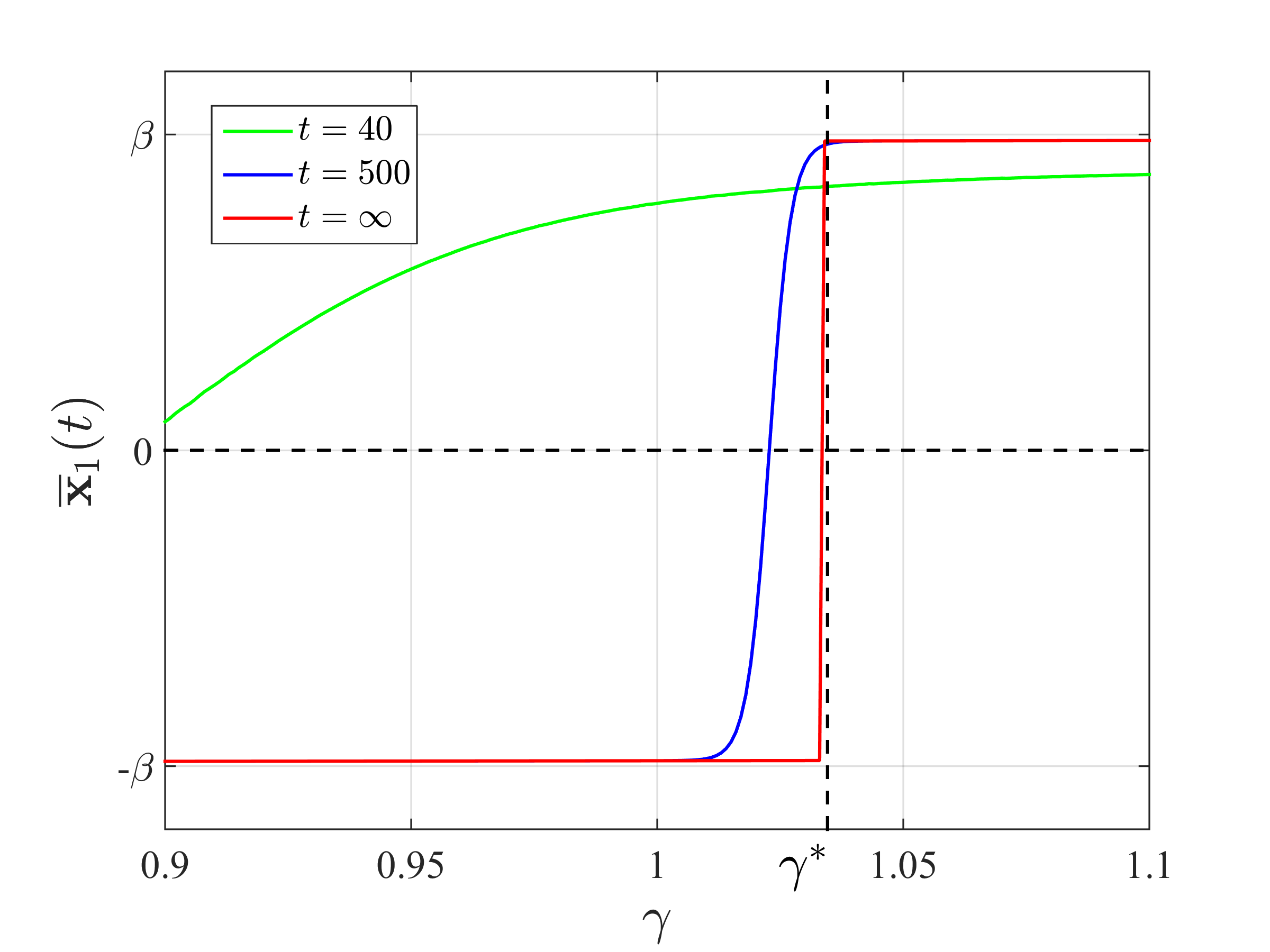}
\caption{{\bf  First component $\xb_1(t)$ of the mean phenotype depending on the asymmetry coefficient~$\gamma$: model ($\mathcal{Q}_b$).}  As in the figures of the main text, we assumed here that the dimension is $n=2$,  $\Oc_b=(\beta,0)$, $\Oc_s=(-\beta,0)$ with $\beta=1/2$. The functions  $b(\x)$ and $s(\x)$ are defined by \eqref{eq:asym}, with $b_1(\x)=b_1(x_1,x_2)=\exp\left[-(x_1-\beta)^2/(2 \, \sigma_{x_1}^2)-x_2^2/(2 \, \sigma_{x_2}^2)\right]$, $\sigma_{x_1}^2=\sigma_{x_2}^2=1/10$, $b_0=0.7$ and $D=1/4000$. The vertical dotted line represents the threshold $\gamma^*$ obtained by solving \eqref{eq:gamma}. We obtained the numerical value of $\xb_1(t)$ at finite times by solving the PDE  ($\mathcal{Q}_b$) with a method of lines, as in the figures of the main text, with an initial condition concentrated at   $\x_0=(0,-0.3)$. To compute the limit value of $\xb_1(t)$ at $t=+\infty$, we solved the eigenvalue problem \eqref{eq:statio2} with Comsol Multiphysics eigenvalue solver.}
\label{fig:gamma}
\end{figure} 	
	
\end{document}